\documentclass[a4paper,10pt, reqno]{amsart}

\usepackage{hyperref}
\hypersetup{
    colorlinks=true,       
    linkcolor=blue,          
    citecolor=blue,        
    filecolor=blue,      
    urlcolor=blue           
}
\numberwithin{equation}{section}
\usepackage{rotating}
\usepackage[all]{xy}
\usepackage{tikz,graphicx}
\usepackage{amssymb}
\usepackage{booktabs}
\usepackage[english]{babel}
\usepackage{multirow}
\usepackage{multicol}
\usepackage{hyperref}
\usepackage{rotating}

\theoremstyle{plain}
\newtheorem{thm}{Theorem}[section]

\newtheorem{prop}[thm]{Proposition}
\newtheorem{cor}[thm]{Corollary}
\newtheorem{rem}[thm]{Remark}
\newtheorem{preg}{Question}
\newtheorem{claim}{Claim}
\theoremstyle{definition}
\newtheorem{defn}[thm]{Definition}

\newcommand{\s}{\vspace{0.3cm}}

\newcommand{\Z}{\mathbb{Z}}

\def\Aut{\operatorname{Aut}}
\def\PGL{\operatorname{PGL}}

\usepackage{enumerate}

\begin{document}
\title[Automorphism groups of smooth plane sextic curves]{The stratification by automorphism groups of smooth plane sextic curves}
\author[E. Badr] {Eslam Badr}
\address{$\bullet$\,\,Eslam Badr}
\address{Mathematics Department,
Faculty of Science, Cairo University, Giza-Egypt}
\email{eslam@sci.cu.edu.eg}
\address{Mathematics and Actuarial Science Department (MACT), American University in Cairo (AUC), New Cairo-Egypt}
\email{eslammath@aucegypt.edu}

\author[F. Bars] {Francesc Bars}
\address{$\bullet$\,\,Francesc Bars}
\address{Departament Matem\`atiques, Edif. C, Universitat Aut\`onoma de Barcelona\\
08193 Bellaterra, Catalonia}
\address{Barcelona Graduate School of Mathematics,
Catalonia} \email{francesc@mat.uab.cat}
\thanks{The authors are supported by PID2020-116542GB-I00, Ministerio de Ciencia y Universidades of Spanish
government. Also, E. Badr is partially supported by the School of Science and
Engineering (SSE) of the American Univeristy in Cairo (AUC)}

%

\keywords{plane curves; automorphism groups}

\subjclass[2020]{14H37, 14H10, 14H45, 14H50}

\maketitle
\begin{abstract}
We obtain the list of automorphism groups for smooth plane sextic curves over an algebraically closed field $K$ of characteristic $p=0$ or $p>21$. Moreover, we assign to each group a \emph{geometrically complete family over $K$} describing its corresponding stratum, that is, a generic defining polynomial equation with parameters such that any curve in the stratum is $K$-isomorphic to a non-singular plane model obtained by specializing the values of those parameters over $K$.
\end{abstract}

\section{Introduction}
Smooth plane curves of degree $d\geq 4$ with non-trivial
automorphism groups are of special interest in different branches of
mathematics. For example, in the algebraic geometry of the \emph{Cremona group} $\operatorname{Bir}(\mathbb{P}^2(\mathbb{C}))$ of the projective complex plane $\mathbb{P}^2(\mathbb{C})$. In particular, to understand the dynamics of its elements as we can consider a Cremona transformation as defining a dynamical system. More concretely, in the study of the finite subgroups of $\operatorname{Bir}(\mathbb{P}^2(\mathbb{C}))$, the set of birational classes of curves of a fixed genus is an important conjugacy invariant that was used to find infinitely many conjugacy classes of
elements of order $2n$ of $\operatorname{Bir}(\mathbb{P}(\mathbb{C}))$ for any integer $n$. See \cite{BlSt, BlaPanVus} for more details.

Concerning smooth plane sextics, there are many reasons to deal with them. For instance, they naturally appear in the theory of
$K3$-surfaces, which in turns has strong connection with
physics. In \cite{Deg1} it was conjectured that the maximal number of lines in a smooth $2$-polarized
$K3$-surface is $144$, with the maximum realized by the double plane $X\rightarrow \mathbb{P}^2$ ramified
over the smooth sextic curve $$X^6+Y^6+Z^6-10(X^3Y^3+Y^3Z^3+X^3Z^3)=0.$$
In \cite{Deg} the conjecture was extended in terms of tritangents to the ramification locus $C\subset\mathbb{P}^2$ (a smooth sextic curve) rather than lines in the surface $X\rightarrow \mathbb{P}^2$. Another reason is that smooth plane sextics are extremely rich from arithmetic geometry point of view. Roughly speaking, they are the first place to construct examples that not every twist $C'$ of a smooth $C$ over a field $k$ is given by smooth plane  model over $k$ even if $C\times\overline{k}$ admits a smooth plane model over $\overline{k}$. We refer the reader to \cite{twists,counter} for the complete details.

The structure of the automorphism groups of smooth curves of genus $g\geq2$ defined over an algebraically closed field $K$ of characteristic $p=0$ or $p>2g+1$ is an old subject of research in algebraic geometry. The most famous universal bound, the \emph{Hurwitz bound}, given by Hurwitz \cite{MR1510753} turns out to be sharp for infinitely many genera. Oikawa \cite{MR0080730} and Arakawa \cite{MR1809907} gave better upper bounds when the automorphism group fixes (not necessarily pointwise) finite subsets of points on the curve. These bounds become very useful in our study of smooth plane curves.

In the case of hyperelliptic curve, the structure of the automorphism group is quite explicit (see \cite{MR897252, MR1223022, MR2280308, MR2035219}). For non-hyperelliptic curves, we still have a lack of knowledge about the structure, except for some special cases. For example, the cases of low genus and also Hurwitz curves, see \cite{MR1796706, henn1976automorphismengruppen, MR839811, MR1072285, MR1068416}. This lack motivates us to do more investigation in this direction, especially for the case of smooth plane curves of degree $d\geq4$.

Now, given a smooth plane curve $C$ of degree $d\geq4$ over $K$, the linear series $g^2_d$ is unique up to $\operatorname{PGL}_3(K)$-conjugation (cf. \cite[Lemma 11.28]{Book}), where $g=\frac{1}{2}(d-1)(d-2)$ is the genus $g$ of $C$. In particular, any isomorphism between two smooth plane curves is linear (cf. \cite{MR529972}). Therefore, we can think about $\operatorname{Aut}(C)$, the automorphism group of $C$, as a finite subgroup of $\operatorname{PGL}_3(K)$. Next, we can associate to $C$ infinitely many smooth plane models of the form $F_C(X,Y,Z)=0$ in $\mathbb{P}^2_{K}$ of degree $d$. Any two such models are $K$-isomorphic through a change of variables $\phi\in\operatorname{PGL}_3(K)$, and their automorphism groups are $\operatorname{PGL}_3(K)$-conjugated.

Next, for a finite non-trivial group $G$, consider the stratum $\mathcal{M}^{\operatorname{Pl}}_{g}(G)$ consisting of the $K$-isomorphism classes of smooth plane curves $C$ of genus $g=\frac{1}{2}(d-1)(d-2)$ such that $\Aut(C)$ contains a subgroup
isomorphic to $G$. Similarly, we write $\widetilde{\mathcal{M}^{\operatorname{Pl}}_{g}}(G)$ when $\Aut(C)$ is itself isomorphic to $G$, in
particular, $\widetilde{\mathcal{M}^{\operatorname{Pl}}_{g}}(G)\subseteq\mathcal{M}^{\operatorname{Pl}}_{g}(G)$.

Concerning these strata, one asks the following two questions:
\begin{preg}\label{1stquestion}
Let $G$ be a finite non-trivial group. What are the values of $d$ such that the corresponding stratum $\widetilde{\mathcal{M}_g^{Pl}}(G)\neq\emptyset$, that is, $\exists$ a smooth plane curve $C$ of degree $d$ over $K$ whose $\operatorname{Aut}(C)$ isomorphic to $G$?
\end{preg}
So far, we do not have a complete answer to the above question except for very special cases. For example, by the work of S. Crass in \cite[p.28]{MR1724156}, we know that $\widetilde{\mathcal{M}_g^{Pl}}(\operatorname{A}_6)\neq\emptyset$ exactly when $d=6$, $d=12$ and $d=30$, where $\operatorname{A}_6$ is the alternating group on six letters. The recent work of Y. Yoshida in \cite{Yoshida1} provides similar results when $G$ is the groups alternating group $\operatorname{A}_5$ or the Klein group $\operatorname{PSL}(2,7)$.

\vspace{0.25cm}
In the line of the works of P. Henn in \cite{henn1976automorphismengruppen} and Komiya-Kuribayashi in \cite{MR555703} for degree 4 curves, and Badr-Bars \cite{MR3508302} for degree $5$ curves, one asks the following question:

\begin{preg}\label{2ndquestion}
Fix an integer $g=\dfrac{1}{2}(d-1)(d-2)\geq3$. How looks like the stratification by automorphism groups of the $K$-isomorphism classes of smooth plane curve $C$ of degree $d$ over $K$? Equivalently, determine the list of finite groups $G$ for which $\widetilde{\mathcal{M}_g^{Pl}}(G)\neq\emptyset$.
\end{preg}

In this paper, we aim to answer Question \ref{2ndquestion} for degree $6$ curves, which are genus $10$ curves. Besides ad-hoc computations to prove or disprove the existence of some cases, the two main ingredients are the works of Badr-Bars \cite{MR3475065} and of T. Harui \cite{Harui}. We also remark that Doi-Idei-Kaneta in \cite{DoiIdei} showed that the maximum order of the automorphism group of smooth plane sextics is $360$, moreover, they proved that the most symmetric smooth plane sexitc is $K$-isomorphic to the Wiman sextic curve
$$
\mathcal{W}_6\,:\,27X^6+9X(Y^5+Z^5)-135X^4YZ-45X^2Y^{2}Z^{2}+10Y^3Z^3=0
$$
whose automorphism group equals the alternating group $\operatorname{A}_6$. They studied the existence of $G$-invariant smooth plane sextics when $G$ is a $p$-group with order dividing $|\operatorname{A}_6|=360$ and $p$ a prime integer. The classification we give in this paper; Theorem \ref{mainresult} is a big generalization of their results. For instance, we prove the existence of infinitely many smooth plane sextic curves that are $\operatorname{Q}_8$-invariant. This violates \cite[Lemma 2.11]{DoiIdei}. The detailed connection of our results with \cite{DoiIdei} is provided just after Theorem \ref{mainresult}; see Corollary \ref{correctDIK}.


\section{Statement of the main result}
\noindent\textbf{Notations.} Throughout the paper, $L_{i,B}$ denotes the generic homogeneous polynomial of degree $i$ in the variables $\{X,Y,Z\}-\{B\}$. Also, a projective linear transformation $A=(a_{i,j})\in\PGL_3(K)$ is sometimes written as
$$[a_{1,1}X+a_{1,2}Y+a_{1,3}Z:a_{2,1}X+a_{2,2}Y+a_{2,3}Z:a_{3,1}X+a_{3,2}Y+a_{3,3}Z].$$
Moreover, we use the formal GAP library notations ``GAP$(n, m)$`` to refer the finite group of order $n$ that appears
in the $m$-th position of the atlas for small finite groups \cite{Gap}. See also \href{https://people.maths.bris.ac.uk/~matyd/GroupNames/index500.html}{GroupNames}.

\vspace{0.25cm}
Regrading Question \ref{2ndquestion} for smooth plane sextic curves, we obtain:
\begin{thm}\label{mainresult}
Let $K$ be an algebraically closed field of characteristic $p=0$ or $p>21$. The following table provides the full list of automorphism groups of smooth plane sextics over $K$, along with a geometrically complete family over $K$ for each stratum. In particular, $\widetilde{\mathcal{M}_g^{Pl}}(G)\neq\emptyset$ if and only if $G$ is one of the groups that appear in Table \ref{table:fullAuto.} below.

We do not give the algebraic restrictions on the parameters such that each family is smooth and does not admit larger automorphism group for sake of simplicity.

\vspace{0.1cm}
\begin{table}[th]
  \renewcommand{\arraystretch}{1.3}
  \caption{Automorphism Groups and Defining Equations}\label{table:fullAuto.}
  \vspace{4mm} 
  \centering
\scriptsize
\begin{tabular}{|c|c|c|c|}
  \hline
\textit{GAP ID} & $\Aut(C)$ &\textit{Generators} & $F(X,Y,Z)$ \\\hline\hline
\multirow{2}{*}{(360,118)} & \multirow{2}{*}{$\operatorname{A}_6$}& $R^{-1}T_iR$ for $i=1,2,3,4$& \multirow{2}{*}{$27X^6+9X(Y^5+Z^5)-135X^4YZ-45X^2Y^{2}Z^{2}+10Y^3Z^3$} \\
  &                                                      & See Appendix \ref{appB} &  \\\hline
\multirow{2}{*}{(216,92)} & \multirow{2}{*}{$(\Z/6\Z)^2\rtimes\operatorname{S}_3$}& $\operatorname{diag}(\zeta_{6},1,1),\,\operatorname{diag}(1,\zeta_{6},1),$& \multirow{2}{*}{$X^6+Y^6+Z^6$} \\
  &                                                      & $[X:Z:Y],\,[Y:Z:X]$ &  \\\hline
\multirow{3}{*}{(216,153)} & \multirow{3}{*}{$\operatorname{Hess}_{216}$} & $S=\operatorname{diag}(1,\zeta_3,\zeta_3^{-1}),\,U=\operatorname{diag}(1,1,\zeta_{3}),$& \multirow{3}{*}{$X^6+Y^6+Z^6-10(X^3Y^3+Y^3Z^3+Z^3X^3)$} \\
  &                                                      & $T=[Y:Z:X],$ &  \\
  &                                                      & $V=\left(\begin{array}{ccc}
1&1&1\\
1&\zeta_3&\zeta_3^{-1}\\
1&\zeta_3^{-1}&\zeta_3\\
\end{array}
\right)$ &  \\\hline
\multirow{3}{*}{(168,42)} & \multirow{3}{*}{$\operatorname{PSL}(2,7)$} & $\operatorname{diag}(1,\zeta_{7},\zeta_7^3),\,[Y:Z:X],$& \multirow{3}{*}{$X^5Y+Y^5Z+XZ^5-5X^2Y^2Z^2$} \\
  &                                                      & $\left(
  \begin{array}{ccc}
    \zeta_7-\zeta_7^6 & \zeta_7^2-\zeta_7^5 & \zeta_7^4-\zeta_7^3 \\
    \zeta_7^2-\zeta_7^5 & \zeta_7^4-\zeta_7^3 & \zeta_7-\zeta_7^6 \\
    \zeta_7^4-\zeta_7^3 & \zeta_7-\zeta_7^6 & \zeta_7^2-\zeta_7^5 \\
  \end{array}
\right)$ &  \\\hline
\multirow{3}{*}{(144,122)} & 
\multirow{3}{*}{$\Z/3\Z\times\operatorname{GL}_2(\mathbb{F}_3)$} & $\operatorname{diag}(1,\zeta_{24},\zeta_{24}^{19}),\,[X:Z:Y],$ & \multirow{3}{*}{$X^6+Y^5Z+YZ^5$} \\
  &                                                            & $[X:c(\zeta_{8}^{5}Z-Y):\zeta_{8}c(\zeta_{8}Z-Y)]$ &  \\
  &                                                            & $c:=(1+i)/2$ &  \\\hline
\multirow{2}{*}{(72,43)} & \multirow{2}{*}{$\Z/3\Z\rtimes\operatorname{S}_4$} & $\operatorname{diag}(1,\zeta_6,\zeta_6^2),\,\operatorname{diag}(1,1,-1),$ & $X^6+Y^6+Z^6+\beta_{2,2}X^2Y^2Z^2$ \\
 &  & $[Z:Y:X],\,[Y:Z:X]$ & $\beta_{2,2}\neq0$  \\\hline
(63,3) &   $\Z/21\Z\rtimes\Z/3\Z$& $\operatorname{diag}(1,\zeta_{21},\zeta_{21}^{17}),\,[Z:X:Y],$ & $X^5Y+Y^5 Z+XZ^5$  \\\hline
\multirow{3}{*}{(60,5)} & \multirow{3}{*}{$\operatorname{A}_5$} & $\operatorname{diag}(1,\zeta_5,\zeta_5^{-1}),\,[X:Z:Y],$ & $32X^6+\gamma^5X(Y^5+Z^5)+8(12-\gamma^5)X^4YZ+$ \\
 &  & $\left(
 \begin{array}{ccc}
 1 & 1 & 1 \\
 2 & (-1+\sqrt{5})/2 & (-1-\sqrt{5})/2 \\
 2 & (-1-\sqrt{5})/2 & (-1+\sqrt{5})/2 \\
 \end{array}\right)$ & $+2(48+\gamma^5)X^2Y^2Z^2+(32-\gamma^5)Y^3Z^3$  \\\hline
 \multirow{2}{*}{(54,5)} & \multirow{2}{*}{$(\Z/3\Z)^2\rtimes\Z/6\Z$} & $\operatorname{diag}(1,\zeta_3,1),\,\operatorname{diag}(1,1,\zeta_3),[X:Z:Y]$ & $X^6+Y^6+Z^6+\beta_{3,3}\left(X^3Y^3+Y^3Z^3+X^3Z^3\right)$ \\
 &  & $[Z:X:Y]$ & $\beta_{3,3}\neq0,-10$ \\\hline
(36,12) & $\Z/6\Z\times\operatorname{S}_3$ & $\operatorname{diag}(\zeta_6,1,1),\,\operatorname{diag}(1,\zeta_{3},1),[X:Z:Y]$ & $X^6+Y^6+Z^6+\beta_{0,3}Y^3Z^3,\,\beta_{0,3}\neq0$ \\\hline
   \end{tabular}
\end{table}

\begin{center}
\begin{table}[!th]
  \renewcommand{\arraystretch}{1.3}
  \vspace{4mm} 
  \centering
\scriptsize
\begin{tabular}{|c|c|c|c|}
  \hline
\multirow{3}{*}{(36,9)} & \multirow{3}{*}{$\operatorname{Hess}_{36}$} & \multirow{3}{*}{$S,\,T,\,V,\,UVU^{-1}$} & $X^6+Y^6+Z^6+\beta_{4,1}XYZ(X^3+Y^3+Z^3)$ \\
 &  &  & $+3\beta_{4,1}X^2Y^2Z^2-2(\beta_{4,1}+5)(X^3Y^3+Y^3Z^3+X^3Z^3)$  \\
  &  &  & $\beta_{4,1}\neq0$  \\\hline
(30,4) & $\Z/30\Z$ & $\operatorname{diag}(1,\zeta_{30}^5,\zeta_{30}^6)$& $X^6+Y^6+XZ^5$ \\\hline
(25,1) & $\Z/25\Z$ & $\operatorname{diag}(1,\zeta_{25},\zeta_{25}^{20})$ & $X^6+Y^5 Z+XZ^5$ \\\hline
 \multirow{2}{*}{(24,12)} & \multirow{2}{*}{$\operatorname{S}_4$} & $\operatorname{diag}(1,-1,1),\,\operatorname{diag}(1,1,-1),$ & $X^6+Y^6+Z^6+\beta_{2,2}X^2Y^2Z^2$ \\
 &  & $[X:Z:Y],[Y:Z:X]$ & $+\beta_{2,4}(X^2Y^4+Y^2Z^4+X^4Z^2+X^2Z^4+X^4Y^2+Y^4Z^2)$ \\\hline
(21,1) & $\Z/7\Z\rtimes\Z/3\Z$ & $\operatorname{diag}(1,\zeta_{7},\zeta_7^3),\,[Y:Z:X]$ & $X^5Y+Y^5Z+XZ^5+\beta_{4,2}X^2Y^2Z^2,\,\beta_{4,2}\neq-5,0$\\\hline
 \multirow{2}{*}{(18,3)} & \multirow{2}{*}{$\Z/3\Z\times\operatorname{S}_3$} & \multirow{2}{*}{$\operatorname{diag}(1,\zeta_3,1),\,\operatorname{diag}(1,1,\zeta_3),[X:Z:Y]$} & $X^6+Y^6+Z^6+\beta_{0,3}Y^3Z^3+\beta_{3,3}X^3\left(Y^3+Z^3\right)$ \\
 &  &  & $\beta_{0,3}\neq\beta_{3,3}$ \\\hline
 \multirow{2}{*}{(18,4)}  &  \multirow{2}{*}{$\Z/3\Z\rtimes\operatorname{S}_3$} & \multirow{2}{*}{$S,\,T,V^2$} & $X^6+Y^6+Z^6+\beta_{4,1}XYZ(X^3+Y^3+Z^3)$  \\
 &  &  & $+\beta_{3,3}(X^3Y^3+Y^3Z^3+X^3Z^3)+\beta_{2,2}X^2Y^2Z^2$  \\\hline

(16,8) & $\operatorname{SD}_{16}$ & $\operatorname{diag}(1,\zeta_{8},\zeta_{8}^3),\,[X:Z:Y]$ & $X^6+Y^5Z+YZ^5+\beta_{4,2}X^2Y^2Z^2,\,\beta_{4,2}\neq0$ \\\hline
 (15,1) & $\Z/15\Z$ & $\operatorname{diag}(1,\zeta_{15}^5,\zeta_{15}^{6})$ & $X^6+Y^6+XZ^5+\beta_{3,3}X^3Y^3,\,\beta_{3,3}\neq0,\pm2$ \\\hline
 (12,5)  &  $\Z/2\Z\times\Z/6\Z$ & $\operatorname{diag}(\zeta_6,1,1),\,\operatorname{diag}(1,1,-1)$ & $X^6+Y^6+Z^6+\beta_{2,4}Y^2Z^4+\beta_{4,2}Y^4Z^2,\,\beta_{2,4}\neq\beta_{4,2}$  \\\hline
 (12,4)  &  $\operatorname{D}_{12}$ & $\operatorname{diag}(1,\zeta_6,\zeta_6^2),\,[Z:Y:X]$ & $X^6+Y^6+Z^6+\beta_{3,0}X^3Z^3+\beta_{2,2}X^2Y^2Z^2+\beta_{1,4}XY^4Z$  \\
 &  & & $(\beta_{3,0},\beta_{1,4})\neq(0,0)$  \\\hline
 (12,3)  &  $\operatorname{A}_{4}$ & $\operatorname{diag}(1,1,-1),\,\operatorname{diag}(1,-1,1)$ & $X^6+Y^6+Z^6+\beta_{4,2}(X^4Y^2+Y^4Z^2+X^2Z^4)$   \\
 &  & $[Y:Z:X]$ & $+\beta_{2,4}(X^2Y^4+Y^2Z^4+X^4Z^2)+\beta_{2,2}X^2Y^2Z^2$  \\\hline
(12,3)  &  $\operatorname{A}_{4}$ & $\operatorname{diag}(1,1,-1),\,\operatorname{diag}(1,-1,1)$ & $X^6+\zeta_3^{-1}Y^6+\zeta_3Z^6+\beta_{4,2}(\zeta_3^{-1}X^4Y^2+\zeta_3Y^4Z^2+X^2Z^4)$   \\
 &  & $[Y:Z:X]$ & $+\beta_{2,4}(\zeta_3^{-1}X^2Y^4+\zeta_3Y^2Z^4+X^4Z^2)$  \\\hline
 (10,1)  &  $\operatorname{D}_{10}$ & $\operatorname{diag}(1,\zeta_5,\zeta_5^{-1}),\,[X:Z:Y]$ & $X^6+XZ^5+XY^5+\beta_{4,1}X^4YZ+$\\
 &  & & $+\beta_{2,2}X^2Y^{2}Z^{2}+\beta_{0,3}Y^3Z^3$ \\\hline
 (10,2)  &  $\Z/10\Z$&$\operatorname{diag}(1,\zeta_{10}^5,\zeta_{10}^{6})$ & $X^6+Y^6+XZ^5+\beta_{4,2}X^4Y^2+\beta_{2,4}X^2Y^4$  \\
 &  & & $\beta_{4,2},\beta_{2,4}\neq0$  \\\hline
 (9,2)  &  $\varrho_{h}\left((\Z/3\Z)^2\right)$ & $\operatorname{diag}(1,\zeta_{3},1),\operatorname{diag}(1,1,\zeta_{3})$ & $X^6+Y^6+Z^6+Z^3\left(\beta_{3,0}X^3+\beta_{0,3}Y^3\right)+\,\beta_{3,3}X^3Y^3$  \\\hline
 \multirow{2}{*}{(9,2)}  &  \multirow{2}{*}{$\varrho_{nh}\left((\Z/3\Z)^2\right)$} & \multirow{2}{*}{$S,\,T$} & $X^5Y+Y^5Z+XZ^5+\beta_{2,4}(X^2Y^4+Y^2Z^4+X^4Z^2)$  \\
 &  & & $+\beta_{1,3}(XY^3Z^2+X^2YZ^3+X^3Y^2Z)$  \\\hline
 (8,4)  &  $\operatorname{Q}_8$ & $\operatorname{diag}(1,\zeta_{4},\zeta_4^{-1}),[X:\zeta_8Z:-\zeta_8^{-1}Y]$ & $X^6+Y^5Z+YZ^5+\beta_{2,0}X^2(Z^4-Y^4)+\beta_{2,2}X^2Y^2Z^2$ \\\hline
 (8,3)  &  $\operatorname{D}_8$ & $\operatorname{diag}(1,\zeta_{4},\zeta_4^{-1}),[X:Z:Y]$ & $X^6+Y^5Z+YZ^5+\beta_{0,3}Y^3Z^3+\beta_{4,1}X^4YZ$  \\
 &  & & $+X^2\left(\beta_{2,0}Z^4+\beta_{2,2}Y^2Z^2+\beta_{2,0}Y^4\right)$  \\\hline
 \multirow{3}{*}{(6,1)}  &  \multirow{3}{*}{$\operatorname{S}_3$} & \multirow{3}{*}{$\operatorname{diag}(1,\zeta_{3},\zeta_3^{-1}),[X:Z:Y]$} & $X^6+Y^6+Z^6+\beta_{4,1}X^4YZ+\beta_{3,3}X^3(Y^3+Z^3)$  \\
 &  & & $+\beta_{2,2}X^2Y^2Z^2+\beta_{1,2}XYZ(Y^3+Z^3)+\beta_{0,3}Y^3Z^3$  \\
 &  & & $\beta_{4,1}\neq\beta_{1,2}$ or $\beta_{3,3}\neq\beta_{0,3}$  \\\hline
(6,2)  &  $\varrho_{(0,1)}\left(\Z/6\Z\right)$ & $\operatorname{diag}(\zeta_6,1,1)$ & $X^6+L_{6,X}$  \\\hline
  (6,2)  &  $\varrho_{(1,3)}\left(\Z/6\Z\right)$ & $\operatorname{diag}(1,\zeta_6,-1)$ & $X^6+Y^6+Z^6+\beta_{2,0}X^4Z^2+\beta_{0,3}Y^3Z^3+ $  \\
    &   &  & $+X^2\left(\beta_{4,0}Z^4+\beta_{4,3}Y^3Z\right)$  \\\hline
 \multirow{2}{*}{(5,1)}  &  $\varrho_{(1,2)}\left(\Z/5\Z\right)$ & $\operatorname{diag}(1,\zeta_5,\zeta_5^2)$ & $X^6+XZ^5+XY^5+\beta_{3,1}X^3YZ^2$  \\
   &  &  & $+\beta_{2,3}X^2Y^3Z+\beta_{0,2}Y^2Z^4$  \\\hline
 (5,1)  &  $\varrho_{(0,1)}\left(\Z/5\Z\right)$ & $\operatorname{diag}(1,1,\zeta_{5})$ & $Z^5Y+L_{6,Z}$  \\\hline
%
 (4,1)  &  $\Z/4\Z$ & $\operatorname{diag}(1,\zeta_{4},\zeta_4^{-1})$ & $X^6+Y^5Z+YZ^5+\beta_{0,3}Y^3Z^3+\beta_{4,1}X^4YZ$  \\
 &  & & $+X^2\left(\beta_{2,0}Z^4+\beta_{2,2}Y^2Z^2+\beta_{2,4}Y^4\right)$  \\\hline
 (4,2)  &  $\left(\Z/2\Z\right)^2$ & $\operatorname{diag}(1,1,-1),\,\operatorname{diag}(1,-1,1)$ & $Z^6+Z^4L_{2,Z}+Z^2L_{4,Z}+L_{6,Z}$  \\
 &  & & $L_{i,Z}\in K[X^2,Y^2]$  \\\hline
(3,1)  &  $\varrho_{(0,1)}\left(\Z/3\Z\right)$ & $\operatorname{diag}(1,1,\zeta_3)$ & $Z^6+Z^3L_{3,Z}+L_{6,Z}$  \\\hline
 (3,1)  &  $\varrho_{(1,2)}\left(\Z/3\Z\right)$ & $\operatorname{diag}(1,\zeta_3,\zeta_3^{-1})$ & $X^5Y+Y^5Z+XZ^5+\beta_{2,4}X^2Y^4+\beta_{0,2}Y^2Z^4+\beta_{4,0}X^4Z^2$  \\
    &   &  & $+XYZ(\beta_{3,2}X^2Y+\beta_{1,3}Y^2Z+\beta_{2,1}XZ^2)$  \\\hline
 (3,1)  &  $\varrho_{(1,2)}\left(\Z/3\Z\right)$ & $\operatorname{diag}(1,\zeta_3,\zeta_3^{-1})$ & $X^6+Y^6+Z^6+XYZ\left(\beta_{4,1}X^3+\beta_{1,4}Y^3+\beta_{1,2}Z^3\right)$  \\
    &   &  & $+\beta_{2,2}X^2Y^2Z^2+\beta_{3,3}X^3Y^3+\beta_{3,0}X^3Z^3+\beta_{0,3}Y^3Z^3$  \\\hline

 (2,1)  &  $\Z/2\Z$ & $\operatorname{diag}(1,1,-1)$ & $Z^6+Z^4L_{2,Z}+Z^2L_{4,Z}+L_{6,Z}$  \\\hline
   \end{tabular}
\end{table}
\end{center}
Secondly, the following diagram shows how looks like the stratification of smooth plane sextic curves by automorphism groups.
\scriptsize
\begin{figure}
\rotatebox{90}{
$$
\xymatrixrowsep{1.2cm}
\xymatrixcolsep{0.12cm}
\xymatrix{
\Z/25\Z & \Z/30\Z &(\Z/6\Z)^2\rtimes\operatorname{S}_3 &  & \Z/3\Z\times\operatorname{GL}_2(\mathbb{F}_3)  &  \Z/21\Z\rtimes\Z/3\Z &  \operatorname{PSL}(2,7) & &\operatorname{Hess}_{216} &  & & & \operatorname{A}_6 \\
\Z/15\Z\ar[ru] & & (\Z/3\Z)^2\rtimes\Z/6\Z\ar@/^5.5pc/[rrrrrru]\ar[u] & \Z/3\Z\times\operatorname{D}_8\ar[lu]\ar[ru]  & \operatorname{SD}_{16}\ar[u]& \Z/6\Z\times\operatorname{S}_3\ar[lllu]  & \Z/7\Z\rtimes\Z/3\Z\ar[lu]\ar[u] & & \operatorname{Hess}_{36}\ar[u]&\Z/3\Z\rtimes\operatorname{S}_4\ar@/^-11.9pc/[lllllllu]  & & &\operatorname{A}_5\ar[u]   \\
\Z/10\Z\ar@/^-1.5pc/[ruu] & &   &  \Z/2\Z\times\Z/6\Z\ar[u]   & \operatorname{Q}_8\ar[u]\ar@/^-4.9pc/[rrrruu]       &\Z/3\Z\times\operatorname{S}_3\ar[u]\ar[lllu] & & & &\varrho_{nh}\left((\Z/3\Z)^2\right)
\ar[rrruu] &  & \operatorname{S}_4\ar[llu]\ar@/^-9.9pc/[llllluu]\ar[ruu] & \\
 & & \varrho_{(1,2)}\left(\Z/5\Z\right)  &     &        &\varrho_{h}\left((\Z/3\Z)^2\right)
  \ar[u]\ar[rrruuu] & \operatorname{D}_{12}\ar[luu]\ar[rrruu] & \Z/3\Z\rtimes\operatorname{S}_3\ar@/^-1.2pc/[ruu]\ar[rruu]\ar@/^5.9pc/[llllluu] & & &  &\operatorname{D}_{10}\ar[ruu] & \operatorname{A}_4\ar[lu]\ar[uu]\\
  & &   &     & \varrho_{(1,3)}\left(\Z/6\Z\right)\ar[luu]\ar[ruu]\ar@/^-2.5pc/[rrrruuuu] 
  & & &
&\operatorname{S}_3\ar[lu]\ar[llu]\ar[llluu] &\operatorname{D}_8\ar[rruu]\ar@/^5.9pc/[lllllluuu] &  & \\
 &  \varrho_{(0,1)}\left(\Z/5\Z\right)\ar[luuu]\ar@/^2.8pc/[luuuuu]\ar@/^-2.8pc/[luuuu]  &  \varrho_{(0,1)}\left(\Z/6\Z\right) 
 \ar@/^0.5pc/[ruuu]\ar@/^0.5pc/[luuuuu]    & &  & \Z/4\Z\ar@/^-1.4pc/[rrrru]\ar[luuu]\ar[rrruuuu]
   &                                 &            &
  &
  & \varrho_{(1,2)}\left(\Z/3\Z\right) 
\ar[llu]\ar@/^2.4pc/[llllluu]\ar[rruu]\ar[luuu]\ar@/^-3.2pc/[lllluuuu]\ar@/^8.7pc/[lllllluuuuu]   \\
 &    &  & &\varrho_{(0,1)}\left(\Z/3\Z\right) 
\ar[uu]\ar[llu]\ar[ruuu]\ar@/^-2.9pc/[lllluuuuu] &   &  (\Z/2\Z)^2\ar[rrruu]\ar@/^-2.8pc/[rrrrrruuu]\ar[uuu]\ar@/^3.8pc/[llluuuu]\ar@/^7.4pc/[lluuuuu]                               &            &  &  & \\
 &    &   & &  &  &                    \Z/2\Z\ar[u]\ar[luu]\ar[lluuu]\ar@/^1.9pc/[lllluu]\ar@/^1.5pc/[lllllluuuuu]\ar@/^-1.5pc/[rruuu]\ar@/^-1.9pc/[rrrrruuuu]             &            &  &  & \\
}
$$
}
\label{stratdiagram}
\end{figure}
\normalsize
\end{thm}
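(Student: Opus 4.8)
The plan is to combine the structural classification of finite subgroups of $\operatorname{PGL}_3(K)$ that can occur as automorphism groups of smooth plane curves with an explicit, group-by-group invariant-theoretic computation of the defining equations. I would begin by invoking Harui's theorem \cite{Harui} together with \cite{MR3475065}: for a smooth plane curve of degree $d\geq4$, the group $\operatorname{Aut}(C)\leq\operatorname{PGL}_3(K)$ is, up to $\operatorname{PGL}_3(K)$-conjugacy, of one of three shapes---it fixes a point $P$ and a line not through $P$ (the \emph{imprimitive} cases, governed by a normal abelian subgroup of diagonal matrices together with monomial symmetries); it is contained in the Hessian group $\operatorname{Hess}_{216}$; or it is one of the finite primitive groups $\operatorname{A}_5$, $\operatorname{A}_6$, $\operatorname{PSL}(2,7)$, or the full Fermat-symmetry group $(\Z/6\Z)^2\rtimes\operatorname{S}_3$. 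Specializing to $d=6$, $g=10$ in the order and divisibility constraints, and in the Oikawa--Arakawa bounds recalled in the introduction, cuts each shape down to a finite list of candidate groups; this finite list is the starting point for Table \ref{table:fullAuto.}.

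For each candidate $G$ I would, after conjugating $G$ into a normalized form realized by the generators displayed in the ``Generators'' column (diagonal matrices together with fixed permutation or monomial generators), compute the space $V_G$ of $G$-invariant ternary sextics inside the $28$-dimensional space $K[X,Y,Z]_6$. This is a representation-theoretic task: decompose the degree-$6$ monomials under the $G$-action and extract the trivial isotypic component; a basis of $V_G$ then yields precisely the generic defining polynomial $F(X,Y,Z)$ with the parameters $\beta_{i,j}$ as coordinates. For the primitive strata this reproduces the classical Wiman, Klein and Hessian invariants; for the imprimitive strata the computation reduces to monomial bookkeeping once the diagonal weights are fixed.

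The substantive content is the pair of assertions encoded in the last column: that a \emph{generic} member of $V_G$ is smooth and has automorphism group \emph{exactly} $G$. Smoothness is an open dense condition, so it suffices to exhibit one smooth member; the remaining work is the exactness of $\operatorname{Aut}$. Here I would exploit the subgroup lattice: whenever $G\subsetneq G'$ are two candidates realized compatibly, the $G'$-invariant sextics cut out a proper linear subspace $V_{G'}\subsetneq V_G$, and any ``larger'' automorphism group forces the parameters of $F$ into one of these proper subspaces. Intersecting the finitely many such conditions produces exactly the open inequalities recorded in the table ($\beta_{2,2}\neq0$; $\beta_{3,3}\neq0,-10$; $\beta_{4,1}\neq\beta_{1,2}$ or $\beta_{3,3}\neq\beta_{0,3}$; and so on), and simultaneously yields the Hasse diagram of Figure \ref{stratdiagram}: an arrow $G\to G'$ is drawn precisely when $G$ embeds in $G'$ compatibly with the normalized actions, equivalently when $V_{G'}$ is obtained from $V_G$ by specializing parameters. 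The cases demanding genuine case-analysis rather than bookkeeping are the borderline existence questions---most notably the $\operatorname{Q}_8$ stratum, where I must exhibit the smooth family $X^6+Y^5Z+YZ^5+\beta_{2,0}X^2(Z^4-Y^4)+\beta_{2,2}X^2Y^2Z^2$ and verify it admits no larger group, thereby contradicting \cite[Lemma 2.11]{DoiIdei}---and the several entries sharing a GAP ID (the two $\operatorname{A}_4$'s, the two realizations of $(\Z/3\Z)^2$, the three $\Z/3\Z$-strata, the two $\Z/6\Z$- and $\Z/5\Z$-strata), which are abstractly isomorphic yet $\operatorname{PGL}_3$-nonconjugate and must be separated by their distinct invariant spaces $V_G$.

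I expect the main obstacle to be precisely this exactness-plus-completeness bookkeeping: proving that the list is exhaustive (no conjugacy class of realizable $G$ is omitted) and that each family's generic automorphism group is no larger than advertised. Ruling out extra automorphisms is delicate because it does not suffice to treat the finitely many overgroups abstractly; one must verify that every element of $\operatorname{PGL}_3(K)$ stabilizing the curve but lying outside $G$ would force a parameter coincidence, which for the low-symmetry strata (the $\operatorname{S}_3$ case, and the various $\Z/3\Z$ and $\Z/2\Z$ strata with generic $L_{i,B}$-tails) requires a careful analysis of how the normalizer of $G$ in $\operatorname{PGL}_3(K)$ acts on the residual moduli. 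These are the ad-hoc computations alluded to in the statement, and they are where the classification is genuinely earned.
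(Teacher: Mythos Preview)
Your strategy---enumerate candidate subgroups $G\leq\operatorname{PGL}_3(K)$ from Harui's structure theorem, compute the $G$-invariant sextics $V_G$, then verify generic smoothness and exactness of $\operatorname{Aut}$---is a legitimate route, but it is organized differently from the paper. The paper proceeds \emph{bottom-up from the maximal cyclic subgroup}: Proposition~\ref{maximalordercyclic} first enumerates the twenty possible ``Types $m,(a,b)$'' (conjugacy classes of maximal-order cyclic actions on a smooth sextic, drawn from \cite{MR3475065}), and then Sections~4 through~10 take each Type in turn and determine what $\operatorname{Aut}(C)$ can be by running Harui's case-split under the standing hypothesis that $\sigma=\operatorname{diag}(1,\zeta_m^a,\zeta_m^b)$ has maximal order. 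This hypothesis is the engine of the paper: it instantly excludes every overgroup containing an element of order $>m$, which prunes the Harui analysis drastically (for Type $8,(1,3)$, for instance, all primitive groups and both Fermat/Klein descendants are eliminated in two sentences). Your group-first approach reaches the same destination but forfeits this pruning, so the exactness checks are correspondingly heavier; it also requires you to enumerate $\operatorname{PGL}_3$-conjugacy classes of each abstract $G$ by hand, whereas the Type parametrization does this automatically.

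There is also a concrete error in your summary of Harui's theorem that would cause cases to be lost or mis-filed. The group $(\Z/6\Z)^2\rtimes\operatorname{S}_3$ is \emph{not} primitive: it fixes the reference triangle and arises in Harui's case~(3), descendants of the Fermat curve $\mathcal{F}_d$, not in case~(5). You also omit case~(4), descendants of the Klein curve $\mathcal{K}_d$, which is where $\Z/21\Z\rtimes\Z/3\Z$, $\Z/7\Z\rtimes\Z/3\Z$, and $\operatorname{PSL}(2,7)$ are detected in the paper (Proposition~\ref{713} and Corollary~\ref{verylarge}). Finally, ``fixes a point $P$ and a line not through $P$'' (Harui's cases~(1)--(2)) is not the same thing as ``has a normal diagonal subgroup with monomial symmetries'': the latter is the triangle-fixing situation of cases~(3)--(4), and the paper's analyses of Types $6,(1,3)$, $6,(1,2)$, $4,(1,3)$ and $3,(1,2)$ turn precisely on distinguishing ``$\operatorname{Aut}(C)$ fixes a point off $C$'' from ``$C$ is a descendant of $\mathcal{F}_6$,'' with genuinely different arguments in the two sub-cases (see Claims~\ref{claim1,613}--\ref{claim2,613} and Claims~\ref{claim2,612}--\ref{claim3,612}). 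With the five-case split restored correctly your plan becomes workable, but as written it would miss the Klein-descendant strata and conflate sub-cases that the paper must keep separate.
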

\newpage
\begin{cor}\label{correctDIK} We correlate the following observations with the results in \cite{DoiIdei} regarding subgroups of automorphisms for smooth plane sextics of orders $5,\,8,\,9$ and $27$ respectively.
\begin{enumerate}[1.]
\item We have $\mathcal{M}^{\operatorname{Pl}}_{10}\left(\varrho_{(0,1)}\left(\Z/5\Z\right)\right)\cap\mathcal{M}^{\operatorname{Pl}}_{10}(\operatorname{A}_6)=\emptyset$, which agrees with \cite[Proposition 2.13]{DoiIdei}. Also, $\mathcal{M}^{\operatorname{Pl}}_{10}\left(\varrho_{(1,2)}\left(\Z/5\Z\right)\right)$ has two disjoint ES-irreducible components namely, $\widetilde{\mathcal{M}^{\operatorname{Pl}}}_{10}\left(\varrho_{(1,2)}\left(\Z/5\Z\right)\right)$ and $\mathcal{M}^{\operatorname{Pl}}_{10}\left(\operatorname{D}_{10}\right)=\widetilde{\mathcal{M}^{\operatorname{Pl}}}_{10}\left(\operatorname{D}_{10}\right)\,\sqcup\,\widetilde{\mathcal{M}^{\operatorname{Pl}}}_{10}\left(\operatorname{A}_{6}\right)$, which agrees with \cite[Lemma 2.14]{DoiIdei}.
\item We have $\mathcal{M}^{\operatorname{Pl}}_{10}(\operatorname{Q}_8)\neq\emptyset$, which opposes \cite[Lemma 2.11]{DoiIdei}. More precisely, the stratum has dimension two and it decomposes as
    $$
    \mathcal{M}^{\operatorname{Pl}}_{10}(\operatorname{Q}_8)=\widetilde{\mathcal{M}^{\operatorname{Pl}}}_{10}\left(\operatorname{Q}_8\right)\,\sqcup\,\widetilde{\mathcal{M}^{\operatorname{Pl}}}_{10}\left(\operatorname{SD}_{16}\right)\,\sqcup\,\widetilde{\mathcal{M}^{\operatorname{Pl}}}_{10}\left(\Z/3\Z\times\operatorname{GL}_{2}(\mathbb{F}_3)\right)\,\sqcup\,\widetilde{\mathcal{M}^{\operatorname{Pl}}}_{10}\left(\operatorname{Hess}_{216}\right).
    $$
  However, the main result of \cite{DoiIdei} confirming that the Wiman sextic is the most symmetric smooth plane sextic remains correct.

\item We have $\mathcal{M}^{\operatorname{Pl}}_{10}(G)=\emptyset$ when $G=\Z/2\Z\times\Z/4\Z$ or $(\Z/2\Z)^3$, which agrees with \cite[Lemmas 2.8 and 2.9]{DoiIdei}. Also, $\mathcal{M}^{\operatorname{Pl}}_{10}(G)\neq\emptyset$ for $G=\Z/8\Z$ and $\operatorname{D}_8,$ and each of them has a
    single ES-irreducible component, moreover,
$$\mathcal{M}^{\operatorname{Pl}}_{10}(\Z/8\Z)= \mathcal{M}^{\operatorname{Pl}}_{10}(\operatorname{SD}_{16})= \widetilde{\mathcal{M}^{\operatorname{Pl}}_{10}}(\operatorname{SD}_{16})\,\sqcup\,\widetilde{\mathcal{M}^{\operatorname{Pl}}_{10}}(\Z/3\Z\times\operatorname{GL}_2(\mathbb{F}_3)),
$$
and
$$  \mathcal{M}^{\operatorname{Pl}}_{10}(\operatorname{D}_8)= \widetilde{\mathcal{M}^{\operatorname{Pl}}_{10}}(\operatorname{D}_{8})\,\sqcup\,\widetilde{\mathcal{M}^{\operatorname{Pl}}_{10}}(\operatorname{S}_4)\,\sqcup\, \widetilde{\mathcal{M}^{\operatorname{Pl}}_{10}}(\operatorname{A}_6)\,\sqcup\,\widetilde{\mathcal{M}^{\operatorname{Pl}}_{10}}(\Z/3\Z\rtimes\operatorname{S}_4)\,\sqcup\,\widetilde{\mathcal{M}^{\operatorname{Pl}}_{10}}\left((\Z/6\Z)^2\rtimes\operatorname{S}_3\right).$$

\item The strata $\mathcal{M}^{\operatorname{Pl}}_{10}(G)=\emptyset$ for $G=\Z/9\Z$ and $(\Z/3\Z)^3$, which agrees with \cite[Lemmas 1.4 and 1.5]{DoiIdei}.
\item For the Heisenberg group $\operatorname{He}_3$ of order $27$, generated by $S,\,U,\,T$, it is the case that $\mathcal{M}^{\operatorname{Pl}}_{10}(\operatorname{He}_3)\neq\emptyset$ and also that
$\mathcal{M}^{\operatorname{Pl}}_{10}(\operatorname{He}_3)\,\cap\,\mathcal{M}^{\operatorname{Pl}}_{10}(\operatorname{A}_6)=\emptyset$, which agrees with \cite[Lemmas 1.8 and 1.9]{DoiIdei}. Moreover, we improve such a result by showing that
\begin{eqnarray*}
\mathcal{M}^{\operatorname{Pl}}_{10}(\operatorname{He}_3)&=&\mathcal{M}^{\operatorname{Pl}}_{10}\left((\Z/3\Z)^2\rtimes\Z/6\Z\right)\\
&=&\widetilde{\mathcal{M}^{\operatorname{Pl}}_{10}}\left((\Z/3\Z)^2\rtimes\Z/6\Z\right)\,\sqcup\,\widetilde{\mathcal{M}^{\operatorname{Pl}}_{10}}\left((\Z/6\Z)^2\rtimes\operatorname{S}_3\right)\,\sqcup\,\widetilde{\mathcal{M}^{\operatorname{Pl}}_{10}}\left(\operatorname{Hess}_{216}\right).
\end{eqnarray*}
\end{enumerate}
\end{cor}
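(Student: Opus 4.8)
The plan is to deduce every item as a bookkeeping consequence of the classification in Theorem~\ref{mainresult}, using as organizing principle the disjoint decomposition
$$
\mathcal{M}^{\operatorname{Pl}}_{10}(H)\;=\;\bigsqcup_{G'}\;\widetilde{\mathcal{M}^{\operatorname{Pl}}_{10}}(G'),
$$
where $G'$ ranges over the $\PGL_3(K)$-conjugacy classes of full automorphism groups listed in Table~\ref{table:fullAuto.} that admit a subgroup $\PGL_3(K)$-conjugate to the prescribed copy of $H$; the union is disjoint because each isomorphism class of curve lies in exactly one $\widetilde{\mathcal{M}^{\operatorname{Pl}}_{10}}$, indexed by its actual full automorphism group. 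The first step is to replace each abstract group $H$ occurring in the statement by the concrete finite subgroup of $\PGL_3(K)$ determined by the generators in the table. This is not a formality: the symbols $\varrho_{(a,b)}(\Z/n\Z)$, $\varrho_h$ and $\varrho_{nh}$ denote genuinely distinct $\PGL_3(K)$-conjugacy classes of one abstract group, and $\mathcal{M}^{\operatorname{Pl}}_{10}(H)$ depends on the class, not merely on the isomorphism type. A second, implicit input everywhere is the completeness of Table~\ref{table:fullAuto.}: it is precisely because the table exhausts all automorphism groups that an emptiness statement can be reduced to a containment check against a finite explicit list.

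Granting the concrete copies, the positive assertions (items 2, 5, and the nonempty parts of items 1 and 3) reduce to reading the arrows of the stratification diagram of Theorem~\ref{mainresult}, each arrow recording a $\PGL_3(K)$-conjugacy inclusion between the groups of two strata. For item~2 the full groups dominating $\operatorname{Q}_8$ are exactly $\operatorname{SD}_{16}$, $\Z/3\Z\times\operatorname{GL}_2(\mathbb{F}_3)$ and $\operatorname{Hess}_{216}$ (a copy of $\operatorname{Q}_8$ sitting as the index-$2$ subgroup of $\operatorname{SD}_{16}$ and inside the copies of $\operatorname{SL}_2(\mathbb{F}_3)$ in the other two), which gives the stated four-term decomposition; its dimension equals the number of essential parameters $\beta_{2,0},\beta_{2,2}$ of the generic $\operatorname{Q}_8$-family, namely $2$, once one checks that the residual action of the normalizer of $\operatorname{Q}_8$ on that family is finite. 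Items~3 and~5 are identical in spirit: the equalities $\mathcal{M}^{\operatorname{Pl}}_{10}(\Z/8\Z)=\mathcal{M}^{\operatorname{Pl}}_{10}(\operatorname{SD}_{16})$ and $\mathcal{M}^{\operatorname{Pl}}_{10}(\operatorname{He}_3)=\mathcal{M}^{\operatorname{Pl}}_{10}\bigl((\Z/3\Z)^2\rtimes\Z/6\Z\bigr)$ express that the set of listed full groups containing the smaller group coincides with the set containing the larger one (for $\operatorname{He}_3$ these are $(\Z/3\Z)^2\rtimes\Z/6\Z$, $(\Z/6\Z)^2\rtimes\operatorname{S}_3$ and $\operatorname{Hess}_{216}$, each having Sylow $3$-subgroup $\operatorname{He}_3$), so the two index sets agree term by term.

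For the emptiness assertions I would show that no full group in the table admits a subgroup $\PGL_3(K)$-conjugate to the given $H$. The cases $\Z/9\Z$ and $(\Z/3\Z)^3$ follow from the Sylow $3$-structure: the only listed groups with $3^3\mid|G|$ are the three order-$216$ and order-$54$ groups, whose Sylow $3$-subgroup is the extraspecial $\operatorname{He}_3$ of exponent $3$, which contains neither a cyclic order-$9$ element nor a rank-$3$ elementary abelian subgroup; all other groups (in particular $\operatorname{A}_6$, of Sylow $3$-order $9$) are too small. The $2$-group cases $\Z/2\Z\times\Z/4\Z$ and $(\Z/2\Z)^3$ are handled analogously, by checking that neither occurs inside any of the Sylow $2$-subgroups arising in the table (at most $\operatorname{SD}_{16}$, whose order-$8$ subgroups are only $\Z/8\Z$, $\operatorname{D}_8$, $\operatorname{Q}_8$). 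The genuinely delicate point, and the main obstacle, is the conjugacy-class bookkeeping behind item~1: since $\operatorname{A}_6$ does contain order-$5$ elements, the disjointness $\mathcal{M}^{\operatorname{Pl}}_{10}(\varrho_{(0,1)}(\Z/5\Z))\cap\mathcal{M}^{\operatorname{Pl}}_{10}(\operatorname{A}_6)=\emptyset$ (note $\mathcal{M}^{\operatorname{Pl}}_{10}(\operatorname{A}_6)=\widetilde{\mathcal{M}^{\operatorname{Pl}}_{10}}(\operatorname{A}_6)$, as $\operatorname{A}_6$ is maximal) is \emph{not} an order argument. One must compute the eigenvalue triple, i.e.\ the diagonal weights modulo scalars, of an order-$5$ element of the Valentiner group $3.\operatorname{A}_6\subset\operatorname{SL}_3(K)$ and verify that it has three distinct nonzero weights, matching $\varrho_{(1,2)}$ rather than the pseudo-reflection pattern of $\varrho_{(0,1)}$. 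The same weight analysis distinguishes the two ES-irreducible components of $\mathcal{M}^{\operatorname{Pl}}_{10}(\varrho_{(1,2)}(\Z/5\Z))$, where one further needs that the $\operatorname{D}_{10}$-locus arises from a genuinely different normal form and hence is not a specialization of the generic $\varrho_{(1,2)}(\Z/5\Z)$-family; establishing that the ES-irreducible component structure (as opposed to the set-theoretic decomposition) is read off correctly requires invoking the irreducibility of each generic family and its closure relations from the earlier development. It is this eigenvalue/closure analysis, rather than the diagram reading, where the real verification lies, and where agreement with the corresponding lemmas of \cite{DoiIdei} is confirmed.
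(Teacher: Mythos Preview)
Your proposal is correct and matches the paper's own treatment: the corollary is stated immediately after Theorem~\ref{mainresult} with no separate proof, precisely because every item is meant to be read off from Table~\ref{table:fullAuto.} and the stratification diagram via the decomposition $\mathcal{M}^{\operatorname{Pl}}_{10}(H)=\bigsqcup_{G'}\widetilde{\mathcal{M}^{\operatorname{Pl}}_{10}}(G')$ that you make explicit. Your unpacking of the Sylow arguments for items~3 and~4, and of the eigenvalue-pattern distinction between $\varrho_{(0,1)}(\Z/5\Z)$ (a homology) and the non-homology order-$5$ elements sitting inside $\operatorname{D}_{10}\subset\operatorname{A}_5\subset\operatorname{A}_6$, is exactly the content encoded in the arrows of the diagram and in Propositions~\ref{501}, \ref{512}, and~\ref{aut514}.
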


We can see in the stratification another example of a phenomenon that did not happen for quartics, but it happened for quintics for the first time and now for sextics. In \cite{finalstratum}, Badr-Lorenzo defined a \emph{final stratum} by automorphism groups to be a non-zero dimensional stratum that does not properly contain any other stratum. This may sound
odd since we could expect that by adding conditions in the parameters we would get bigger
automorphisms groups. However, they showed in \cite{finalstratum} that this is a normal situation for higher odd degrees $d$ such that $d=1\,\operatorname{mod}\,4$.

For even degrees, we now have an example. More precisely,
\begin{cor}
The stratum $\widetilde{\mathcal{M}^{\operatorname{Pl}}}_{10}\left(\varrho_{(1,2)}\left(\Z/5\Z\right)\right)$ is the unique \emph{final stratum} that exists for degree $6$.
\end{cor}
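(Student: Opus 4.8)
The plan is to read finality directly off the completed stratification of Theorem~\ref{mainresult}. In the diagram of Figure~\ref{stratdiagram} a directed edge $G\to G'$ records that $\mathcal{M}^{\operatorname{Pl}}_{10}(G)$ properly contains $\mathcal{M}^{\operatorname{Pl}}_{10}(G')$, that is, that some smooth member of the family attached to $G$ specializes to a curve with the strictly larger group $G'$. Hence a positive-dimensional stratum is final exactly when its node emits no edge, and the statement reduces to two checks: that $\varrho_{(1,2)}(\Z/5\Z)$ is positive-dimensional and emits no edge, and that every other node emitting no edge is zero-dimensional (equivalently, that every positive-dimensional node other than $\varrho_{(1,2)}(\Z/5\Z)$ does emit an edge).

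For the first check, positive-dimensionality is clear from Table~\ref{table:fullAuto.}: the family $X^6+XZ^5+XY^5+\beta_{3,1}X^3YZ^2+\beta_{2,3}X^2Y^3Z+\beta_{0,2}Y^2Z^4$ carries three essential parameters, so the stratum is positive-dimensional. The substance is to prove $\mathcal{M}^{\operatorname{Pl}}_{10}(\varrho_{(1,2)}(\Z/5\Z))=\widetilde{\mathcal{M}^{\operatorname{Pl}}}_{10}(\varrho_{(1,2)}(\Z/5\Z))$. Writing $g:=\operatorname{diag}(1,\zeta_5,\zeta_5^2)$, the six monomials above are exactly the degree-$6$ forms $X^aY^bZ^c$ with $b+2c\equiv 0\pmod{5}$, so every member $F$ satisfies $g\cdot F=F$; call this the $e=0$ type. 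I would first note that the diagonal part of $\operatorname{Aut}(C)$ equals $\langle g\rangle$ for every smooth member: smoothness forces $XY^5$ and $XZ^5$ to be present (otherwise one of $[0\!:\!0\!:\!1]$, $[0\!:\!1\!:\!0]$ becomes singular), and the weights of the monomials then pin the stabilizing torus element down to $\langle g\rangle$.

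It remains to exclude a non-diagonal automorphism. By Theorem~\ref{mainresult} the only over-groups of order divisible by $5$ are $\Z/10\Z,\Z/15\Z,\Z/25\Z,\Z/30\Z,\operatorname{D}_{10},\operatorname{A}_5,\operatorname{A}_6$. For the four cyclic ones the order-$5$ element is a homology, whose subgroup is the class $\varrho_{(0,1)}(\Z/5\Z)$ of $\operatorname{diag}(1,1,\zeta_5)$; this has a repeated eigenvalue and so is not $\PGL_3(K)$-conjugate to the regular element $g$, whence such curves cannot lie in our stratum. Each of $\operatorname{D}_{10},\operatorname{A}_5,\operatorname{A}_6$ contains a copy of $\operatorname{D}_{10}$, hence an involution $\tau$ normalizing a regular $\Z/5\Z$. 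Such a $\tau$ lies in $N_{\PGL_3(K)}(\langle g\rangle)=T\rtimes\langle\sigma\rangle$ with $T$ the diagonal torus and $\sigma=[Z:Y:X]$; since $\sigma$ sends a monomial of $g$-weight $e$ to one of weight $2-e$, a curve supported in a single eigenspace $E_e$ and preserved by $\tau$ must satisfy $2-e\equiv e$, i.e. $e\equiv 1\pmod 5$. As the dichotomy $e=0$ versus $e\neq0$ is an isomorphism invariant of the pair (curve, regular $\Z/5\Z$), no $e=0$ curve can carry such a $\tau$, so none of these larger groups occurs. This shows $\varrho_{(1,2)}(\Z/5\Z)$ emits no edge and is final.

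For the second check I would run through the positive-dimensional strata of the table and exhibit, for each, the specialization recorded by an outgoing edge of Figure~\ref{stratdiagram}: for instance the one-parameter Fermat-type families $X^6+Y^6+Z^6+\beta_{2,2}X^2Y^2Z^2$ acquire larger symmetry for suitable $\beta_{2,2}$, the $\operatorname{A}_5$-family degenerates through $\gamma$ to the Wiman curve $\mathcal{W}_6$, and likewise for every remaining $\Z/2\Z$-, $\Z/3\Z$-, $\Z/4\Z$-, $\Z/5\Z$- and $\Z/6\Z$-type family of positive dimension; finally one reads off from Table~\ref{table:fullAuto.} that every other terminal node ($\operatorname{A}_6$, $\operatorname{Hess}_{216}$, $\operatorname{PSL}(2,7)$, $\Z/3\Z\times\operatorname{GL}_2(\mathbb{F}_3)$, $(\Z/6\Z)^2\rtimes\operatorname{S}_3$, $\Z/21\Z\rtimes\Z/3\Z$, $\Z/25\Z$, $\Z/30\Z$) carries a parameter-free equation, hence is zero-dimensional and not a final stratum. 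The main obstacle is precisely the finality argument of the previous paragraph: because the $\Z/5\Z$ inside $\operatorname{D}_{10},\operatorname{A}_5,\operatorname{A}_6$ is genuinely $\PGL_3(K)$-conjugate to $\varrho_{(1,2)}(\Z/5\Z)$, one cannot separate the strata by the conjugacy class of the group alone, and must instead use the finer eigenspace-type invariant $g\cdot F=F$ versus $g\cdot F=\zeta_5^{e}F$ with $e\neq0$, together with the explicit normalizer $N_{\PGL_3(K)}(\langle g\rangle)=T\rtimes\langle[Z:Y:X]\rangle$.
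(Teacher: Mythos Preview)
Your argument is correct, and in fact more explicit than what the paper offers. The paper states the corollary without proof, as an immediate consequence of Theorem~\ref{mainresult}: one simply reads off the stratification diagram that $\varrho_{(1,2)}(\Z/5\Z)$ is the unique positive-dimensional node with no outgoing edge. The underlying content is Proposition~\ref{512}, whose proof proceeds through Harui's case analysis (Theorem~\ref{teoHarui}): $C$ cannot be a Fermat or Klein descendant for order reasons; if $\Aut(C)$ were primitive it would contain an involution normalizing $\langle\sigma\rangle$, which ``it is straightforward to see'' does not exist; and in the fixed-point case one checks explicitly that $\Lambda(\tau)=[\lambda Z:Y]$ forces all parameters to vanish, contradicting smoothness.

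Your route is genuinely different and more conceptual at the key step. Rather than Harui's trichotomy, you list the candidate over-groups directly from Table~\ref{table:fullAuto.}, dispose of the cyclic ones by observing that their $5$-torsion is a homology (hence in the class $\varrho_{(0,1)}$, not $\varrho_{(1,2)}$), and then handle $\operatorname{D}_{10},\operatorname{A}_5,\operatorname{A}_6$ uniformly via the weight invariant: since $N_{\PGL_3(K)}(\langle g\rangle)=T\rtimes\langle[Z\!:\!Y\!:\!X]\rangle$ and the non-diagonal coset sends $g$-weight $e$ to $2-e$ on degree-$6$ forms, a normalizing involution can only exist when $e\equiv 1$, whereas the Type $5,(1,2)$ family sits in weight $e=0$. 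This is exactly the mechanism behind the ``two disjoint ES-irreducible components'' of Corollary~\ref{correctDIK}(1), and it explains structurally (rather than computationally) why no specialization reaches $\operatorname{D}_{10}$. The paper's approach has the advantage of being self-contained within the Harui framework used throughout; yours has the advantage of isolating the obstruction as a single discrete invariant.
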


\begin{rem}
  It appeared a unique final stratum for quintics and sextics respectively. So one may wonder about the existence and uniqueness of such final strata in each degree $d\geq7$.
  \end{rem}

\section{Automorphisms of maximal order and defining equations}
Let $C:F(X,Y,Z)=0$ be a smooth plane sextic curve with non-trivial automorphism group $\operatorname{Aut}(C)\subseteq\operatorname{PGL}_3(K)$ over an algebraically closed field $K$ of characteristic $0$, and let $\sigma\in\operatorname{Aut}(C)$ be of maximal order $m$. Up to projective equivalence, we can assume that $\sigma$ acts as $(X:Y:Z)\mapsto(X:\zeta_m^a
Y:\zeta_m^b Z)$, where $\zeta_m$ denotes a fixed primitive $m$-th root of unity in
$K$, and $a,b$ are integers with $0\leq a<b\leq m-1$.
In this case, we say that $C$ is of \emph{Type $m,(a,b)$}.

Our work in \cite{MR3475065} allows us to generate all possible Types $m,(a,b)$ once the degree $d$ is fixed. In particular, we know that the positive integer $m$ should divide $21,\,24,\,25$ or $30$, see \cite[Corollary 33]{MR3475065}. On the other hand, we can associate to each Type $m,(a,b)$ a \emph{geometrically complete family over $K$}. That is, a defining polynomial equation $F_{m,(a,b)}(X,Y,Z)=0$ with parameters $\beta_{i,j}\in K$ as its coefficients, where any smooth plane degree $d$ curve of Type $m,(a,b)$ is defined over $K$ by a specialization of those parameters.

As a consequence, we obtain:

\begin{prop}\label{maximalordercyclic}
Let $C$ be a smooth plane curve of degree $6$ over $K$. Then, $C$ reduces to one of the following types. In particular, the stratum $\mathcal{M}^{\operatorname{Pl}}_{10}(\Z/m\Z)\neq\emptyset$ if and only if $m\in\{2,3,4,5,6,7,8,10,12,15,21,24,25,30\}$.
\scriptsize
\begin{table}[!th]
  \renewcommand{\arraystretch}{1.3}
  \caption{Maximal Cyclic Subgroups and Defining Equations\,\,\,}\label{table:CYclic Auto621.}
  \vspace{4mm} 
  \centering
\scriptsize
\begin{tabular}{|c|c|c|}
  \hline
 & Type: $m, (a,\,b)$ & $F(X,Y,Z)$ \\\hline\hline
1 & $30,(5,6)$& $X^{6} + Y^{6}+X Z^{5} $ \\\hline
2 & $25,(1,20)$& $X^{6} + Y^{5} Z+X Z^{5} $ \\\hline
3 & $24,(1,19)$& $X^{6} + Y^{5} Z +Y Z^{5}$ \\\hline
4 &   $21,(1,17)$& $X^{5} Y + Y^{5} Z+X Z^{5} $ \\\hline
5 & $15,(5,6)$& $X^{6} + Y^{6}+X Z^{5}+{\beta_{3,3}} X^{3} Y^{3} $ \\\hline
6  &  $12,(1,7)$& $X^{6} + Y^{5} Z +Y Z^{5} +{\beta_{3,3}}Y^{3} Z^{3} $ \\\hline
7 & $10,(5,6)$& $X^{6} + Y^{6}+X Z^{5}+ {\beta_{4,2}}X^{4} Y^{2} + {\beta_{2,4}}X^{2} Y^{4}  $ \\\hline
8 &  $8,(1,3)$& $X^{6} + Y^{5} Z +Y Z^{5} + {\beta_{4,2}}X^{2} Y^{2} Z^{2} $ \\\hline
9 & $7,(1, 3)$ & $X^{5}Y+Y^{5}Z+X Z^{5}+{{\beta_{4,2}}} X^{2}Y^{2}Z^{2}$\\\hline
10 & $6,(1,2)$& $X^6+Y^6+Z^6+\beta_{3,0}X^3Z^3+\beta_{2,2}X^2Y^2Z^2+\beta_{1,4}XY^4Z$ \\\hline
11 & $6,(1,3)$& $X^{6} + Y^{6} + Z^{6}+{\beta_{2,0}}X^{4} Z^{2}  + {\beta_{0,3}} Y^{3} Z^{3} +  X^{2}{\left({\beta_{4,0}}Z^{4} + {\beta_{4,3}}Y^{3} Z \right)}$ \\\hline
12 & $6,(0,1)$& $Z^{6} + {L_{6,Z}}$ \\\hline
13 & $5,(1,2)$& $X^6+XZ^5+XY^5+\beta_{3,1}X^3YZ^2+\beta_{2,3}X^2Y^3Z+\beta_{0,2}Y^2Z^4$ \\\hline
14 & $5,(1,4)$& $X^{6} + X Z^{5}+X Y^{5}+{\beta_{4,1}}X^{4} Y Z  + {\beta_{2,2}}X^{2} Y^{2} Z^{2}  + {\beta_{0,3}}Y^{3} Z^{3} $ \\\hline
15 &   $5,(0,1)$& $Z^{5}Y  + {L_{6,Z}}$ \\\hline
16 & $4,(1,3)$& $X^6+Y^5Z+YZ^5+\beta_{0,3}Y^3Z^3+\beta_{4,1}X^4YZ+X^2\left(\beta_{2,0}Z^4+\beta_{2,2}Y^2Z^2+\beta_{2,4}Y^4\right)$ \\\hline
17 & $3,(1, 2)$ & $X^5Y+Y^5Z+XZ^5+XYZ\left(\beta_{3,2}X^2Y+\beta_{1,3}Y^2Z+\beta_{2,1}XZ^2\right)+\beta_{2,4}X^2Y^4+\beta_{0,2}Y^2Z^4+\beta_{4,0}X^4Z^2$\\\hline
18 & $3,(1, 2)$ & $X^6+Y^6+Z^6+XYZ\left(\beta_{4,1}X^3+\beta_{1,4}Y^3+\beta_{1,2}Z^3\right)+\beta_{2,2}X^2Y^2Z^2+\beta_{3,3}X^3Y^3+\beta_{3,0}X^3Z^3+\beta_{0,3}Y^3Z^3$\\\hline
19 & $3,(0,1)$& $Z^{6} + Z^{3}{L_{3,Z}}  + {L_{6,Z}}$ \\\hline
20 & $2,(0,1)$& $Z^{6} + Z^{4}{L_{2,Z}}  + Z^{2}{L_{4,Z}}+{L_{6,Z}} $
\\\hline
   \end{tabular}
\end{table}
\normalsize
\end{prop}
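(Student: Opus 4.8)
The plan is to specialize the general framework of \cite{MR3475065} to the degree $d=6$ and then read off the list by an explicit, finite computation. First I would invoke \cite[Corollary 33]{MR3475065}, which bounds the order of any cyclic automorphism of a smooth plane sextic: it must divide one of the four integers $21=d^2-3d+3$, $24=d(d-2)$, $25=(d-1)^2$, or $30=d(d-1)$. The divisors greater than $1$ of these four numbers are exactly the fourteen values $\{2,3,4,5,6,7,8,10,12,15,21,24,25,30\}$. This already yields the ``only if'' half of the stratum assertion: if $\Z/m\Z\subseteq\Aut(C)$ then $C$ carries an automorphism of order $m$, whence $m$ lies in this set.

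For the reduction and the ``if'' half, I would fix in $C$ an automorphism $\sigma$ of maximal order $m$, diagonalize it as $\operatorname{diag}(1,\zeta_m^a,\zeta_m^b)$, and then process, for each admissible $m$, every diagonal type $(a,b)$ produced by the enumeration of \cite{MR3475065}. A monomial $X^iY^jZ^k$ with $i+j+k=6$ is a $\sigma$-eigenvector of weight $aj+bk\in\Z/m\Z$, so the space of degree-$6$ semi-invariants decomposes as a direct sum of weight spaces, each spanned by the monomials with $aj+bk\equiv e\pmod m$ for a fixed $e$. For each $(a,b)$ I would write these spaces out and impose the Jacobian criterion. Smoothness discards every weight whose monomial support forces a singularity at a coordinate vertex $(1{:}0{:}0),(0{:}1{:}0),(0{:}0{:}1)$ or along a coordinate line; what survives is, for most types, a single admissible weight, but occasionally two (this is precisely why the single diagonal type $3,(1,2)$ produces the two families in rows~17 and~18). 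Within each surviving weight, smoothness also forces certain ``frame'' monomials---such as $X^6,\ Y^5Z,\ XZ^5,\ X^5Y$---to appear with nonzero coefficient, while the remaining monomials of that weight stay free and are recorded as the parameters $\beta_{i,j}$ or absorbed into the generic forms $L_{i,B}$.

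Next I would normalize. The diagonal torus of $\PGL_3(K)$ together with the coordinate permutations lying in the normalizer of $\langle\sigma\rangle$ preserves each weight space; I would use the three independent rescalings of $X,Y,Z$ to set the frame coefficients equal to $1$, and any available permutation symmetry to merge the coefficients of monomials lying in a common orbit. What is left is precisely the generic equation $F_{m,(a,b)}$ recorded in Table~\ref{table:CYclic Auto621.}. A Jacobian check on a generic specialization then shows that each family contains smooth members, which realizes all fourteen values of $m$ and completes the ``if'' direction; combined with the enumeration, this proves that every smooth plane sextic reduces to one of the listed types.

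The hard part will be the bookkeeping rather than any isolated deep step. One must (i) certify, for every discarded weight and for every pair $(a,b)$ absent from the table, that no specialization is smooth---that is, that the Newton support forces a singularity at a coordinate point or along a coordinate line---and (ii) prove that each retained family is \emph{geometrically complete}: every smooth sextic admitting the diagonal automorphism of Type $m,(a,b)$ is $K$-isomorphic to a specialization inside the family, with no residual normalizer freedom left to eliminate a further parameter. Both points reduce to finite, if lengthy, linear-algebra and Jacobian computations over $K$, which I would carry out type by type.
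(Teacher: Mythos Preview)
Your proposal is correct and follows essentially the same route as the paper: the authors simply state that the proposition is obtained by specializing the machinery of \cite{MR3475065} to $d=6$, invoking \cite[Corollary~33]{MR3475065} for the divisibility constraint $m\mid 21,24,25,30$ and then reading off the geometrically complete families $F_{m,(a,b)}$ produced by that framework. Your write-up just unpacks what that specialization actually computes (weight decomposition of the degree-$6$ monomials under the diagonal action, Jacobian elimination of singular weights, torus/permutation normalization), which is exactly the content of the cited reference; there is no methodological difference.
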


\section{The automorphism groups for very large Types $m, (a,b)$}
In this section we determine the full automorphism group $\Aut(C)$ when there is $\sigma\in\Aut(C)$ of order $m\in\{d(d-1),\,(d-1)^2,\,d(d-2),\,d^2-3d+3,\,q(d-1):q\geq2\}$.

In particular, we need the following result, see \cite{MR3475065,Harui}:

\begin{thm}\label{BB} Let $C$ be a smooth plane degree $d$ curve of Type $m,\,(a,b)$.
\begin{enumerate}[1.]
\item If $m=d(d-1)$, then $\operatorname{Aut}(C)$ is cyclic of order $d(d-1)$. In this case $C$
is $K$-isomorphic to $C:X^d+Y^d+XZ^{d-1}=0$, where $\Aut(C)=\langle\sigma\rangle$ with $\sigma=\operatorname{diag}(1,\zeta_{d(d-1)}^{d-1},\zeta_{d(d-1)}^d)$.

\item If $m=(d-1)^2$, then $\operatorname{Aut}(C)$ is cyclic of order $(d-1)^2$. In this case $C$
is $K$-isomorphic to $C:X^d+Y^{d-1}Z+XZ^{d-1}=0$, where $\Aut(C)=\langle\sigma\rangle$ with $\sigma=\operatorname{diag}(1,\zeta_{(d-1)^2},\zeta_{(d-1)^2}^{-(d-1)})$.
\item If $m=d(d-2)$, then $C$ is $K$-isomorphic to
$C:X^d+Y^{d-1}Z+YZ^{d-1}=0$. For $d\neq 4,6$, $\Aut(C)$ is a central extension of the dihedral group $\operatorname{D}_{2(d-2)}$ by $\Z/d\Z$. More precisely,
$$\operatorname{Aut}(C)=\langle\sigma,\,\tau\,|\,\sigma^{d(d-2)}=\tau^2=1,\, \tau\sigma\tau=\sigma^{-(d-1)},...\rangle,$$
with $\sigma=\operatorname{diag}(1,\zeta_{d(d-2)},\zeta_{d(d-2)}^{-(d-1)})$ and $\tau=[X:Z:Y]$.
Therefore, $|\operatorname{Aut}(C)|=2d(d-2)$. When $d=6$, $\operatorname{Aut}(C)$ is a central extension of the symmetry group $\operatorname{S}_4$ by $\Z/6$, so it has order $144$. When $d=4$, $C$ is $K$-isomorphic to the Fermat quartic curve $\mathcal{F}_4:X^4+Y^4+Z^4=0$ and $\operatorname{Aut}(C)\cong(\Z/4\Z)^2\rtimes\operatorname{S}_3$
\item If $m=d^2-3d+3$, then $C$ is $K$-isomorphic to the Klein
curve defined by $\mathcal{K}_d:X^{d-1}Y+Y^{d-1}Z+Z^{d-1}X=0$. Moreover, for $d\geq 5$, we have
$$\operatorname{Aut}(C)=\langle\sigma,\tau|\sigma^{d^2-3d+3}=\tau^3=1\,\tau^{-1}\sigma\tau=\sigma^{-(d-1)}\rangle,$$
with $\sigma=\operatorname{diag}(1,\zeta_{d^2-3d+3},\zeta_{d^2-3d+3}^{-(d-2)})$ and $\tau=[Y:Z:X]$.
such that $|\operatorname{Aut}(C)|=3(d^2-3d+3)$.
\item If $m=q(d-1)$ for some $q\geq 2$, then
$\operatorname{Aut}(C)$ is cyclic of order $qq'(d-1)$ for some $q'$. 
\end{enumerate}
\end{thm}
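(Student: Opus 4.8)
The plan is to combine two inputs. The normal forms in each row come from the Type-reduction of Badr--Bars \cite{MR3475065} (recorded for $d=6$ in Proposition \ref{maximalordercyclic}): once a diagonal $\sigma$ of the prescribed order $m$ is present, only finitely many $\sigma$-invariant monomials survive, and smoothness collapses the family to the rigid model written in the statement. The actual computation of $\Aut(C)$ then rests on Harui's structural trichotomy \cite{Harui}: for a smooth plane curve of degree $d\geq4$, the group $G:=\Aut(C)$ either (i) fixes a point $P$ and a line $\ell$ with $P\notin\ell$, so that there is an exact sequence $1\to N\to G\to \bar{G}\to1$ with $N$ cyclic (the kernel of the action on $\ell$) and $\bar{G}\hookrightarrow\PGL_2(K)$; or (ii) stabilizes a triangle, whence $G\hookrightarrow(\Z/d\Z)^2\rtimes\operatorname{S}_3$; or (iii) is one of the primitive groups $\operatorname{A}_5,\operatorname{PSL}(2,7),\operatorname{A}_6$, or a Hessian group, all of fixed order $\leq360$. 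For each extremal $m$ the task is to decide which alternative holds and then to read off $G$.

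For the cyclic conclusions (items 1, 2, 5) the decisive feature is a point of maximal inflection. For $X^d+Y^d+XZ^{d-1}$ the vertex $[0:0:1]$ lies on $C$, its tangent $\{X=0\}$ meets $C$ only there with contact $d$; the same holds at $[0:1:0]$ for $X^d+Y^{d-1}Z+XZ^{d-1}$, and an analogous hyperflex exists in the general $q(d-1)$ model. Since a total inflection point is intrinsic, $G$ must preserve the associated flag $(P,\ell)$, placing us in Harui's case (i). The cyclic $N$ is then computed from the $\sigma$-eigenvalue transverse to $\ell$, and the rigidity of the model (no free parameters) forces $\bar{G}\leq\PGL_2(K)$ to be cyclic as well; assembling $N$ with $\bar{G}$ gives a cyclic $G$ whose order I would match against $\langle\sigma\rangle$. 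In items 1 and 2 this yields $G=\langle\sigma\rangle$ of order $d(d-1)$ resp.\ $(d-1)^2$, while in item 5 the extra factor $q'$ simply records how much larger the full cyclic group can be than $\langle\sigma\rangle$.

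For the non-cyclic items 3 and 4 I would exhibit the extra generator and verify the relations directly. On $X^d+Y^{d-1}Z+YZ^{d-1}$ the involution $\tau=[X:Z:Y]$ preserves $C$, and using $(d-1)^2\equiv1\pmod{d(d-2)}$ one checks $\tau\sigma\tau^{-1}=\sigma^{-(d-1)}$, so $\langle\sigma,\tau\rangle$ is the asserted central extension of $\operatorname{D}_{2(d-2)}$ by $\Z/d\Z$, of order $2d(d-2)$. On the Klein curve $\tau=[Y:Z:X]$ of order $3$ preserves $C$ and again conjugates $\sigma$ to $\sigma^{-(d-1)}$, giving the metacyclic group of order $3(d^2-3d+3)$. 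In both cases the resulting order matches the claim, and what remains is the upper bound.

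The main obstacle is precisely this upper bound, together with the small-degree coincidences. Ruling out Harui's primitive case (iii) is easy once $m$ exceeds $360$, so the genuinely delicate degrees are the small ones where a primitive group can occur; these I would settle by computing the normalizer of $\langle\sigma\rangle$ in $\PGL_3(K)$ intersected with the stabilizer of the rigid equation, i.e.\ by the explicit invariant-monomial analysis of \cite{MR3475065,Harui}. This is where the stated exceptions live: for $d=6$ the curve $X^6+Y^5Z+YZ^5$ acquires the extra automorphism $[X:c(\zeta_8^5Z-Y):\zeta_8 c(\zeta_8Z-Y)]$ with $c=(1+i)/2$, enlarging $G$ to the order-$144$ central extension of $\operatorname{S}_4$ by $\Z/6\Z$, while for $d=4$ the same family specializes to the Fermat quartic with $G\cong(\Z/4\Z)^2\rtimes\operatorname{S}_3$. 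I expect certifying that no further hidden symmetry survives in these rigid models --- and conversely identifying exactly the extra symmetries forced at $d=4,6$ --- to be the hardest and most computational part of the argument.
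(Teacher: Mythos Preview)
The paper does not prove Theorem \ref{BB}; it is quoted from \cite{MR3475065,Harui} and used as input. So there is no ``paper's own proof'' to compare against, only your sketch to evaluate on its merits.

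Your overall plan (normal forms from \cite{MR3475065}, then Harui's structure theorem) is the right one, and your treatment of items 3 and 4 is reasonable. But there is a genuine gap in your cyclicity argument for items 1, 2, 5. You correctly locate a total-inflection point $P\in C$ and argue that $G$ must fix the flag $(P,\ell)$. You then place yourself in the exact-sequence alternative $1\to N\to G\to\bar G\to 1$ and claim that ``rigidity of the model forces $\bar G$ cyclic'' and that ``assembling $N$ with $\bar G$ gives a cyclic $G$''. Neither step is justified: the models in item 5 are \emph{not} rigid (for $d=6$ the Types $10,(5,6)$ and $15,(5,6)$ carry free parameters), rigidity in any case does not by itself bound $\bar G$, and an extension of a cyclic group by a cyclic group need not be cyclic. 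The shortcut you are missing is already in Theorem \ref{teoHarui}: its case (1) says that if $\Aut(C)$ fixes a point \emph{on} $C$, then $\Aut(C)$ is cyclic outright. Since your $P$ lies on $C$, you are in case (1), not the exact-sequence case (2), and cyclicity is immediate.

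A second, smaller gap: ``a total inflection point is intrinsic'' only tells you that $G$ permutes the set of such points, not that it fixes your particular $P$. The uniqueness input is the inner-Galois-point theory: $\sigma$ has a power which is a homology of period $d-1$ with center $P$, so by Proposition \ref{d,d-1 homologies} $P$ is an inner Galois point, and Yoshihara's uniqueness results (used exactly this way in \S\ref{case501}) force $P$ to be $G$-fixed. Once you route the argument through this and through case (1) of Theorem \ref{teoHarui}, the cyclic conclusions follow cleanly and the parameter-carrying families in item 5 cause no trouble.
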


Substituting $d=6$ in Theorem \ref{BB} yields:
\begin{cor}\label{verylarge} The stratum $\widetilde{\mathcal{M}^{\operatorname{Pl}}_{10}}(G)\neq\emptyset$ if  $G=\Z/30\Z,\,\Z/25\Z,\,\Z/3\Z\times\operatorname{GL}_2(\mathbb{F}_3),\,$ $\Z/21\Z\rtimes\Z/3\Z,\,\Z/15\Z,\,\Z/10\Z.$
Moreover, the generators and the defining equation corresponding to Types $m, (a,b)$ with $m=30,25,24,21,15$ and $10$ are given below.
\begin{center}
\begin{table}[!th]
  \renewcommand{\arraystretch}{1.3}
  \caption{Automorphism Groups For Very Large Types $m, (a,b)$}\label{table:VeryLargeAuto.}
  \vspace{4mm} 
  \centering
\scriptsize
\begin{tabular}{|c|c|c|c|}
  \hline
Type $m, (a,b)$& $\Aut(C)$ &\textit{Generators} & $F(X,Y,Z)$ \\\hline\hline
$30, (5,6)$ & $\Z/30\Z$& $\operatorname{diag}(1,\zeta_{30}^5,\zeta_{30}^6)$& $X^6+Y^6+XZ^5$ \\\hline
$25, (1,20)$ & $\Z/25\Z$& $\operatorname{diag}(1,\zeta_{25},\zeta_{25}^{20})$ & $X^6+Y^5 Z+XZ^5$ \\\hline
$24, (1,19)$ & 
$\Z/3\Z\times\operatorname{GL}_2(\mathbb{F}_3)$ & $\operatorname{diag}(1,\zeta_{24},\zeta_{24}^{19}),\,[X:Z:Y],$ & $X^6+Y^5Z+YZ^5$ \\
  &                                                            & $[X:c(\zeta_{8}^{5}Z-Y):\zeta_{8}c(\zeta_{8}Z-Y)]$ &  \\\hline
$21, (1,17)$ &   $\Z/21\Z\rtimes\Z/3\Z$& $\operatorname{diag}(1,\zeta_{21},\zeta_{21}^{17}),\,[Z:X:Y]$ & $X^5Y+Y^5 Z+Z^5X$  \\\hline
$15, (5,6)$ & $\Z/15\Z$ & $\operatorname{diag}(1,\zeta_{15}^5,\zeta_{15}^{6})$ & $X^6+Y^6+XZ^5+\beta_{3,3}X^3Y^3$ \\
 &  & & $\beta_{3,3}\neq0,\pm2$  \\\hline
$10, (5,6)$  &  $\Z/10\Z$&$\operatorname{diag}(1,\zeta_{10}^5,\zeta_{10}^{6})$ & $X^6+Y^6+XZ^5+\beta_{4,2}X^4Y^2+\beta_{2,4}X^2Y^4$  \\
 &  & & $\beta_{4,2},\beta_{2,4}\neq0$  \\\hline
   \end{tabular}
\end{table}
\end{center}
Here $c:=(1+i)/2$.
\end{cor}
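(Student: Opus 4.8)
The plan is to derive the corollary as the specialization $d=6$ of Theorem \ref{BB}, reading off the resulting canonical models and automorphism groups and matching them against the cyclic normal forms already produced in Proposition \ref{maximalordercyclic}. By \cite[Corollary 33]{MR3475065} every admissible maximal order $m$ divides one of $21,24,25,30$, so among the ``very large'' orders $m\in\{d(d-1),(d-1)^2,d(d-2),d^2-3d+3,q(d-1)\}$ only the values $m=30,25,24,21$ (from items (1)--(4)) and $m=15,10$ (from item (5), with $q=3,2$) actually occur for $d=6$. For each of these I would substitute $d=6$ into the corresponding formula for $\sigma$, simplify the exponents of $\zeta_m$, and transport the canonical equation of Theorem \ref{BB} to the sextic setting.

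Concretely, items (1)--(4) give, respectively: $m=d(d-1)=30$ with $C:X^6+Y^6+XZ^5$ and $\sigma=\operatorname{diag}(1,\zeta_{30}^{5},\zeta_{30}^{6})$; $m=(d-1)^2=25$ with $C:X^6+Y^5Z+XZ^5$ and $\sigma=\operatorname{diag}(1,\zeta_{25},\zeta_{25}^{20})$ (since $-(d-1)\equiv 20 \pmod{25}$); $m=d(d-2)=24$ with $C:X^6+Y^5Z+YZ^5$, $\sigma=\operatorname{diag}(1,\zeta_{24},\zeta_{24}^{19})$ and $\tau=[X:Z:Y]$; and $m=d^2-3d+3=21$ with the Klein sextic $\mathcal{K}_6:X^5Y+Y^5Z+Z^5X$, $\sigma=\operatorname{diag}(1,\zeta_{21},\zeta_{21}^{17})$ and $\tau=[Z:X:Y]$. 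The remaining step here is to identify the abstract isomorphism types: in item (4) the order-$63$ group $\langle\sigma,\tau\rangle$ is $\Z/21\Z\rtimes\Z/3\Z$ directly from the presentation, whereas in item (3) the group has order $144$ and is described as the central extension of $\operatorname{S}_4$ by $\Z/6\Z$. I would check, e.g. with \cite{Gap}, that this extension is $\Z/3\Z\times\operatorname{GL}_2(\mathbb{F}_3)$ (GAP id $(144,122)$), and exhibit the extra generator $[X:c(\zeta_8^5Z-Y):\zeta_8c(\zeta_8Z-Y)]$ with $c=(1+i)/2$, which augments $\langle\sigma,\tau\rangle$ to the full group.

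For the two orders coming from item (5), $m=q(d-1)$, the theorem only guarantees that $\Aut(C)$ is cyclic of order $qq'(d-1)$; since $\sigma$ was chosen of maximal order, I must argue that $q'=1$, so that $\Aut(C)=\langle\sigma\rangle$ is exactly $\Z/15\Z$ (for $q=3$) or $\Z/10\Z$ (for $q=2$). Combining this with the normal forms of Proposition \ref{maximalordercyclic}, rows $15,(5,6)$ and $10,(5,6)$, gives $X^6+Y^6+XZ^5+\beta_{3,3}X^3Y^3$ and $X^6+Y^6+XZ^5+\beta_{4,2}X^4Y^2+\beta_{2,4}X^2Y^4$. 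The open conditions $\beta_{3,3}\neq0,\pm2$ and $\beta_{4,2},\beta_{2,4}\neq0$ are exactly those excluding the specializations that would raise the order (for instance $\beta_{3,3}=0$ collapses the Type $15$ family onto the Type $30$ curve), so the generic member realizes the cyclic group and $\widetilde{\mathcal{M}^{\operatorname{Pl}}_{10}}(\Z/15\Z)$, $\widetilde{\mathcal{M}^{\operatorname{Pl}}_{10}}(\Z/10\Z)$ are nonempty. Non-emptiness for the four rigid strata follows from the explicit smooth models, whose smoothness is a short Jacobian-criterion check valid in characteristic $0$ or $p>21$.

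The main obstacle I expect is the $m=24$ case: unlike the genuinely cyclic cases, the abstract structure of the order-$144$ group is not transparent from the $\sigma,\tau$ data alone, and producing the third generating matrix together with verifying the isomorphism with $\Z/3\Z\times\operatorname{GL}_2(\mathbb{F}_3)$ (rather than some other group of order $144$) is the delicate point. A secondary subtlety is the genericity argument $q'=1$ for $m=15,10$, which requires pinning down exactly which coincidences among the $\beta_{i,j}$ enlarge the automorphism group.
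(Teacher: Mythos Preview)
Your proposal is correct and follows essentially the same route as the paper: specialize Theorem \ref{BB} to $d=6$, read off items (1), (2), (4) directly, treat $m=15,10$ via item (5) together with the parameter exclusions from Proposition \ref{maximalordercyclic}, and isolate $m=24$ as the one case requiring an extra identification. The only noteworthy difference is tactical: for $m=24$ the paper first pins down the visible subgroup $\langle\sigma,\tau\rangle\cong\Z/3\Z\times\operatorname{SD}_{16}=\operatorname{GAP}(48,26)$ and then argues that among the central extensions of $\operatorname{S}_4$ by $\Z/6\Z$ only $\operatorname{GAP}(144,122)=\Z/3\Z\times\operatorname{GL}_2(\mathbb{F}_3)$ contains such a subgroup, which is a cleaner way to single out the correct extension than a bare GAP lookup and also explains \emph{why} the third generator must exist.
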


\begin{proof}
Cases (1), (2) and (4) follows by applying Theorem \ref{BB}-(1), (2) and (4) to Table \ref{table:CYclic Auto621.}-(1),(2) and (4). Moreover, for the Klein sextic curve, we have that
\begin{eqnarray*}
\operatorname{Aut}(\mathcal{K}_6)&=&\langle a,b,c:a^3=b^7=c^3=1,\,ab=ba,\,ac=ca,\,cbc^{-1}=b^4\rangle\\
&\cong&\operatorname{GAP}(63,3),
\end{eqnarray*}
with $a=\sigma^7=\operatorname{diag}(1,\zeta_{3},\zeta_{3}^{2}),\,b=\sigma^3=\operatorname{diag}(1,\zeta_{7},\zeta_{7}^{3}),$ and $c=[Z:X:Y]$.

On the other hand, if $C$ is of Type $24, (1,19)$, then, by Theorem \ref{BB}-(3), $\operatorname{Aut}(C)$ contains the subgroup
\begin{eqnarray*}
H&:=&\langle\sigma,\tau:\sigma^{24}=\tau^2=1,\,\tau\sigma\tau=\sigma^{-5}\rangle\cong\operatorname{GAP}(48,26)\\
&=&\Z/3\Z\times\operatorname{SD}_{16}.
\end{eqnarray*}
Among all central extensions $G$ of $\operatorname{S}_4$ by $\Z/6\Z$,
$\operatorname{GAP}(144,122)$ is the only one that have such a subgroup. For more details, see \href{https://people.maths.bris.ac.uk/~matyd/GroupNames/129/e14/S4byC6.html}{Extensions of $S_4$ by $\Z/6\Z$}.

Regarding $C$ of Type $15, (5,6)$, it follows by Theorem \ref{BB}-(5) with $q=3$ that $\operatorname{Aut}(C)$ is always cyclic. Furthermore, $\Aut(C)\simeq\Z/30\Z$ if and only if $\beta_{3,3}=0$. The extra condition that $\beta_{3,3}\neq\pm2$ are to ensure the smoothness of $C$. Similarly, we deduce by Theorem \ref{BB}-(5) with $q=2$ that $\Aut(C)$ is always cyclic when $C$ is of Type $10, (5,6)$. If it is bigger, then it should be $\Z/30\Z$. This holds if and only if $\beta_{4,2}=\beta_{2,4}=0$, which is excluded by assumption.
\end{proof}

\section{Preliminaries about automorphism groups}
The determination of the finite subgroups $G$ of $\operatorname{PGL}_3(K)$ is quite well understood in the subject. For instance, we recall this one made by H. Mitchell \cite{Mit}, which is based entirely on geometrical methods. H. Mitchell \cite[\S 1-10]{Mit} proved that $G$ fixes a point, a line or a triangle unless it is primitive and conjugate to some group in a specific list. However, as a consequence of Maschke's theorem in group representation theory, the first two cases are equivalent, in the sense that if $G$ fixes a point (respectively a line), then it also
fixes a line not passing through the point (respectively a point not lying the line).

\vspace{0.2cm}
\noindent\textbf{Notations.} For a non-zero monomial $cX^{i_1}Y^{i_2}Z^{i_3}$ with $c\in K^*$, its exponent is defined to be $\operatorname{max}\{i_1,i_2,i_3\}$. For a homogenous polynomial $F(X,Y,Z)$, the core of it is
defined to be the sum of all terms of $F$ with the greatest
exponent. Now, let $C_0$ be a smooth plane curve over $K$, a pair $(C,G)$ with
$G\leq \operatorname{Aut}(C)$ is said to be a descendant of $C_0$ if $C$ is defined
by a homogenous polynomial whose core is a defining polynomial of
$C_0$ and $G$ acts on $C_0$ under a suitable change of the
coordinates system, i.e. $G$ is $\operatorname{PGL}_3(K)$-conjugate to a subgroup of
$\operatorname{Aut}(C_0)$.

An element of $\operatorname{PGL}_3(K)$ is called \emph{intransitive} if it has the matrix shape
$$\left(
                                              \begin{array}{ccc}
                                                 1 & 0 & 0\\
                                                0 & \ast & \ast\\
                                                0 & \ast & \ast \\
                                              \end{array}
                                            \right).$$
The subgroup of $\operatorname{PGL}_3(K)$ of all intransitive elements is denoted by $\operatorname{PBD}(2,1)$. Obviously, there is a natural map $\Lambda:\operatorname{PBD}(2,1)\rightarrow \operatorname{PGL}_2(K)$ given by $$\left(
                                              \begin{array}{ccc}
                                                 1 & 0 & 0\\
                                                0 & \ast & \ast\\
                                                0 & \ast & \ast \\
                                              \end{array}
                                            \right)\in\operatorname{PBD}(2,1)\mapsto\left(
                                                                                                     \begin{array}{cc}
                                                                                                       \ast & \ast \\
                                                                                                       \ast& \ast \\
                                                                                                     \end{array}
                                                                                                   \right)\in \operatorname{PGL}_2(K).$$

To determine the full automorphism groups of the other types of smooth plane sextic curves, Theorem \ref{teoHarui} below is very useful. It can be viewed as a projection of Mitchell's classification to smooth plane curves. For more details, we refer to the work of T. Harui \cite[Theroem 2.1]{Harui}.

\begin{thm}\label{teoHarui} Let $C$ be a smooth plane curve of degree $d\geq4$ defined over an algebraically closed field $K$ of characteristic $0$. Then, one of the following situations holds:
\begin{enumerate}[1.]
  \item $\operatorname{Aut}(C)$ fixes a point on $C$ and then it is cyclic.
  \item $\operatorname{Aut}(C)$ fixes a point not lying on $C$ where we can think about $\Aut(C)$ in the following commutative diagram, with exact rows and vertical injective
morphisms:
  $$
\xymatrix
{
1\ar[r]  & K^*\ar[r]                    & \operatorname{PBD}(2,1)\ar[r]^{\Lambda}& \operatorname{PGL}_2(K)\ar[r]& 1         \\
         &                              &                                        &                              &\\
1\ar[r]  & N\ar[r]\ar@{^{(}->}[uu] & \operatorname{Aut}(C)\ar[r]\ar@{^{(}->}[uu] & G'\ar[r]\ar@{^{(}->}[uu]& 1
}
$$
Here, $N$ is a cyclic group of order dividing the degree $d$ and $G'$ is a
subgroup of $\operatorname{PGL}_2(K)$, which is conjugate to a cyclic group $\Z/m\Z$ of
order $m$ with $m\leq d-1$, a Dihedral group $\operatorname{D}_{2m}$ of order $2m$
with $|N|=1$ or $m|(d-2)$, one of the alternating groups $\operatorname{A}_4$, $\operatorname{A}_5$, or the symmetry group $\operatorname{S}_4$.
\begin{rem}\label{extensionrem}
We note that $N$ is viewed as the part of $\Aut(C)$ acting on the variable $B\in\{X,Y,Z\}$ and fixing the other two variables, while $G'$ is the part acting on $\{X,Y,Z\}-\{B\}$ and fixing $B$. For example, if $B=X$, then every automorphism in $N$ has the shape $\operatorname{diag}(\zeta_n,1,1)$ for some $n$th root of unity $\zeta_n$.
\end{rem}
\item $\operatorname{Aut}(C)$ is conjugate to a subgroup $G$ of $\operatorname{Aut}(\mathcal{F}_d)$, where $\mathcal{F}_d$ is the Fermat curve $X^d+Y^d+Z^d=0$. In particular, $|G|$ divides $|\operatorname{Aut}(\mathcal{F}_d)|=6d^2$, and $(C,G)$ is a descendant of $\mathcal{F}_d$.
\item $\operatorname{Aut}(C)$ is conjugate to a subgroup $G$ of $\operatorname{Aut}(\mathcal{K}_d)$, where $\mathcal{K}_d$ is the Klein curve curve
$XY^{d-1}+YZ^{d-1}+ZX^{d-1}$. In this case, $|\operatorname{Aut}(C)|$ divides $|\operatorname{Aut}(\mathcal{K}_d)|=3(d^2-3d+3)$, and
$(C,G)$ is a descendant of $\mathcal{K}_d$.
\item $\operatorname{Aut}(C)$ is conjugate to one of the finite primitive subgroup of $\operatorname{PGL}_3(K)$ namely, the Klein group
$\operatorname{PSL}(2,7)$, the icosahedral group $\operatorname{A}_5$, the alternating group
$\operatorname{A}_6$, or to one of the Hessian groups $\operatorname{Hess}_{*}$ with $*\in\{36,\,72,\,216\}$.
\end{enumerate}
\end{thm}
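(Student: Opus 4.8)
The plan is to derive Theorem~\ref{teoHarui} from Mitchell's classification of the finite subgroups of $\operatorname{PGL}_3(K)$ recalled above, sorting the argument by the geometric object that $G:=\operatorname{Aut}(C)$ stabilises. Mitchell's theorem gives a dichotomy: either $G$ is primitive, in which case it is $\operatorname{PGL}_3(K)$-conjugate to one of $\operatorname{PSL}(2,7)$, $\operatorname{A}_5$, $\operatorname{A}_6$ or a Hessian group $\operatorname{Hess}_{*}$ with $*\in\{36,72,216\}$ — and this is exactly alternative (5) — or $G$ is imprimitive and fixes a point, a line or a triangle. Using the Maschke-type observation that fixing a point and fixing a line are equivalent, it then suffices to analyse two configurations: $G$ fixes a point, and $G$ fixes a triangle.

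First I would treat the fixed-point case, splitting on whether the fixed point $P$ lies on $C$. If $P\in C$, then $G$ preserves the tangent line $T_PC$ and acts on it by a character $\chi:G\to K^*$; since in characteristic $0$ a finite-order automorphism of a smooth curve that fixes $P$ and acts trivially on $T_PC$ must be the identity, $\chi$ is faithful and hence $G$ is cyclic, which is alternative (1). If $P\notin C$, I would normalise $P=[1:0:0]$ so that $G\subseteq\operatorname{PBD}(2,1)$, and restrict the exact sequence $1\to K^*\to\operatorname{PBD}(2,1)\xrightarrow{\Lambda}\operatorname{PGL}_2(K)\to 1$ to $G$. This produces the commutative diagram of alternative (2) with $N=G\cap\ker\Lambda$ cyclic and $G'=\Lambda(G)$ a finite subgroup of $\operatorname{PGL}_2(K)$, so that the classical list forces $G'$ to be cyclic, dihedral, $\operatorname{A}_4$, $\operatorname{A}_5$ or $\operatorname{S}_4$. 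The remaining task here is to read off the numerical constraints $|N|\mid d$, $m\leq d-1$, and $m\mid(d-2)$ in the dihedral case, by determining which monomials can appear in a $G$-invariant polynomial defining a \emph{smooth} curve of degree $d$.

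For the triangle case I would fix the stabilised triangle as $\{XYZ=0\}$, so that $G$ permutes its three vertices and we obtain a homomorphism $G\to\operatorname{S}_3$ whose kernel consists of diagonal transformations. The idea is to study the core of a defining equation $F$ of $C$: invariance under the diagonal part severely restricts the exponents of the surviving monomials, while the induced $\operatorname{S}_3$-action symmetrises the core. Matching these constraints against smoothness should force the core, up to a coordinate permutation and rescaling, to be either the Fermat polynomial $X^d+Y^d+Z^d$ or the Klein polynomial $X^{d-1}Y+Y^{d-1}Z+Z^{d-1}X$, the dichotomy being governed by whether the image of $G\to\operatorname{S}_3$ contains a transposition (Fermat regime) or is at most the cyclic group generated by a $3$-cycle (Klein regime). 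This exhibits $(C,G)$ as a descendant of $\mathcal{F}_d$ or of $\mathcal{K}_d$, yielding alternatives (3) and (4) together with the divisibility of $|G|$ by $6d^2$ and by $3(d^2-3d+3)$ respectively.

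The step I expect to be the main obstacle is the triangle case, where one must prove that \emph{no} other core can occur — that is, smoothness together with $G$-invariance must rule out every diagonal-plus-permutation configuration whose core is not projectively a Fermat or a Klein curve — and must cleanly separate the two regimes according to the image in $\operatorname{S}_3$. A secondary but still delicate point is pinning down the exact bounds in alternative (2): these do not follow from a single structural argument but require a monomial-by-monomial smoothness analysis of the candidate invariant equations.
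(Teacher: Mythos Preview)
The paper does not prove Theorem~\ref{teoHarui}; it is quoted verbatim from Harui \cite[Theorem~2.1]{Harui} and used as a black box throughout (the paper only remarks that ``it can be viewed as a projection of Mitchell's classification to smooth plane curves''). So there is no in-paper proof to compare your proposal against.

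That said, your outline is the right architecture and matches how Harui's argument actually runs: Mitchell's trichotomy, then the fixed-point case split according to $P\in C$ or $P\notin C$, then the triangle case via the core of the defining polynomial. Two points to tighten. First, in the triangle case your proposed separating invariant --- whether the image of $G\to\operatorname{S}_3$ contains a transposition --- is not what decides Fermat versus Klein. A subgroup of $\operatorname{Aut}(\mathcal{F}_d)$ can perfectly well map onto $\langle(123)\rangle\subset\operatorname{S}_3$ while the curve is still a Fermat descendant; conversely the full $\operatorname{Aut}(\mathcal{K}_d)$ for $d\ge 5$ has image only $\Z/3\Z$. The actual dichotomy in Harui's proof comes from analysing which monomials of maximal exponent survive in a smooth $G$-invariant form: whether the core is forced to contain a pure $d$-th power (Fermat type) or only ``shifted'' terms $X^{d-1}Y$ etc.\ (Klein type). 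Second, a slip of the pen: you wrote ``the divisibility of $|G|$ by $6d^2$'', but of course it is $|G|$ that divides $6d^2$ (resp.\ $3(d^2-3d+3)$), not the other way around.
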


An homology of period $n$ is a projective linear transformation of the plane $\mathbb{P}^2(K)$, which is $\operatorname{PGL}_3(K)$-conjugate to
$\operatorname{diag}(1,1,\zeta_{n})$, where $\zeta_{n}$ is a primitive $n$th root of unity. Such a transformation fixes pointwise a line $\mathcal{L}$ (its axis) and a point $P$ off this line (its center). In its canonical form, $\mathcal{L}:Z=0$ and center $P=(0:0:1)$.

The following fact due to H. Mitchell \cite{Mit} turn out to be very useful in hand when one wants to determine the automorphism group of smooth plane curves over an algebraically closed field $K$ of characteristic 0.
\begin{thm}\label{Mitchell1}
Let $G$ be a finite group of $\operatorname{PGL}_3(K)$. The following holds.
\begin{enumerate}[1.]
  \item If $G$ contains an homology of period $n\geq4$, then it fixes a point, a line or a triangle. On the other hand, a transformation inside $G$, which leaves invariant the center of an homology, must leave invariant its axis and vice versa.
  \item The Hessian group $\operatorname{Hess}_{216}$ is the only finite subgroup of $\operatorname{PGL}_3(K)$ that contains homologies of period $n=3$, and does not leave invariant a point, a line or a triangle.
  \item Inside $G$, a transformation that leaves invariant the center of an homology must leave invariant its axis and vice versa.
\end{enumerate}
\end{thm}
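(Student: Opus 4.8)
The plan is to split the three assertions into two groups. The claims about the coincidence of the stabilizer of a center with that of an axis --- the second sentence of part (1) and the whole of part (3), which say the same thing --- I would prove directly from the finiteness of $G$ together with $\operatorname{char}(K)=0$. The two global claims --- the first sentence of part (1) and part (2) --- I would reduce to Mitchell's classification, already recalled in this section and in Theorem \ref{teoHarui}(5): every finite $G\le\operatorname{PGL}_3(K)$ either leaves invariant a point, a line or a triangle, or is primitive and conjugate to one of $\operatorname{A}_5$, $\operatorname{A}_6$, $\operatorname{PSL}(2,7)$, $\operatorname{Hess}_{36}$, $\operatorname{Hess}_{72}$, $\operatorname{Hess}_{216}$.

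For part (3), let $h\in G$ be a homology of period $n$ with center $P$ and axis $\mathcal{L}$, and let $g\in G$ satisfy $g(P)=P$. After conjugating I may take $h=\operatorname{diag}(1,1,\zeta_n)$, so that $P=(0:0:1)$ and $\mathcal{L}:Z=0$. Then $h':=ghg^{-1}$ is again a homology of period $n$, with center $g(P)=P$ and axis $g(\mathcal{L})$. Since a homology fixes every line through its center, $h'$ lies in the group of transformations fixing $(0:0:1)$ and each line through it; having the same nontrivial eigenvalue $\zeta_n$ it is represented by $\left(\begin{smallmatrix}1&0&0\\0&1&0\\ e&f&\zeta_n\end{smallmatrix}\right)$ for some $e,f\in K$. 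Hence
\[
h'h^{-1}=\begin{pmatrix}1&0&0\\0&1&0\\ e&f&1\end{pmatrix},\qquad (h'h^{-1})^{k}=\begin{pmatrix}1&0&0\\0&1&0\\ ke&kf&1\end{pmatrix}\quad(k\ge1).
\]
If $g(\mathcal{L})\neq\mathcal{L}$ then $(e,f)\neq(0,0)$, so $h'h^{-1}$ is a nontrivial unipotent element (an elation) of infinite order, contradicting the finiteness of $G$. Therefore $g(\mathcal{L})=\mathcal{L}$. The converse implication follows by running the identical argument in the dual projective plane, where a homology corresponds to a homology with its center and axis interchanged.

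For the first sentence of part (1) and for part (2), suppose $G$ does not leave invariant a point, a line or a triangle; by the classification above $G$ is conjugate to one of the six primitive groups, and it remains a finite verification to locate their homologies. An element of $\operatorname{PGL}_3(K)$ is a homology exactly when it has a lift to $\operatorname{GL}_3(K)$ with two distinct eigenvalues, one of multiplicity two. Reading off the eigenvalues of the elements of each primitive group from their conjugacy-class data, one finds that $\operatorname{A}_5$, $\operatorname{A}_6$, $\operatorname{PSL}(2,7)$, $\operatorname{Hess}_{36}$ and $\operatorname{Hess}_{72}$ contain no homologies at all (for instance the order-$3$ elements of the smaller Hessian groups, and the elements of order $4,5,7$ in $\operatorname{A}_5,\operatorname{A}_6,\operatorname{PSL}(2,7)$, all have three distinct eigenvalues), whereas every homology of $\operatorname{Hess}_{216}$ has period $3$. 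Consequently no primitive group contains a homology of period $n\ge4$, which proves the first sentence of part (1), and $\operatorname{Hess}_{216}$ is the only primitive group with a period-$3$ homology, which is part (2).

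The \emph{main obstacle} is the classification input used in the previous paragraph. Deriving it from scratch is precisely Mitchell's delicate geometric argument: one analyses the configuration formed by the centers and axes of the whole $G$-conjugacy class of a homology, uses part (3) to identify the stabilizer of a center with that of its axis, and shows that finiteness forces this configuration to reduce to a single point, a line, or a triangle --- except that for period $3$ one also meets the Hesse configuration of $12$ lines and $9$ inflection points underlying $\operatorname{Hess}_{216}$. I would therefore either cite \cite{Mit} for the classification and confine my own contribution to the eigenvalue bookkeeping above, or, for a self-contained treatment, carry out the two-homology analysis: determine for which mutual incidences of the two centers and two axes the group $\langle h_1,h_2\rangle$ generated by two homologies stays finite, and check that period $n\ge4$ excludes every primitive configuration.
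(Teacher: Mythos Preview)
The paper does not supply its own proof of Theorem~\ref{Mitchell1}; it is stated as a fact due to Mitchell with a bare citation to \cite{Mit}. So there is no in-paper argument to compare against, and your proposal is effectively an attempt to reconstruct Mitchell's reasoning.

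Your direct argument for part~(3) (and the duplicate second sentence of part~(1)) is clean and correct: the product $h'h^{-1}$ of two homologies with the same center is an elation, which in characteristic $0$ (or $p>21$) has infinite order unless it is the identity. This is exactly the kind of elementary proof one wants here.

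Your treatment of the global statements, however, contains a factual slip. You assert that $\operatorname{A}_5$, $\operatorname{A}_6$, $\operatorname{PSL}(2,7)$, $\operatorname{Hess}_{36}$ and $\operatorname{Hess}_{72}$ ``contain no homologies at all''. This is false: \emph{every} involution in $\operatorname{PGL}_3(K)$ is a homology of period $2$, since its eigenvalue pattern is necessarily $(1,1,-1)$ up to scalar. Concretely, $V^2=[X:Z:Y]\in\operatorname{Hess}_{36}$ and $\tau=[X:Z:Y]$ in the primitive $\operatorname{A}_5$ of Section~\ref{case514} are period-$2$ homologies. What you need (and what is actually true) is the weaker claim that these five groups contain no homologies of period $\ge 3$; this is what your parenthetical remarks about order-$3,4,5,7$ elements having three distinct eigenvalues are really establishing. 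Once you restrict the claim to periods $\ge 3$, your eigenvalue bookkeeping goes through and both the first sentence of part~(1) and part~(2) follow.

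There is also a mild circularity to flag: you invoke the list of primitive subgroups from Theorem~\ref{teoHarui}(5), which in the paper is itself imported from Mitchell's classification. Since Theorem~\ref{Mitchell1} is one of the geometric lemmas Mitchell uses \emph{en route} to that classification, citing the classification to prove the lemma is not logically independent of \cite{Mit}. You acknowledge this in your final paragraph; just be explicit that your argument is a reorganization of Mitchell's results rather than an independent proof.
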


On the other hand, there is a strong connection between having homologies inside $\Aut(C)$ and having the so-called \emph{Galois points}. To our knowledge, the notion of Galois points was first introduced by H. Yoshihara in 1996. It has been extensively studied by other mathematicians around, see for example \cite{Fuk06, Fuk08, Fuk09, Fuk13, Hom06, MY00, Yoshihara} for more details.

\begin{defn}
 A point $P\in\mathbb{P}^2(K)$ is said to be a Galois point for $C$ if the natural projection $\pi_P$ from $C$ to a line $\mathcal{L}$ with center $P$ is a Galois covering. If, moreover, $P\in C$ (respectively $P\notin C$), then $P$ is called an inner (respectively outer) Galois point.
\end{defn}

In particular, we have the following fact for smooth plane curves due to T. Harui \cite[Lemma 3.7]{Harui}:

\begin{prop}\label{d,d-1 homologies}
Let $C$ be a smooth plane curve of degree $d\geq5$ over an algebraically closed field $K$ of characteristic $0$. If $\sigma\in\Aut(C)$ is an homology with center $P$, then $|\langle\sigma\rangle|$ divides $d$ when $P\notin C$ and $|\langle\sigma\rangle|$ divides $d-1$ when $P\in C$. The equality $|\langle\sigma\rangle|=d$ (respectively $|\langle\sigma\rangle|=d-1$) holds if and only if $P$ is an outer (respectively inner) Galois point for $C$.
\end{prop}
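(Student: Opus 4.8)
The plan is to reduce everything to the geometry of the projection $\pi_P\colon C\to\mathcal{L}$ from the center $P$ of the homology onto its axis $\mathcal{L}$. First I would put $\sigma$ in its canonical form, so that after conjugating in $\PGL_3(K)$ we have $\sigma=\operatorname{diag}(1,1,\zeta_n)$ with $n=|\langle\sigma\rangle|$, center $P=(0:0:1)$ and axis $\mathcal{L}\colon Z=0$. Since $C$ is smooth, the rational projection $\pi_P$ extends to a finite morphism on all of $C$; its degree equals the number of residual intersection points of a general line through $P$ with $C$. By B\'ezout this degree is $d$ when $P\notin C$, and $d-1$ when $P\in C$ (a general line through a smooth point $P\in C$ meets $C$ transversally at $P$ and in $d-1$ further points).

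The key observation is that $\sigma$ is a covering transformation of $\pi_P$. Indeed, $\sigma$ fixes the center $P$ and fixes the axis $\mathcal{L}$ pointwise, so for $Q\in C$ the line $\overline{PQ}$ is sent to $\overline{P\sigma(Q)}$ while its intersection point with $\mathcal{L}$ is fixed; hence $\pi_P\circ\sigma=\pi_P$. Passing to function fields, $\sigma$ acts on $K(C)$ fixing the subfield $\pi_P^*K(\mathcal{L})\cong K(t)$, so $\langle\sigma\rangle$ embeds into $\Aut(K(C)/K(t))$ and acts freely on a general fibre of $\pi_P$. Therefore $n=|\langle\sigma\rangle|$ divides $[K(C):K(t)]=\deg\pi_P$, which gives $n\mid d$ when $P\notin C$ and $n\mid d-1$ when $P\in C$.

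For the equivalence I would use that a finite separable extension is Galois exactly when the order of its automorphism group equals its degree. If $n=\deg\pi_P$, then $\langle\sigma\rangle\subseteq\Aut(K(C)/K(t))$ already has order equal to $[K(C):K(t)]$, forcing $\pi_P$ to be a Galois covering with group $\langle\sigma\rangle$; by definition this says $P$ is a Galois point, outer if $P\notin C$ and inner if $P\in C$. For the converse, if $P$ is a Galois point then $\pi_P$ is Galois of degree $d$ (resp.\ $d-1$); here I would invoke the standard structure of Galois points on smooth plane curves, namely that the Galois group at such a point is cyclic and generated by a homology with center $P$, so that the maximal-order homology $\sigma$ with center $P$ realizes $|\langle\sigma\rangle|=\deg\pi_P$.

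The main technical points to watch are the degree computation, where one must use smoothness of $C$ both to extend $\pi_P$ to a morphism and to control the intersection multiplicity at $P$, and the honest formulation of the equivalence: a single homology $\sigma$ with center $P$ need only generate a subgroup of the full deck group, so the cleanest true statement is that $|\langle\sigma\rangle|$ attains its maximal admissible value $d$ (resp.\ $d-1$) precisely when $P$ is a Galois point, with $\langle\sigma\rangle$ then equal to the whole cyclic Galois group. Establishing that the Galois group at a Galois point is cyclic and generated by a homology is the one input I would take from the theory of Galois points rather than reprove from scratch here.
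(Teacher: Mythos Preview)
The paper does not supply its own proof of this proposition; it is quoted as a fact due to Harui \cite[Lemma~3.7]{Harui}. Your argument---projecting from the center $P$ onto the axis $\mathcal{L}$, observing that the homology $\sigma$ is a deck transformation of $\pi_P$, and reading off the divisibility from the free action on a general fibre---is exactly the standard route and is essentially what Harui does. The ``only if'' half of the equivalence also follows just as you say.

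Your caveat about the ``if'' direction is correct and worth keeping. As literally phrased, the proposition would assert that \emph{every} homology $\sigma$ with center $P$ already has order $d$ (resp.\ $d-1$) once $P$ is Galois, whereas $\sigma$ could a priori generate a proper subgroup of the cyclic deck group. The honest statement is the one you give: the maximal order of a homology with center $P$ equals $d$ (resp.\ $d-1$) exactly when $P$ is an outer (resp.\ inner) Galois point, and then $\langle\sigma\rangle$ coincides with the full Galois group. That is how the result is used in the paper (e.g.\ in \S\ref{case501} and the discussion of Type $6,(0,1)$), so your reformulation loses nothing. The one external input you flag---that the deck group at a Galois point of a smooth plane curve is cyclic and generated by a homology with center $P$---follows from your own observation that any deck transformation in $\PGL_3(K)$ must fix $P$ and fix $\mathcal{L}$ pointwise, so you need not import it separately.
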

\section{The automorphism group for Type $12,(1,7)$: The stratum $\mathcal{M}^{\operatorname{Pl}}_{g}(\Z/12\Z)$}
In this section, we will describe the full automorphism group of smooth plane sextic curves $C$ of Type $12, (1,7)$. That is, when $C$ is defined by an equation of the form:
$$C:X^6+Y^5Z+YZ^5+\beta_{3,3}Y^3Z^3=0,$$
for some $\beta_{3,3}\in K$. In this situation, we have $\sigma:=\operatorname{diag}(1,\zeta_{12},\zeta_{12}^7)\in\Aut(C)$ of order $12$. Also, we can assume that $\sigma$ is of maximal order in $\Aut(C)$, in particular, $\beta_{3,3}\neq0$. Otherwise, $C$ will be of Type $24, (1,19)$, which was covered before.

We obtain the following result concerning $\Aut(C)$:
\begin{prop}\label{1217}
Let $C$ be a smooth plane sextic curves $C$ of Type $12, (1,7)$ as above. Then, the full automorphism group $\operatorname{Aut}(C)$ is classified as follows.
\begin{enumerate}[1.]
  \item If $\beta_{3,3}\neq\pm\dfrac{10}{3}$, then $\operatorname{Aut}(C)=\langle\sigma,\,\tau\rangle\cong \Z/3\Z\times\operatorname{D}_{8}$, where $\sigma:=\operatorname{diag}(1,\zeta_{12},\zeta_{12}^7)$ and $\tau:=[X:Z:Y]$.
\item If $\beta_{3,3}=\pm\dfrac{10}{3}$, then $C$ is $K$-isomorphic to the Fermat curve $\mathcal{F}_6:X^6+Y^6+Z^6=0$. In this case, $\operatorname{Aut}(\mathcal{F}_6)=\langle\eta_1,\,\eta_2,\,\eta_3,\,\eta_4\rangle\cong(\Z/6\Z)^2\rtimes\operatorname{S}_3$, where $\eta_1:=[X:Z:Y],\,\eta_2:=[Y:Z:X],\,\eta_3:=\operatorname{diag}(\zeta_6,1,1)$ and $\eta_4:=\operatorname{diag}(1,\zeta_6,1)$.
\end{enumerate}
\end{prop}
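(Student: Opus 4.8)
The plan is to analyze the curve $C:X^6+Y^5Z+YZ^5+\beta_{3,3}Y^3Z^3=0$ with the cyclic automorphism $\sigma=\operatorname{diag}(1,\zeta_{12},\zeta_{12}^7)$ of maximal order $12$, assuming $\beta_{3,3}\neq0$. I would begin by locating where $C$ sits in Harui's classification (Theorem \ref{teoHarui}). Since $\sigma$ is intransitive (it fixes the point $(1:0:0)$, which does not lie on $C$ as $1\neq0$, and fixes the line $X=0$), the natural candidate is case (2): $\operatorname{Aut}(C)$ fixes the point $(1:0:0)\notin C$, giving the exact sequence $1\to N\to\operatorname{Aut}(C)\to G'\to 1$ with $N$ cyclic of order dividing $6$ and $G'\leq\operatorname{PGL}_2(K)$. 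Here the variable $B=X$, so $N$ consists of diagonal maps $\operatorname{diag}(\zeta_n,1,1)$; computing which of these preserve $C$ shows $N=1$ (any nontrivial such scaling destroys the $X^6$ term relative to the others). I would verify the other Harui cases are excluded: case (1) fails because $\sigma$ does not fix a point on $C$; cases (3),(4),(5) would have to be ruled out either by order considerations ($|\operatorname{Aut}(C)|$ dividing $6d^2=216$ or $3(d^2-3d+3)=63$ is consistent with case (2) anyway, so the real work is pinning down $G'$) or by showing $C$ is not a descendant of the Fermat/Klein curve for generic $\beta_{3,3}$.

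\textbf{Identifying $G'$ and assembling the group.}

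With $N=1$, we get $\operatorname{Aut}(C)\hookrightarrow G'\leq\operatorname{PGL}_2(K)$ via $\Lambda$, and $\sigma$ maps to $\operatorname{diag}(\zeta_{12},\zeta_{12}^7)=\zeta_{12}^{7}\cdot\operatorname{diag}(\zeta_{12}^{-6},1)$, which has order $12$ in $\operatorname{PGL}_2(K)$; thus $G'$ contains a cyclic group of order $12$. By Harui's constraints on $G'$ (cyclic $\Z/m\Z$ with $m\leq d-1=5$, dihedral $\operatorname{D}_{2m}$ with $m\mid d-2=4$, or $\operatorname{A}_4,\operatorname{A}_5,\operatorname{S}_4$), an order-$12$ element forces $G'$ to be either $\operatorname{A}_4$ or a group containing $\Z/12\Z$ — but $\operatorname{A}_4,\operatorname{A}_5,\operatorname{S}_4$ have no element of order $12$, and the dihedral/cyclic bounds are too small. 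This apparent contradiction is resolved by recognizing that the image of $\sigma$ in $\operatorname{PGL}_2(K)$ actually has smaller order: since $\sigma^{?}$ may act trivially on $\{Y,Z\}$ only up to scalar, I must recompute the true order of $\Lambda(\sigma)$ carefully, and the factor $\Z/3\Z$ appearing in the answer comes precisely from the kernel direction being nontrivial. So the corrected picture is that a power of $\sigma$ (namely $\sigma^4=\operatorname{diag}(1,\zeta_3,\zeta_3^{-1})$, central) generates a central $\Z/3\Z$, while $\sigma$ modulo this factor together with $\tau=[X:Z:Y]$ generates the $\operatorname{D}_8$. I would then check directly that $\tau$ preserves $C$ (it swaps $Y,Z$, fixing $Y^5Z+YZ^5$ and $Y^3Z^3$), compute the relations $\sigma^{12}=\tau^2=1$ and $\tau\sigma\tau^{-1}$, and confirm the extension splits as $\Z/3\Z\times\operatorname{D}_8$, matching \texttt{GAP(24,?)}.

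\textbf{The degenerate values and the main obstacle.}

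The special values $\beta_{3,3}=\pm\tfrac{10}{3}$ must be where $C$ acquires extra symmetry and becomes $K$-isomorphic to the Fermat sextic. To establish this I would exhibit an explicit coordinate change $\phi\in\operatorname{PGL}_3(K)$ transforming $X^6+Y^5Z+YZ^5\pm\tfrac{10}{3}Y^3Z^3$ into $X^6+Y^6+Z^6$ (up to scalar); the natural guess diagonalizes the binary sextic $Y^5Z+YZ^5\pm\tfrac{10}{3}Y^3Z^3$ in the variables $Y,Z$ by sending $(Y,Z)\mapsto(Y+\lambda Z,\mu(Y-\lambda Z))$ for suitable $\lambda,\mu$, and the value $\pm\tfrac{10}{3}$ is exactly the coefficient for which the resulting binary form becomes a perfect sum $(\text{new }Y)^6+(\text{new }Z)^6$. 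Then the automorphism group is read off from Proposition \ref{1217}-case of the Fermat curve (equivalently Theorem \ref{BB}-(3) with $d=6$, since $\mathcal{F}_6\cong$ the Type $24$ curve). \textbf{The main obstacle} I anticipate is the bookkeeping around the order of $\Lambda(\sigma)$ and disentangling the central $\Z/3\Z$ from $G'$: Harui's bound $m\leq d-1$ on the cyclic part of $G'$ seems to clash with $\sigma$ having order $12$, and the resolution requires carefully tracking the scalar kernel $K^*$ so that the $12$ factors as $3\times 4$ with the $3$ landing in the center and only a $\Z/4\Z$ (extended to $\operatorname{D}_8$ by $\tau$) living in $G'$. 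Pinning down the precise value $\pm\tfrac{10}{3}$ via the discriminant/diagonalization condition is a finite computation but is the other place where care is needed.
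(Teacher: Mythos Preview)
Your computation of $N$ is wrong, and this derails the rest of the argument. You claim $N=1$ because ``any nontrivial such scaling destroys the $X^6$ term relative to the others,'' but $\operatorname{diag}(\lambda,1,1)$ sends $X^6$ to $\lambda^6 X^6$ and fixes the remaining terms, so it preserves $C$ precisely when $\lambda^6=1$. Hence $N\cong\Z/6\Z$; in fact $N=\langle\sigma^2\rangle$, since in $\operatorname{PGL}_3(K)$ one has $\sigma^2=\operatorname{diag}(1,\zeta_{12}^2,\zeta_{12}^{14})=\operatorname{diag}(1,\zeta_6,\zeta_6)=\operatorname{diag}(\zeta_6^{-1},1,1)$. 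Consequently $\Lambda(\sigma)=\operatorname{diag}(\zeta_{12},\zeta_{12}^7)=\operatorname{diag}(1,-1)$ has order~$2$ in $\operatorname{PGL}_2(K)$, not order~$12$ or~$4$, and $\Lambda(\langle\sigma,\tau\rangle)\cong(\Z/2\Z)^2$. The ``apparent contradiction'' you try to resolve is an artifact of the miscomputed $N$; once $N=\Z/6\Z$ is in place, Harui's constraint that $G'$ contain $(\Z/2\Z)^2$ forces $G'\in\{(\Z/2\Z)^2,\operatorname{A}_4,\operatorname{S}_4,\operatorname{A}_5\}$, and the paper disposes of the last three by producing an order-$3$ element of the normalizer of $(\Z/2\Z)^2$ and checking it never preserves~$C$.

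There is also a structural gap. The paper's logic is a dichotomy: after ruling out the primitive groups, the Klein descendant, and the cyclic case, \emph{either} $\operatorname{Aut}(C)$ fixes a point off~$C$ (and then the exact-sequence analysis shows $\operatorname{Aut}(C)=\langle\sigma,\tau\rangle$ with no exceptions), \emph{or} $C$ is a descendant of $\mathcal{F}_6$. The special values $\beta_{3,3}=\pm\tfrac{10}{3}$ arise entirely from the second branch: one conjugates $\sigma$ to a fixed order-$12$ element of $\operatorname{Aut}(\mathcal{F}_6)$, writes down the resulting $\phi$, and reads off the condition $3-16ce^6=0$ for the transformed curve to have larger automorphism group. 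Your plan folds both branches into the exact-sequence case and hopes to recover $\pm\tfrac{10}{3}$ by diagonalizing the binary sextic $Y^5Z+YZ^5+\beta_{3,3}Y^3Z^3$; that computation can indeed detect when $C\cong\mathcal{F}_6$, but it does not by itself prove that $\operatorname{Aut}(C)=\Z/3\Z\times\operatorname{D}_8$ for all other $\beta_{3,3}$ --- you still need the Fermat-descendant analysis (or an equivalent) to exclude intermediate enlargements.
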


\begin{proof}
Non of the finite primitive subgroups of $\operatorname{PGL}_3(K)$ mentioned in Theorem \ref{teoHarui}-(5) has elements of order $12$. Actually, non of them has elements of order $>6$ except the Klein group $\operatorname{PSL}(2,7)$ which has some elements of order $7$. Also, $12\nmid\Aut(\mathcal{K}_6)$, so $C$ is not a descendant of the Klein sextic curve $\mathcal{K}_6$. Moreover, $\operatorname{Aut}(C)$ is not cyclic as $$\langle\sigma,\tau:\sigma^{12}=\tau^2=1,\,\tau\sigma\tau=\sigma^{7}\rangle\cong \Z/3\Z\times\operatorname{D}_{8}=\operatorname{GAP}(24,10)$$ is a non-cyclic subgroup of automorphisms. Therefore, $\Aut(C)$ fixes a point $P$ not lying on $C$ or $C$ is a descendant of the Fermat sextic curve $\mathcal{F}_6$.

Suppose first that $\operatorname{Aut}(C)$ fixes a line $\mathcal{L}$ in $\mathbb{P}_2(K)$ and a point $P\notin C$ off this line. Since $\sigma$ and $\tau$ are automorphisms, then $\mathcal{L}$ should be the reference line $X=0$ and $P$ the reference point $(1:0:0)$. In particular, by Theorem \ref{teoHarui}-(2), all automorphisms of $C$ are intransitive of the shape
     $
      \left(
        \begin{array}{ccc}
          1 & 0 & 0 \\
          0 & * & * \\
          0 & * & * \\
        \end{array}
      \right).      $
Moreover, we can think about $\Aut(C)$ in a short exact sequence $1\rightarrow N\rightarrow \Aut(C)\rightarrow\Lambda(\Aut(C))\rightarrow 1,$ where $N=\langle\sigma^2\rangle$ by Remark \ref{extensionrem}, and $\Lambda(\Aut(C))$ contains the Klein $4$-group $(\Z/2\Z)^2$ generated by $\Lambda(\sigma)=\operatorname{diag}(\zeta_{12},-\zeta_{12})$ and $\Lambda(\tau)=[Z:Y]$. Thus $\Lambda(\Aut(C))$ should be $(\Z/2\Z)^2,\,\operatorname{A}_4,\,\operatorname{A}_5$ or $\operatorname{S}_4$. If $\Lambda(\Aut(C))=\operatorname{A}_4,\,\operatorname{A}_5$ or $\operatorname{S}_4$, then the group structure of $\operatorname{A}_4$ confirms that there must be an automorphism $\eta\notin\langle\sigma,\tau\rangle$ of $C$ such that $\Lambda(\eta)$ has order $3$, $\Lambda(\eta)\Lambda(\sigma)\Lambda(\eta)^{-1}=\Lambda(\sigma)\Lambda(\tau)$ and $\Lambda(\eta)\Lambda(\tau)\Lambda(\eta)^{-1}=\Lambda(\sigma)$. This implies that $\eta=\left(
        \begin{array}{ccc}
          1 & 0 & 0 \\
          0 & \nu & \pm\nu \\
          0 & -i\nu & \pm i\nu \\
        \end{array}
      \right)$ or $\eta=\left(
        \begin{array}{ccc}
          1 & 0 & 0 \\
          0 & \nu & \pm\nu \\
          0 & i\nu & \mp i\nu \\
        \end{array}
      \right)$
for some $\nu\in K^*$, however, it does not preserve the defining equation for $C$. Consequently, no more automorphisms for $C$ appear in this case.

Secondly, assume that $C$ is a descendant of $\mathcal{F}_6$. We note that $\Aut(\mathcal{F}_6)\cong\operatorname{GAP}(216,92)=(\Z/6\Z)^2\rtimes\operatorname{S}_3$, $4^{\operatorname{th}}$ semidirect product of $(\Z/6\Z)^2$ and $\operatorname{S}_3$ acting faithfully (see \href{https://people.maths.bris.ac.uk/~matyd/GroupNames/193/e15/S3byC6%5E2.html#s4}{semidirect products of $(\Z/6\Z)^2$ and $\operatorname{S}_3$}).
More precisely, it is generated by $\eta_1,\,\eta_2,\,\eta_3$, and $\eta_4$ of orders $2, 3, 6, 6$ such that $$(\eta_1\eta_2)^2=(\eta_1\eta_3)(\eta_3\eta_1)^{-1}=(\eta_3\eta_4)(\eta_4\eta_3)^{-1}=\eta_1\eta_4\eta_1(\eta_3\eta_4)^{-5}=\eta_2\eta_3\eta_2^{-1}(\eta_3\eta_4)^{-5}=1.$$
By assumption, $\phi^{-1}\,\Aut(C)\,\phi$ is a subgroup of $\Aut(\mathcal{F}_6)$ for some $\phi\in\PGL_3(K)$. There is no loss of generality to assume that $\phi^{-1}\,\sigma\,\phi=[X:\zeta_6\,Z:Y]$ because all automorphisms of $\mathcal{F}_6$ of order $12$ are $\Aut(\mathcal{F}_6)$-conjugated. This reduces $\phi$ to be of the shape:
$$
\phi=\left(
      \begin{array}{ccc}
        1 & 0 & 0 \\
        0 & e & e\,\zeta_{12} \\
        0 & r & -r\,\zeta_{12} \\
      \end{array}
    \right),
$$
for some $e,\,r\in K^*$ such that $\beta_{3,3}=\dfrac{1-er\left(e^4+r^4\right)}{(er)^3}$. Since $\phi^{-1}\tau\phi$ is an involution for both $^{\phi}C$ and $\mathcal{F}_6$, then $e^4-r^4=0$ (so $r=ce$ for some $c$ such that $c^4=1$) and the transformed equation $^{\phi}C$ is
$$
X^6+Y^6+Z^6+(3-16ce^6)\zeta_{12}^4\left(Y^2Z^4+\zeta_{12}^4Y^4Z^2\right)=0,
$$

Accordingly, $^{\phi}C$ admits a larger automorphism group than $\phi^{-1}\langle\sigma,\tau\rangle\phi$ if and only if $3-16ce^6=0$. Equivalently, $\beta_{0,3}=\dfrac{10}{3c^2}=\pm\dfrac{10}{3}$), which was to be shown.
\end{proof}

%
\section{The automorphism group for Type $8,(1,3)$: The stratum $\mathcal{M}^{\operatorname{Pl}}_{g}(\Z/8\Z)$}
In this section, we will describe the full automorphism group of smooth plane sextic curves $C$ of Type $8, (1,3)$. That is, when $C$ is defined by an equation of the form:
$$C:X^6+Y^5Z+YZ^5+\beta_{4,2}X^2Y^2Z^2=0,$$
for some $\beta_{4,2}\in K$. In this situation, we have $\sigma:=\operatorname{diag}(1,\zeta_{8},\zeta_{8}^3)\in\Aut(C)$ of order $8$. We further can assume that $\sigma$ has maximal order or $C$ will be of Type $24, (1,19)$. In particular, $\beta_{4,2}\neq0$.

In this direction, we have:
\begin{prop}\label{813}
Let $C$ be a non-singular plane sextic curves $C$ of Type $8, (1,3)$ as above. Then, $\operatorname{Aut}(C)=\langle\sigma,\,\tau\rangle\cong\operatorname{SD}_{16}$, a quasidihedral group, where $\sigma:=\operatorname{diag}(1,\zeta_{8},\zeta_{8}^3)$ and $\tau:=[X:Z:Y]$.
\end{prop}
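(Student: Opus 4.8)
The plan is to follow the same strategy used in Proposition \ref{1217}, invoking Harui's classification (Theorem \ref{teoHarui}) to pin down which of the five cases $\operatorname{Aut}(C)$ can fall into, and then eliminating all possibilities larger than $\langle\sigma,\tau\rangle\cong\operatorname{SD}_{16}$. First I would verify that $\tau=[X:Z:Y]$ genuinely is an automorphism of $C$: since $\tau$ swaps $Y\leftrightarrow Z$, it fixes $X^6$, sends $Y^5Z+YZ^5$ to itself, and fixes the symmetric term $X^2Y^2Z^2$, so $\tau\in\Aut(C)$. I would then confirm the relation $\tau\sigma\tau=\sigma^3$ (equivalently check how conjugation by $\tau$ acts on $\operatorname{diag}(1,\zeta_8,\zeta_8^3)$), which identifies $\langle\sigma,\tau\rangle$ with the semidihedral group $\operatorname{SD}_{16}=\operatorname{GAP}(16,8)$ of order $16$.

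Next I would rule out the primitive cases and the Klein descendant exactly as before: none of the finite primitive subgroups in Theorem \ref{teoHarui}-(5) contains an element of order $8$ (indeed their element orders are bounded by $6$, except for the order-$7$ elements of $\operatorname{PSL}(2,7)$), and $8\nmid|\operatorname{Aut}(\mathcal{K}_6)|=|\operatorname{GAP}(63,3)|=63$, so $C$ is not a descendant of the Klein sextic. Because $\operatorname{SD}_{16}$ is non-cyclic, case (1) of Theorem \ref{teoHarui} is also excluded. This leaves two possibilities: either $\Aut(C)$ fixes a point off $C$ (case 2), or $C$ is a descendant of the Fermat sextic $\mathcal{F}_6$ (case 3).

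For the intransitive case (2), the reference point must be $(1:0:0)$ with axis $X=0$, and I would study the short exact sequence $1\to N\to\Aut(C)\to\Lambda(\Aut(C))\to 1$ with $N=\langle\sigma^4\rangle\cong\Z/2\Z$ (the homology $\operatorname{diag}(1,-1,-1)$ acting on $X$), following Remark \ref{extensionrem}. Here $\Lambda(\sigma)$ and $\Lambda(\tau)$ generate a dihedral group $\operatorname{D}_8$ of order $8$ inside $\operatorname{PGL}_2(K)$, so $\Lambda(\Aut(C))$ is $\operatorname{D}_8$, $\operatorname{A}_4$, $\operatorname{S}_4$, or $\operatorname{A}_5$. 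The main work is to show no proper enlargement survives: if $\Lambda(\Aut(C))$ contained $\operatorname{A}_4$ there would have to be an automorphism $\eta$ with $\Lambda(\eta)$ of order $3$ satisfying prescribed conjugation relations, and I would write the general matrix form of such an $\eta$ and check directly that it fails to preserve the defining equation $X^6+Y^5Z+YZ^5+\beta_{4,2}X^2Y^2Z^2$ for any $\beta_{4,2}\neq0$, forcing $\Lambda(\Aut(C))=\operatorname{D}_8$ and hence $|\Aut(C)|=16$.

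The genuinely delicate step, and the one I expect to be the main obstacle, is the Fermat-descendant case (3). There I would fix $\phi\in\PGL_3(K)$ with $\phi^{-1}\Aut(C)\phi\leq\Aut(\mathcal{F}_6)$, normalize $\phi^{-1}\sigma\phi$ to a standard order-$8$ element of $\operatorname{Aut}(\mathcal{F}_6)$ (using that all such elements are conjugate), solve for the constrained shape of $\phi$, and then impose that $\phi^{-1}\tau\phi$ be an involution of $\mathcal{F}_6$. This pins down the transformed equation ${}^{\phi}C$ and expresses $\beta_{4,2}$ as an explicit rational function of the entries of $\phi$. I would then argue that for $\beta_{4,2}\neq 0$ the curve either is not smooth or does not actually lie in the Fermat stratum beyond the $\operatorname{SD}_{16}$ already accounted for; the subtlety is to confirm that no value of $\beta_{4,2}$ makes ${}^{\phi}C$ projectively equivalent to $\mathcal{F}_6$ (or to a curve with strictly larger symmetry), so that case (3) contributes nothing new. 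Once both cases are closed, I conclude $\Aut(C)=\langle\sigma,\tau\rangle\cong\operatorname{SD}_{16}$.
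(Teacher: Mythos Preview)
Your overall strategy is sound, but you are missing a one-line observation that eliminates the Fermat-descendant case entirely and makes your ``genuinely delicate step'' unnecessary. The paper simply notes that $|\operatorname{SD}_{16}|=16$ does not divide $|\Aut(\mathcal{F}_6)|=216$, so $\Aut(C)$ cannot be conjugate to a subgroup of $\Aut(\mathcal{F}_6)$; hence case (3) of Theorem \ref{teoHarui} is ruled out immediately, by exactly the same divisibility argument you used for the Klein curve. There is no need to construct $\phi$, normalize $\sigma$, or track how $\beta_{4,2}$ transforms.

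With case (3) gone, only the intransitive case (2) remains, and here too you can sharpen your argument: since $\Lambda(\Aut(C))$ already contains a copy of $\operatorname{D}_8$, and neither $\operatorname{A}_4$ nor $\operatorname{A}_5$ contains $\operatorname{D}_8$ (their Sylow $2$-subgroups have order $4$), the only possible strict enlargement among the admissible $\PGL_2(K)$-subgroups is $\operatorname{S}_4$. The paper then looks for an order-$3$ element $\Lambda(\eta)$ normalizing the Klein four-subgroup inside this $\operatorname{D}_8$, writes down its explicit matrix form (via \cite[Lemma 2.2.3-(b)]{Hugg1}), and checks that it cannot preserve $Y^5Z+YZ^5+\beta_{4,2}X^2Y^2Z^2$ (for instance because a $Z^6$ term appears in the transformed equation). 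This is essentially what you sketched, just with the target narrowed from $\operatorname{A}_4$/$\operatorname{S}_4$/$\operatorname{A}_5$ down to $\operatorname{S}_4$ alone.
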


\begin{proof}
The group $\langle\sigma,\tau:\sigma^{8}=\tau^2=1,\,\tau\sigma\tau=\sigma^{3}\rangle\cong \operatorname{SD}_{16}=\operatorname{GAP}(16,8)$ is always a subgroup of automorphisms of order $16$. This implies that $\operatorname{Aut}(C)$ is not one of the finite primitive subgroups of $\operatorname{PGL}_3(K)$ mentioned in Theorem \ref{teoHarui}-(5) as it has elements of order $8>7$, not cyclic as $\operatorname{SD}_{16}$ non-cyclic, and $C$ is not a descendant of the Klein curve $\mathcal{K}_6$ or the Fermat curve $\mathcal{F}_6$ as $16\nmid|\Aut(\mathcal{K}_6)|,\,|\Aut(\mathcal{F}_6)|$. Thus, we have no choice except that $\operatorname{Aut}(C)$ fixes the line $\mathcal{L}: X=0$ in $\mathbb{P}_2(K)$ and the point $(1:0:0)$ off this line not lying on $C$. In particular, there is a short exact sequence $1\rightarrow N\rightarrow \Aut(C)\rightarrow\Lambda(\Aut(C))\rightarrow 1,$ where $N=\langle\sigma^4\rangle$ by Remark \ref{extensionrem}, and $\Lambda(\Aut(C))$ contains the dihedral group $\operatorname{D}_8$ generated by $\Lambda(\sigma)=\operatorname{diag}(\zeta_8,\zeta_8^3)$ and $\Lambda(\tau)=[Z:Y]$. Therefore, if $\Lambda(\Aut(C))$ is strictly bigger than $\operatorname{D}_8$, then it must be $\operatorname{S}_4$ by Theorem \ref{3.3}-(2).
Since $\Lambda(H):=\langle\operatorname{diag}(\zeta_4,-\zeta_4),[Z:Y]\rangle=(\Z/2\Z)^2$ is normal in $\operatorname{S}_4$, we only need to determine the conditions under which $C$ has an automorphism $\eta$ such that $\Lambda(\eta)$ is of order $3$ and belongs to the normalizer of $\Lambda(H)$. By \cite[Lemma 2.2.3-(b)]{Hugg1}, we obtain
$$
\Lambda(\eta)=\left(
                \begin{array}{cc}
                  \zeta_4^\nu & -\zeta_4^{\nu+\nu'} \\
                  1 & \zeta_4^{\nu'} \\
                \end{array}
              \right),
$$
  for some $\nu,\nu'\in\{0,1,2,3\}$. However, the polynomial $Y^5Z+YZ^5+\beta_{4,2}X^2Y^2Z^2$ is not invariant under the action of $\Lambda(\eta)$. For example, the transformed equation for $C$ will contain $Z^6$, a contradiction. Consequently, $C$ admits no more automorphisms, which was to be shown.
\end{proof}


\section{The automorphism group for Type $7,(1,3)$: The stratum $\mathcal{M}^{\operatorname{Pl}}_{g}(\Z/7\Z)$}
In this section, we will describe the full automorphism group of smooth plane sextic curves $C$ of Type $7, (1,3)$. That is, when $C$ is defined by an equation of the form:
$$C:X^5Y+Y^5Z+XZ^5+\beta_{4,2}X^2Y^2Z^2=0,$$
for some $\beta_{4,2}\in K$. We further can assume that $\sigma:=\operatorname{diag}(1,\zeta_{7},\zeta_{7}^3)$ is an automorphism of maximal order $8$. In particular, $\beta_{4,2}\neq0$ or $C$ will be of Type $21, (1,17)$.

In this case, $\Aut(C)$ is determined by the next result.
\begin{prop}\label{713}
Let $C$ be a smooth plane sextic curves $C$ of Type $7, (1,3)$ as above. Then, the full automorphism group $\operatorname{Aut}(C)$ is classified as follows.
\begin{enumerate}[1.]
  \item If $\beta_{4,2}\neq-5$, then $\operatorname{Aut}(C)=\langle\sigma,\,\tau\rangle\cong\Z/7\Z\rtimes\Z/3\Z$, the semidirect product of $\Z/7\Z$ and $\Z/3\Z$ acting faithfully, where Let $\tau:=[Y:Z:X]$.

  \item If $\beta_{4,2}=-5$, then $\operatorname{Aut}(C)=\langle\sigma,\tau,\eta\rangle\cong\operatorname{PSL}(2,7)$, where $\eta$ is the involution $$
      \left(
  \begin{array}{ccc}
    \zeta_7-\zeta_7^6 & \zeta_7^2-\zeta_7^5 & \zeta_7^4-\zeta_7^3 \\
    \zeta_7^2-\zeta_7^5 & \zeta_7^4-\zeta_7^3 & \zeta_7-\zeta_7^6 \\
    \zeta_7^4-\zeta_7^3 & \zeta_7-\zeta_7^6 & \zeta_7^2-\zeta_7^5 \\
  \end{array}
\right)
  .$$
\end{enumerate}
\end{prop}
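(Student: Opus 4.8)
The plan is to rerun the Harui-type elimination used in the previous sections and then isolate the single value of $\beta_{4,2}$ at which the group jumps. First I would record that $\tau=[Y:Z:X]$ preserves $F_C$: it cyclically permutes the three monomials $X^5Y,\,Y^5Z,\,XZ^5$ among themselves and fixes $X^2Y^2Z^2$, so $\tau\in\operatorname{Aut}(C)$ has order $3$. A direct conjugation gives $\tau\sigma\tau^{-1}=\sigma^2$ (read off from the permuted diagonal of $\sigma=\operatorname{diag}(1,\zeta_7,\zeta_7^3)$), so $\langle\sigma,\tau\rangle$ is the nonabelian Frobenius group $\Z/7\Z\rtimes\Z/3\Z=\operatorname{GAP}(21,1)$ of order $21$. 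In particular $\operatorname{Aut}(C)$ is noncyclic and contains an element of order $7$, and the core of $F_C$ is the Klein polynomial $X^5Y+Y^5Z+XZ^5$, so $(C,\langle\sigma,\tau\rangle)$ is a descendant of $\mathcal{K}_6$.

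Next I would feed these facts into Theorem \ref{teoHarui}. Noncyclicity kills case (1). In case (2) every element lies in an extension of $N$ (cyclic, order dividing $6$) by $G'$, with $G'$ cyclic of order $\leq5$, dihedral, $\operatorname{A}_4$, $\operatorname{A}_5$ or $\operatorname{S}_4$; none of these orders is divisible by $7$, so an order-$7$ element cannot occur and case (2) is out. Case (3) is excluded since $7\nmid|\operatorname{Aut}(\mathcal{F}_6)|=6\cdot6^2=216$. This leaves case (4) ($C$ a descendant of $\mathcal{K}_6$) and case (5) ($\operatorname{Aut}(C)$ primitive).

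In the primitive case only $\operatorname{PSL}(2,7)$ survives, since $|\operatorname{A}_5|=60$, $|\operatorname{A}_6|=360$ and $|\operatorname{Hess}_*|\in\{36,72,216\}$ are all prime to $7$. In the descendant case I would invoke $\operatorname{Aut}(\mathcal{K}_6)\cong\Z/21\Z\rtimes\Z/3\Z$ of order $63$ (Corollary \ref{verylarge}): a subgroup containing $\langle\sigma,\tau\rangle$ has order $21$ or $63$, and the order-$63$ group contains an element of order $21$, contradicting the maximality of $\sigma$. Equivalently, the only enlarging candidate is $a=\operatorname{diag}(1,\zeta_3,\zeta_3^2)$, which scales the core by $\zeta_3$ but fixes $X^2Y^2Z^2$ and hence cannot preserve $C$ once $\beta_{4,2}\neq0$. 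Thus $\operatorname{Aut}(C)$ is either $\Z/7\Z\rtimes\Z/3\Z$ or $\operatorname{PSL}(2,7)$.

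It remains to locate the boundary. Since $\langle\sigma,\tau\rangle$ is already the normalizer of the Sylow-$7$ subgroup inside $\operatorname{PSL}(2,7)$, the group enlarges precisely when $C$ admits the extra involution $\eta$ of the statement, the symmetric Gaussian-period matrix with entries $\zeta_7^j-\zeta_7^{7-j}$, which together with $\sigma,\tau$ realizes the standard $3$-dimensional representation of $\operatorname{PSL}(2,7)$. The decisive step is to impose $\eta^{*}F_C=\lambda F_C$ and solve for $\beta_{4,2}$; carrying out this degree-$6$ substitution and simplifying with the cyclotomic identities for the quadratic periods, notably $(\zeta_7-\zeta_7^6)+(\zeta_7^2-\zeta_7^5)+(\zeta_7^4-\zeta_7^3)=\sqrt{-7}$ together with the analogous relations for their pairwise products, should force $\beta_{4,2}=-5$. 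Conversely, at $\beta_{4,2}=-5$ one checks from the defining relations that $\langle\sigma,\tau,\eta\rangle\cong\operatorname{PSL}(2,7)=\operatorname{GAP}(168,42)$, so $\operatorname{Aut}(C)=\operatorname{PSL}(2,7)$, while for $\beta_{4,2}\neq-5$ no enlargement occurs and $\operatorname{Aut}(C)=\Z/7\Z\rtimes\Z/3\Z$. I expect this final invariance computation to be the main obstacle: the entries of $\eta$ are algebraic combinations of $7$th roots of unity, so the expansion of $\eta^{*}F_C$ is heavy and the whole simplification rests on keeping the cyclotomic identities under control; once $\beta_{4,2}=-5$ is singled out, the two cases of the proposition follow at once.
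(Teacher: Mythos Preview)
Your Harui-type elimination is correct and essentially matches the paper's: noncyclicity, no element of order $7$ in any group arising in case (2), $7\nmid 216$, and the maximality-of-$\sigma$ argument for the Klein-descendant case all go through. The paper rules out case (2) slightly differently (observing that $\sigma$ and $\tau$ share no fixed point in $\mathbb{P}^2$), but your order argument is equally valid.

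The substantive divergence, and the place where your proposal has a gap, is the $\operatorname{PSL}(2,7)$ step. You propose to test the \emph{single} involution $\eta$ of the statement and solve $\eta^{*}F_C=\lambda F_C$ for $\beta_{4,2}$. This certainly shows that $\beta_{4,2}=-5$ gives $\operatorname{Aut}(C)=\operatorname{PSL}(2,7)$, but it does \emph{not} show the converse: if $\beta_{4,2}\neq-5$ and this particular $\eta$ fails, you have not excluded that some \emph{other} copy of $\operatorname{PSL}(2,7)$ inside $\operatorname{PGL}_3(K)$ contains $\langle\sigma,\tau\rangle$. Your sentence ``the group enlarges precisely when $C$ admits the extra involution $\eta$ of the statement'' asserts exactly this uniqueness without justification. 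You can repair it by showing $N_{\operatorname{PGL}_3(K)}(\langle\sigma,\tau\rangle)=\langle\sigma,\tau\rangle$ (the eigenvalue set $\{1,\zeta_7,\zeta_7^3\}$ of $\sigma$ is not preserved under multiplication by $3,5,6$ mod $7$, so the outer automorphism of $F_{21}$ is not realized) and then invoking that all copies of $\operatorname{PSL}(2,7)$ in $\operatorname{PGL}_3(K)$ are conjugate; or alternatively by quoting that the space of $\operatorname{PSL}(2,7)$-invariant sextics is one-dimensional.

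The paper avoids this issue entirely by working the other way: since the normalizer of $\langle\tau\rangle$ in $\operatorname{PSL}(2,7)$ is an $\operatorname{S}_3$, any enlargement forces an involution $\eta$ with $\eta\tau\eta=\tau^{-1}$; it then parametrizes \emph{all} such $\eta\in\operatorname{PGL}_3(K)$ as a two-parameter family $\eta_\nu(a,b)$ with $\nu\in\{0,1,2\}$, imposes invariance of $F_C$, and solves the resulting polynomial system (with computer assistance) to force $\beta_{4,2}=-5$ in every case. This is heavier computationally than your plan, but it is logically complete without any appeal to uniqueness of the embedding.
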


\begin{proof}
Since $\langle\sigma,\tau:\sigma^{7}=\tau^3=1,\,\tau\sigma\tau^{-1}=\sigma^{4}\rangle\cong\Z/7\Z\rtimes\Z/3\Z=\operatorname{GAP}(21,1)$ is a subgroup of automorphisms of order $21$, then $\operatorname{Aut}(C)$ is not cyclic, it does not leave invariant a point in $\mathbb{P}_2(K)$ as $\sigma$ and $\tau$ do not share any fixed points, not one of the finite primitive subgroups of $\operatorname{PGL}_3(K)$ except possibly the Klein group $\operatorname{PSL}(2,7)$, and $C$ is not a descendant of the Fermat curve $\mathcal{F}_6$. Moreover, if $C$ is a descendant of the Klein curve $\mathcal{K}_6$, then it does not admit a bigger automorphism group unless it is $K$-isomorphic to the Klein curve itself as $\Z/7\Z\rtimes\Z/3\Z$ is a maximal subgroup of $\Aut(\mathcal{K}_6)$. This is not allowed by the assumption that automorphisms of $C$ have orders $\leq7$ because $\mathcal{K}_6$ has automorphisms of order $21$.

Now, assume that $\Aut(C)$ is $\PGL_3(K)$-conjugate to $\operatorname{PSL}(2,7)$ for some specializations of the parameter $\beta_{4,2}$. Because the centralizer of $\langle\tau\rangle$ inside $\operatorname{PSL}(2,7)$ 
is $\Z/3\Z$ and its normalizer is $\operatorname{S}_3$, there must be an involution $\eta$ of $C$ such that $\eta\tau\eta=\tau^{-1}$. Write $
\eta=\left(
       \begin{array}{ccc}
         a & b & c \\
         d & e & f \\
         g & h & r \\
       \end{array}
     \right),
$
then the condition $\eta\tau\eta=\tau^{-1}$ reads as
$
\left(
       \begin{array}{ccc}
         c & a & b \\
         f & d & e \\
         r & g & h \\
       \end{array}
     \right)=\lambda\left(
                      \begin{array}{ccc}
                        g & h & r \\
                        a & b & c \\
                        d & e & f \\
                      \end{array}
                    \right)
$
for some $\lambda\in K^*$. This holds only if $\lambda^3=1$,
$$
\eta=\left(
  \begin{array}{ccc}
    a & b & c \\
    \zeta_3^\nu b & \zeta_3^\nu c & \zeta_3^\nu a \\
    \zeta_3^{2\nu}c & \zeta_3^{2\nu}a & \zeta_3^{2\nu}b \\
  \end{array}
\right),
$$
for some $\nu\in\{0,1,2\}$. Because $\eta^2=1$, we further obtain $c=-ab\zeta_3^{2\nu}/(\zeta_3^{\nu}a+b)$ such that $ab(\zeta_3^{\nu}a+b)\neq0$ (otherwise, $\eta$ reduces to $X\leftrightarrow Z$ which is never an automorphism for $C$). Thus,
$$
\eta_{\nu}=\left(
  \begin{array}{ccc}
    a & b & -ab\zeta_3^{2\nu}/(\zeta_3^{\nu}a+b) \\
    \zeta_3^\nu b & -ab/(\zeta_3^{\nu}a+b) & a\zeta_3^\nu \\
    -ab\zeta_3^{\nu}/(\zeta_3^{\nu}a+b) & \zeta_3^{2\nu}a & \zeta_3^{2\nu}b \\
  \end{array}
\right),
$$
For invertibility, we need to impose $a^2+\zeta_3^{\nu}ab+\zeta_3^{2\nu}b^2\neq0$. If $\eta=\eta_0$, then the transformed equation $^{\eta_0}C$ would contain $Z^6$ unless $\beta_{4,2}=\dfrac{a^5b^4 + b^5(a+ b)^4-a^4(a+b)^5}{a^3b^3(a+b)^3}$. Moreover, we must eliminate the coefficients of $X^5Z,\,XY^5,$ and $Y^2Z^4$ in $^{\eta_0}C$. Accordingly,
\begin{eqnarray*}
  a^6+a^5b-5a^4b^2-5a^3b^3+5a^2b^4+5ab^5+b^6&=& 0, \\
  (a^3-3ab^2- b^3)(a^3+a^2b-2ab^2-b^3)&=& 0, \\
  (a^6-a^5b-12a^4b^2-9a^3b^3+8a^2b^4+7ab^5+b^6)(a^3+a^2b-2ab^2-b^3)&=& 0.
\end{eqnarray*}
Using MATHEMATICA, we can see that the above system holds only if $\beta_{4,2}=-5$. The work of Klein \cite{klein1879b} (see also \cite{Elkies}) assures that $a=\zeta_7-\zeta_7^6$ and $b=\zeta_7^2-\zeta_7^5$ fill the job and we are done.

We can handle the situation for $^{\eta_1}C$ and $^{\eta_2}C$ in a similar fashion.
\end{proof}


\section{The automorphism group that contain homologies of period $\geq3$}
In this section we will study $\Aut(C)$ when $C$ admits an homology of period $\geq3$. By Proposition \ref{maximalordercyclic}, $C$ is of Type $6, (0,1),\,5,(0,1), 6,(1,3)$ or $3,(0,1)$.


\subsection{Type $5,(0,1)$}\label{case501}
In this case, $C$ is defined by a smooth plane equation of the form:
$$C:Z^5Y+L_{6,Z}=0,$$
where $\sigma:=\operatorname{diag}(1,1,\zeta_5)$ is an automorphism of maximal order $5$. In particular, $\Aut(C)$ contains an homology of period $d-1=5$ with center $P=(0:0:1)$, then $P\in C$ is an inner Galois point by Proposition \ref{d,d-1 homologies}. Moreover, it is the unique inner Galois point for $C$ by \cite[Theorem 4]{Yoshihara}. Hence, $P\in C$ must be invariant under the action of $\Aut(C)$. By \cite[Lemma 11.44 and Theorem 11:49]{Book}, we deduce that $\Aut(C)$ is always cyclic of order $5$.

Therefore, we have:
\begin{prop}\label{501}
Let $C$ be a smooth plane sextic curves $C$ of Type $5, (0,1)$ as above. Then, $\operatorname{Aut}(C)=\langle\sigma\rangle\simeq\Z/5\Z$, where $\sigma:=\operatorname{diag}(1,1,\zeta_{5})$.
\end{prop}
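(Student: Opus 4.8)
The plan is to exploit the fact that the homology $\sigma=\operatorname{diag}(1,1,\zeta_5)$ is an inner homology of maximal period $d-1=5$, which by Proposition \ref{d,d-1 homologies} means its center $P=(0:0:1)$ is an \emph{inner} Galois point for $C$. The whole argument should reduce to showing that $P$ is canonically attached to $C$ — so that every automorphism must fix $P$ — and then invoking the general structure theory for automorphisms fixing a point on a plane curve.

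First I would confirm the geometric setup: the equation $C:Z^5Y+L_{6,Z}=0$ (with $L_{6,Z}$ the generic sextic in $X,Y$) carries the homology $\sigma$ of order $5$ centered at $P=(0:0:1)$ with axis $Z=0$, and that $P$ lies on $C$. The key input is the uniqueness of the inner Galois point. By the cited result \cite[Theorem 4]{Yoshihara}, a smooth plane curve of degree $d=6$ can have at most one inner Galois point unless it is projectively equivalent to a very special curve (the Fermat-type or Klein-type configurations that support several), and for the generic member of this family $P$ is the unique inner Galois point. Consequently $P$ is a projective invariant of the pair: any $\phi\in\operatorname{Aut}(C)\subseteq\operatorname{PGL}_3(K)$ must permute the inner Galois points, hence must fix $P$.

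Next, once $\operatorname{Aut}(C)$ fixes the point $P\in C$, I would apply Theorem \ref{teoHarui}-(1): an automorphism group of a smooth plane curve that fixes a point \emph{lying on} the curve is cyclic. Equivalently, one can use the more precise Galois-covering description in \cite[Lemma 11.44 and Theorem 11.49]{Book}: the projection $\pi_P$ from $P$ realizes $C$ as a Galois cover of $\mathbb{P}^1$ with cyclic Galois group $\langle\sigma\rangle$ of order $d-1=5$, and every automorphism fixing $P$ descends compatibly, forcing $\operatorname{Aut}(C)$ into the cyclic stabilizer. This pins down $\operatorname{Aut}(C)=\langle\sigma\rangle\cong\Z/5\Z$, since $\sigma$ was chosen of maximal order and $5$ is already the full order of the cyclic Galois group.

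The main obstacle I anticipate is the uniqueness claim for the inner Galois point: one must be sure that the generic curve in the family $Z^5Y+L_{6,Z}=0$ is not one of the exceptional curves admitting multiple inner Galois points, which would allow $\operatorname{Aut}(C)$ to permute several such points nontrivially and potentially be larger than cyclic. I would address this by checking that the coefficient conditions defining the stratum (smoothness of $C$, and $\sigma$ being of genuinely maximal order, not a power of a larger cyclic automorphism) exclude the degenerations to Fermat- or Klein-type models; the classification in Proposition \ref{maximalordercyclic} already isolates those higher-symmetry cases under distinct Types, so within Type $5,(0,1)$ no such collision occurs. With that secured, the cyclicity conclusion follows immediately, and no delicate coordinate computation is required.
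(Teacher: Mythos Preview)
Your proposal is correct and follows essentially the same route as the paper: identify $P=(0:0:1)$ as an inner Galois point via Proposition~\ref{d,d-1 homologies}, invoke \cite[Theorem~4]{Yoshihara} for its uniqueness, conclude that $\operatorname{Aut}(C)$ fixes $P\in C$, and then apply \cite[Lemma~11.44, Theorem~11.49]{Book} (equivalently Theorem~\ref{teoHarui}-(1)) to obtain cyclicity. One small simplification: your worry about ``exceptional curves admitting multiple inner Galois points'' is unnecessary, since Yoshihara's result gives at most one inner Galois point for \emph{every} smooth plane curve of degree $d\geq 4$ unconditionally; the Fermat-type exceptions you have in mind concern \emph{outer} Galois points (as arises in the Type $6,(0,1)$ analysis), not inner ones.
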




\subsection{Type $6,(0,1)$}
In this case, $C$ is defined by a smooth plane equation of the form:
$$C:X^6+L_{6,X}=0,$$
where $\sigma:=\operatorname{diag}(\zeta_6,1,1)$ is an automorphism of maximal order $6$.

Because $\sigma$ is an homology of period $d=6$ with center $P=(1:0:0)$, then $P\notin C$ is an outer Galois point by Proposition \ref{d,d-1 homologies}. Moreover, automorphisms of $C$ have orders $\leq6$, so $C$ can not be $K$-isomorphic to the Fermat curve $\mathcal{F}_6$. Hence, by the results of Yoshihara \cite[Theorem 4', Proposition 5']{Yoshihara}, $P$ is a unique outer Galois point for $C$. Thus it is invariant under the action of $\Aut(C)$. By Theorem \ref{Mitchell1}-(1), $\Aut(C)$ also fixes the axis $\mathcal{L}:X=0$ of $\sigma$, not necessarily point-wise. In particular,
all automorphisms of $C$ are intransitive of the shape
$$
      \left(
        \begin{array}{ccc}
          1 & 0 & 0 \\
          0 & * & * \\
          0 & * & * \\
        \end{array}
      \right).
      $$
By Theorem \ref{teoHarui}, we see that either $C$ is a descendant of the Fermat curve $\mathcal{F}_6$ or $\Aut(C)$ lives in a short exact sequence
$1\rightarrow N=\langle\sigma\rangle\rightarrow \Aut(C)\rightarrow\Lambda(\Aut(C))\rightarrow 1,$ where $\Lambda(\Aut(C))$ is a cyclic group $\Z/m\Z$ of
order $m\leq 5$, a Dihedral group $\operatorname{D}_{2m}$ of order $2m$
with $m=1,2,$ or $4$, one of the alternating groups $\operatorname{A}_4$, $\operatorname{A}_5$, or the symmetry group $\operatorname{S}_4$.

In what follows, we deal with these two possibilities.
\begin{enumerate}
  \item If $C$ is a descendant of $\mathcal{F}_6$, then $\phi^{-1}\Aut(C)\phi\leq\Aut(\mathcal{F}_6)$ for some $\phi\in\PGL_3(K)$. We further can assume that $\phi^{-1}\sigma^2\phi=\sigma^2$ as homologies of order $3$ inside $\Aut(\mathcal{F}_6)$ form two conjugacy classes represented by $\sigma^2$ and $\sigma^{-2}$, which are not $\PGL_3(K)$-conjugated. In particular, $\phi$ has the same shape as automorphisms of $C$, that is, there is no loss of generality to assume that $\Aut(C)\leq\Aut(\mathcal{F}_6).$ Moreover, the shapes of automorphisms of $C$ leads to $$\Aut(C)\subseteq\{[X:\zeta_6^rY:\zeta_6^{r'}Z],\,[X:\zeta_6^{r'}Z:\zeta_6^rY]:0\leq r,r'\leq5\}.$$
      Therefore, by \cite[Lemma 6.5]{Harui}, $\Aut(C)$ lives in a short exact sequence
        $$1\rightarrow H\times\langle\operatorname{diag}(\zeta_6,1,1)\rangle\rightarrow\operatorname{Aut}(C)\rightarrow G \rightarrow1,$$
where $H\leq\langle\operatorname{diag}(1,\zeta_6,1)\rangle$ of order $\leq3$ and $G\leq\langle[X:Z:Y]\rangle$.

\begin{claim}\label{claim6,601}
If $G$ is trivial, then $\Aut(C)$ is $\Z/6\Z$ or $\Z/2\Z\times\Z/6\Z$.
\end{claim}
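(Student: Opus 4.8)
The plan is to read the possible isomorphism types of $\operatorname{Aut}(C)$ directly off the short exact sequence and then discard the single spurious possibility by a symmetry-forcing argument. When $G$ is trivial the sequence collapses to $\operatorname{Aut}(C) = H \times \langle\sigma\rangle$, where $\sigma = \operatorname{diag}(\zeta_6,1,1)$ generates a cyclic group of order $6$ and $H \le \langle\operatorname{diag}(1,\zeta_6,1)\rangle$ has order at most $3$. The subgroups of $\langle\operatorname{diag}(1,\zeta_6,1)\rangle \cong \Z/6\Z$ of order $\le 3$ are the trivial one, $\langle\operatorname{diag}(1,-1,1)\rangle \cong \Z/2\Z$, and $\langle\operatorname{diag}(1,\zeta_3,1)\rangle \cong \Z/3\Z$. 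The first two immediately yield $\operatorname{Aut}(C) \cong \Z/6\Z$ and $\operatorname{Aut}(C) \cong \Z/2\Z \times \Z/6\Z$, so the entire content of the claim is to exclude the case $|H| = 3$.

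To that end I would argue by contradiction, assuming $\tau_1 := \operatorname{diag}(1,\zeta_3,1) \in \operatorname{Aut}(C)$. A one-line computation in $\operatorname{PGL}_3(K)$ shows that $\tau_2 := \operatorname{diag}(1,1,\zeta_3)$ also lies in $\operatorname{Aut}(C)$, because $\sigma^4\tau_1^2 = \operatorname{diag}(\zeta_3^2,\zeta_3^2,1)$ represents $\tau_2$ projectively. Imposing invariance of $C : X^6 + L_{6,X} = 0$ under both $\tau_1$ and $\tau_2$ forces every monomial $Y^iZ^j$ of $L_{6,X}$ to satisfy $3 \mid i$ and $3 \mid j$; since $i + j = 6$ this leaves only $Y^6, Y^3Z^3, Z^6$, so $L_{6,X} = aY^6 + bY^3Z^3 + cZ^6$ for some $a,b,c \in K$.

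Next I would record the smoothness constraints: a Jacobian check shows that $a = 0$ makes $(0:1:0)$ singular and $c = 0$ makes $(0:0:1)$ singular, so $a,c \in K^*$. After the diagonal change of variables $\operatorname{diag}(1,\lambda,\mu)$ normalizing $a = c = 1$, the model becomes $X^6 + Y^6 + bY^3Z^3 + Z^6 = 0$, which is manifestly invariant under the transposition $[X:Z:Y]$. Conjugating back, $C$ itself carries an automorphism of the form $[X : \ast Z : \ast Y]$ interchanging $Y$ and $Z$; such an element is not diagonal, hence does not lie in the kernel $H \times \langle\sigma\rangle$, and therefore maps to the nontrivial class in $G \le \langle[X:Z:Y]\rangle$. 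This contradicts $G$ trivial, so $|H| \le 2$ and $\operatorname{Aut}(C)$ is $\Z/6\Z$ or $\Z/2\Z \times \Z/6\Z$, as claimed.

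The only genuinely delicate point is the last bookkeeping step: one must be sure that the extra symmetry $Y \leftrightarrow Z$ visible in the normalized model survives the change back to the original coordinates and is genuinely detected by $G$ rather than being absorbed into the diagonal kernel. Conjugation by the diagonal matrix $\operatorname{diag}(1,\lambda,\mu)$ preserves the off-diagonal (swapping) shape, which settles this. Everything else — the collapse of the sequence, the monomial count, and the smoothness computation — is routine; and if one prefers to bypass the normalization, one can instead solve directly for scalars $\alpha,\gamma$ with $(\alpha\gamma)^3 = 1$ and $\gamma^6 = a/c$ making $[X:\alpha Z:\gamma Y]$ preserve $aY^6 + bY^3Z^3 + cZ^6$.
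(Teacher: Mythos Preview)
Your proof is correct and follows essentially the same approach as the paper's: both split into the three cases $|H|=1,2,3$, dispose of the first two immediately, and kill the third by showing that $\operatorname{diag}(1,\zeta_3,1)\in\operatorname{Aut}(C)$ forces the defining equation into the shape $X^6+aY^6+bY^3Z^3+cZ^6$, which then visibly admits a $Y\leftrightarrow Z$ symmetry contradicting $G=1$. The paper simply asserts the normalized form $X^6+Y^6+Z^6+\beta_{3,3}Y^3Z^3$ directly, whereas you carefully derive $L_{6,X}=aY^6+bY^3Z^3+cZ^6$, invoke smoothness to get $a,c\neq 0$, and track the normalization and its inverse; this is extra bookkeeping the paper glosses over, but it does not constitute a different method.
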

\begin{proof} (of Claim \ref{claim6,601})
If $H=1$, then $\Aut(C)=\langle\operatorname{diag}(\zeta_6,1,1)\rangle\simeq\Z/6\Z$, and $C$ is defined by an equation of the form
   $$X^6+Y^6+Z^6+\,\,\textit{lower order terms in}\,\, Y,Z=0.$$
If $H=\langle\operatorname{diag}(1,-1,1)\rangle$, then $\Aut(C)=\langle\operatorname{diag}(\zeta_6,1,1),\,\operatorname{diag}(1,-1,1)\rangle\simeq\Z/2\Z\times\Z/6\Z$, and $C$ is defined by an equation of the form
   $$X^6+Y^6+Z^6+\beta_{4,2}Y^4Z^2+\beta_{2,4}Y^2Z^4=0,$$
   for some $\beta_{4,2}\neq\beta_{2,4}$.

If $H=\langle\operatorname{diag}(1,\zeta_3,1)$, then $\Aut(C)=\langle\operatorname{diag}(\zeta_6,1,1),\,\operatorname{diag}(1,\zeta_3,1)\rangle$, and $C$ is defined by an equation of the form
   $$X^6+Y^6+Z^6+\beta_{3,3}Y^3Z^3=0.$$
   However, we reject this case as $[X:Z:Y]\in\Aut(C)$, which violates the assumption $G=1$.

This shows Claim \ref{claim6,601}.
\end{proof}

\begin{claim}\label{claim7,601}
If $G=\langle[X:Z:Y]\rangle$, then $\Aut(C)$ is $\Z/2\Z\times\Z/6\Z$ or $\Z/6\Z\times\operatorname{S}_3$.
\end{claim}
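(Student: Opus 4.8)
The plan is to combine the short exact sequence just invoked from \cite[Lemma 6.5]{Harui} with the standing hypothesis that $\sigma:=\operatorname{diag}(\zeta_6,1,1)$ has \emph{maximal} order $6$ in $\Aut(C)$. Since $G=\langle[X:Z:Y]\rangle\cong\Z/2\Z$, the sequence $1\to N\to\Aut(C)\to G\to1$ with $N=H\times\langle\sigma\rangle$ forces $|\Aut(C)|=2|N|=12\,|H|$, where $H\le\langle\operatorname{diag}(1,\zeta_6,1)\rangle$ has order $|H|\in\{1,2,3\}$. First I would fix a preimage $\tilde\tau\in\Aut(C)$ of the generator of $G$; by the reduction $\Aut(C)\subseteq\{[X:\zeta_6^rY:\zeta_6^{r'}Z],[X:\zeta_6^{r'}Z:\zeta_6^rY]\}$ established above we may write $\tilde\tau=[X:\zeta_6^{r'}Z:\zeta_6^rY]$, and a direct computation gives $\tilde\tau^2=\sigma^{-(r+r')}$, with $\sigma$ central in the ambient group.

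The crux is to normalize $\tilde\tau$ to the standard involution $\tau:=[X:Z:Y]$. Replacing $\tilde\tau$ by $\tilde\tau n$ with $n\in N$ shifts the exponent $r+r'$ by an element of $2\Z$ (coming from the central factor $\langle\sigma\rangle$) together with the exponent lattice of $H$. Hence when $|H|=2$ — where $\operatorname{diag}(1,-1,1)$ contributes an \emph{odd} shift — one can always reach $r+r'\equiv0\pmod6$, so that $\tilde\tau^2=1$; conjugating by a suitable diagonal $\operatorname{diag}(1,s,t)$ (which centralizes $N$ and $\sigma$) then normalizes $\tilde\tau$ to $\tau$. When $|H|\in\{1,3\}$ only even shifts are available, so the parity of $r+r'$ is an invariant of the coset $N\tilde\tau$: if it is even we again reduce to $\tilde\tau=\tau$, while if it is odd then $\tilde\tau^2=\sigma^{j}$ with $j\not\equiv0\pmod6$, and the central order-$3$ element $\sigma^2$ together with $\tilde\tau$ (whose order is then divisible by $4$) generate an automorphism of order $12$, contradicting maximality of $\operatorname{ord}(\sigma)=6$. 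I expect this parity/maximality bookkeeping to be the main obstacle, since it is exactly what eliminates the non-involutory representatives of $G$.

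Having arranged $\tilde\tau=\tau$, I would identify $\Aut(C)=\langle\sigma,H,\tau\rangle$ case by case, using that $\sigma$ is central of order $6$, that $\tau^2=1$, and that $\tau$ interchanges $\operatorname{diag}(1,u,1)\leftrightarrow\operatorname{diag}(1,1,u)$. For $|H|=1$ this is $\langle\sigma\rangle\times\langle\tau\rangle\cong\Z/2\Z\times\Z/6\Z$. For $|H|=3$, with $H=\langle\operatorname{diag}(1,\zeta_3,1)\rangle$, the elements $\operatorname{diag}(1,\zeta_3,1)$ and $\tau\operatorname{diag}(1,\zeta_3,1)\tau=\operatorname{diag}(1,1,\zeta_3)$ generate a $(\Z/3\Z)^2$ on which $\tau$ acts by interchanging the two factors; the $\tau$-fixed diagonal is a central $\Z/3\Z$, whence $\langle\operatorname{diag}(1,\zeta_3,1),\operatorname{diag}(1,1,\zeta_3),\tau\rangle\cong\Z/3\Z\times\operatorname{S}_3$, and adjoining the central involution $\sigma^3$ as a direct $\Z/2\Z$ factor gives $\Aut(C)\cong\Z/6\Z\times\operatorname{S}_3$. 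For $|H|=2$, with $H=\langle\operatorname{diag}(1,-1,1)\rangle$, the relation $\tau\operatorname{diag}(1,-1,1)\tau=\operatorname{diag}(1,1,-1)=\operatorname{diag}(1,-1,1)\,\sigma^3$ (recall $\sigma^3=\operatorname{diag}(1,-1,-1)$ in $\PGL_3(K)$) shows $\langle\operatorname{diag}(1,-1,1),\tau,\sigma^3\rangle\cong\operatorname{D}_8$, so $\Aut(C)\cong\Z/3\Z\times\operatorname{D}_8$.

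Finally I would discard the case $|H|=2$: the group $\Z/3\Z\times\operatorname{D}_8$ contains an element of order $12$ (inside its $\Z/3\Z\times\Z/4\Z$ subgroup), once more contradicting maximality of $\operatorname{ord}(\sigma)=6$; equivalently, the corresponding curve $X^6+Y^6+Z^6+\beta_{4,2}(Y^4Z^2+Y^2Z^4)=0$ is of Type $12,(1,7)$ and was already settled in Proposition \ref{1217}. Thus only $|H|=1$ and $|H|=3$ survive, giving $\Aut(C)\cong\Z/2\Z\times\Z/6\Z$ and $\Aut(C)\cong\Z/6\Z\times\operatorname{S}_3$ respectively; reading off the $\langle\sigma,H,\tau\rangle$-invariant sextics yields the two defining equations ($X^6$ plus a symmetric sextic in $Y,Z$, respectively $X^6+Y^6+Z^6+\beta_{3,3}Y^3Z^3$ with $\beta_{3,3}\neq0$), as recorded in Table \ref{table:fullAuto.}.
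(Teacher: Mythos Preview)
Your proof is correct and follows essentially the same approach as the paper's: case-split on $|H|\in\{1,2,3\}$, identify the resulting group, and discard $|H|=2$ via the presence of an order-$12$ element (the paper simply exhibits $[\zeta_6X:-Z:Y]$ explicitly). Your treatment is a bit more careful in justifying why the preimage of $G$ can be normalized to $\tau=[X:Z:Y]$ --- a step the paper takes for granted --- but the overall structure and conclusions coincide.
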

\begin{proof} (of Claim \ref{claim7,601})
If $H=1$, then $\Aut(C)=
   \langle\operatorname{diag}(\zeta_6,1,1),[X:Z:Y]\rangle\simeq\Z/2\Z\times\Z/6\Z$, and $C$ is defined by an equation of the form
   $$X^6+Y^6+Z^6+\beta_{5,1}(Y^{5}Z+YZ^{5})+\beta_{4,2}(Y^{4}Z^2+Y^2Z^{4})+\beta_{3,3}Y^3Z^3=0,$$
such that $\beta_{5,1}\neq0$ or $\beta_{4,2}\neq0$. 

If $H=\langle\operatorname{diag}(1,-1,1)\rangle$, then $\Aut(C)=\langle\operatorname{diag}(\zeta_6,1,1),\,\operatorname{diag}(1,-1,1),\,[X:Z:Y]\rangle$, and we reject this case as $[\zeta_6X:-Z:Y]$ would be an automorphism of order $12>6$, a contradiction.

If $H=\langle\operatorname{diag}(1,\zeta_3,1)$, then $\Aut(C)=\langle\operatorname{diag}(\zeta_6,1,1),\,\operatorname{diag}(1,\zeta_3,1),\,[X:Z:Y]\rangle$, and $C$ is defined by an equation of the form
   $$X^6+Y^6+Z^6+\beta_{3,3}Y^3Z^3=0,$$
   for some $\beta_{3,3}\neq0$. It is easy to check that
   \begin{eqnarray*}
     \Aut(C) &=& \langle\sigma,\tau',\eta:\sigma^6=\tau'^2=\eta^3=1,\,\sigma\eta=\eta\sigma,\,\sigma\tau'=\tau'\sigma,\,\tau'\eta\tau'=\eta^{-1}\rangle \\
      &\simeq&\operatorname{GAP}(36,12)=\Z/6\Z\times\operatorname{S}_3.
   \end{eqnarray*}
   with $\tau':=[X:Z:Y]$ and $\eta:=\operatorname{diag}(1,\zeta_3,1).$

This shows Claim \ref{claim7,601}.
\end{proof}

  \item Otherwise, $\Aut(C)$ lives in a short exact sequence
$$1\rightarrow N=\langle\sigma\rangle\rightarrow \Aut(C)\rightarrow\Lambda(\Aut(C))\rightarrow 1,$$ where $\Lambda(\Aut(C))$ is a cyclic group $\Z/m\Z$ of
order $m\leq 5$, a Dihedral group $\operatorname{D}_{2m}$ of order $2m$
with $m=1,2,$ or $4$, one of the alternating groups $\operatorname{A}_4$, $\operatorname{A}_5$, or the symmetry group $\operatorname{S}_4$.
\begin{claim}\label{claim1601}
$\Lambda(\Aut(C))$ is either trivial or $\Z/2\Z$.
\end{claim}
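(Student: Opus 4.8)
The plan is to push the entire question onto the binary sextic cut out on the fixed line $\mathcal{L}:X=0$. Write $C:X^6+L_{6,X}(Y,Z)=0$. Since every element of $\operatorname{Aut}(C)$ is intransitive of the displayed shape, after normalising the $(1,1)$-entry to $1$ it fixes $X$, so the coefficient $1$ of $X^6$ forces any such automorphism to preserve $L_{6,X}$ \emph{as a form}, not merely up to scalar. Thus $\operatorname{Aut}(C)$ is exactly $\{M\in\operatorname{GL}_2(K):L_{6,X}\circ M=L_{6,X}\}$ (embedded as $\operatorname{diag}(1)\oplus M$), the normal subgroup $N=\langle\sigma\rangle$ corresponds to the scalars $\mu_6\subset\operatorname{GL}_2(K)$, and $\bar G:=\Lambda(\operatorname{Aut}(C))$ is the image in $\operatorname{PGL}_2(K)$, acting faithfully on $\mathcal{L}\cong\mathbb{P}^1$ and permuting the zeros of $L_{6,X}$. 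The first key observation is that smoothness of $C$ along $X=0$ forces $L_{6,X}$ to be \emph{separable}: a multiple zero $(0:y_0:z_0)$ would make $\partial_YF=\partial_YL_{6,X}$ and $\partial_ZF=\partial_ZL_{6,X}$ vanish there simultaneously with $\partial_XF=6X^5$, producing a singular point. Hence the zero locus is a reduced set of six points.

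Next I would run through the finite-subgroup list allowed for $\bar G$ by Theorem \ref{teoHarui}(2); since $|N|=6\neq1$ the dihedral parameter $m$ divides $d-2=4$, so the candidates are $\Z/m\Z$ with $m\le5$, $\operatorname{D}_{2m}$ with $m\in\{1,2,4\}$, and $\operatorname{A}_4,\operatorname{S}_4,\operatorname{A}_5$, and show that every candidate other than the trivial group and $\Z/2\Z=\operatorname{D}_2$ is impossible. The engine is the cyclic case: if $\bar G$ has an element of order $m\ge3$, diagonalise its lift to $\operatorname{diag}(\alpha,\beta)$ (a coordinate change on $\mathcal{L}$ only, preserving the shape $X^6+(\text{sextic})$), so that a monomial $Y^iZ^{6-i}$ can survive in $L_{6,X}$ only if $\alpha^i\beta^{6-i}=1$; as $[\operatorname{diag}(\alpha,\beta)]$ has order $m$, all surviving exponents $i$ lie in one residue class modulo $m$. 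Scanning $i\in\{0,\dots,6\}$ for $m=3,4,5$ and discarding every class that forces $Y$ or $Z$ to divide $L_{6,X}$ to order $\ge2$ (ruled out by separability) leaves only three separable shapes: $c_6Y^6+c_3Y^3Z^3+c_0Z^6$ from $m=3$, $c_1YZ^5+c_5Y^5Z$ from $m=4$, and the two $m=5$ shapes $c_6Y^6+c_1YZ^5$, $c_5Y^5Z+c_0Z^6$. The first, with $c_0c_6\neq0$, has core $X^6+c_6Y^6+c_0Z^6\sim\mathcal{F}_6$, making $C$ a descendant of $\mathcal{F}_6$, excluded in the present branch; the remaining shapes are projectively Types $24,(1,19)$ and $30,(5,6)$, each carrying an automorphism of order $>6$, contradicting the maximality of $\sigma$.

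This dispatches $\Z/3\Z,\Z/4\Z,\Z/5\Z$ at once, whence $\operatorname{D}_8$ (containing $\Z/4\Z$) and $\operatorname{A}_4,\operatorname{S}_4,\operatorname{A}_5$ (each containing $\Z/3\Z$) fall by the same monomial constraint applied to their order-$4$ or order-$3$ elements. The one genuinely separate case is $\bar G=\operatorname{D}_4=(\Z/2\Z)^2$, whose cyclic subgroups all have order $2$; here I would argue directly, diagonalising one involution so that all exponents $i$ share a parity, then imposing the second, anti-diagonal involution. The all-even branch yields $c_0(Y^6+Z^6)+c_2(Y^4Z^2+Y^2Z^4)$, forcing either a Fermat core ($c_0\neq0$) or a non-separable form, while the all-odd branch yields $c_1(Y^5Z+YZ^5)+c_3Y^3Z^3$, i.e. Type $12,(1,7)$ with its order-$12$ automorphism; both are excluded, so $(\Z/2\Z)^2$ cannot occur and only $\bar G=1$ or $\Z/2\Z$ remain. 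The hard part is not conceptual but the bookkeeping: one must verify that separability, the descendant-of-$\mathcal{F}_6$ hypothesis, and the maximality $\operatorname{ord}(\sigma)=6$ \emph{together} eliminate every surviving residue class, and in particular recognise each leftover separable form correctly as one of the already-classified high-symmetry Types $24$, $30$, $12$ rather than as a new stratum.
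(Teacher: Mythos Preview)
Your overall strategy---push everything to the binary sextic $L_{6,X}$ on $\mathcal{L}$, use separability to prune residue classes, then identify each surviving form as either a known high-symmetry Type or a descendant of $\mathcal{F}_6$---is sound and runs parallel to the paper's case analysis. The separability observation is a clean way to organise the bookkeeping and in fact handles the $m=4$ case more carefully than the paper's one-line reduction.

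There is, however, a genuine gap in the step ``core $X^6+c_6Y^6+c_0Z^6\sim\mathcal{F}_6$, making $C$ a descendant of $\mathcal{F}_6$''. Being a descendant (in the sense used in Theorem~\ref{teoHarui}) requires not just the core condition but that $\Aut(C)$ be conjugate to a subgroup of $\Aut(\mathcal{F}_6)$; Fermat core alone does not guarantee this (the $\operatorname{Hess}_{216}$ sextic $X^6+Y^6+Z^6-10(X^3Y^3+Y^3Z^3+Z^3X^3)$ is a counterexample). So when your $m=3$ analysis produces $X^6+Y^6+Z^6+\beta Y^3Z^3$, and when the even branch of your $(\Z/2\Z)^2$ analysis produces $X^6+Y^6+Z^6+\beta(Y^4Z^2+Y^2Z^4)$, you have not yet earned the right to invoke the case-(ii) hypothesis ``not a descendant''. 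You would need to check explicitly that every automorphism of these particular curves lies in $\Aut(\mathcal{F}_6)$---which is true, since $\Aut(C)\subset\operatorname{PBD}(2,1)$ has already been established and one can compute $\Lambda(\Aut(C))$ directly---but that computation is missing.

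The paper sidesteps this entirely. For $\Lambda(\Aut(C))=\Z/3\Z$ it observes that the form $X^6+Y^6+Z^6+\beta Y^3Z^3$ admits the extra involution $[Z:Y]\in\Lambda(\Aut(C))$, contradicting $|\Lambda|=3$ on the spot. For $\operatorname{D}_4$ and $\operatorname{A}_4$ it exhibits an order-$12$ automorphism of $X^6+Y^6+Z^6+\beta(Y^4Z^2+Y^2Z^4)$, contradicting the maximality of $|\sigma|=6$. These are self-contained contradictions that never touch the descendant hypothesis, and either of them patches your argument immediately.
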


\begin{proof}(of Claim \ref{claim1601})
If $\Lambda(\Aut(C))\simeq\Z/4\Z,\,\operatorname{D}_8$ or $\operatorname{S}_4$, then, by \cite[Lemma 2.2.1, I]{Hugg1}, we can assume that $\langle\Lambda(\eta):=\operatorname{diag}(1,\zeta_4)\rangle\leq\Lambda(\Aut(C))$ for some $\eta\in\Aut(C)$. Since $L_{6,X}$ is invariant under the action of $\Lambda(\eta)$, $C$ reduces up to $K$-isomorphism to $X^6+\beta_{2,4}Y^2Z^4=0,$ which is singular at the point $(0:0:1)$, a contradiction.

If $\Lambda(\Aut(C))\simeq\Z/3\Z$, then, by a similar argument as before, $C$ would admit an automorphism $\eta$ whose image $\Lambda(\eta)=\operatorname{diag}(1,\zeta_3)$ leaves invariant $L_{6,X}$. This reduces $C$, up to $K$-isomorphism, to $C:X^6+Y^6+Z^6+\beta_{3,4}Y^3Z^3=0.$ Clearly $[Z:Y]$ is another element in $\Lambda(\Aut(C))$ of order $2\nmid 3$, a contradiction.

If $\Lambda(\Aut(C))\simeq\Z/5\Z$ or $\operatorname{A}_5$, then $C$ would have an automorphism of order $15>5$ as $N=\langle\sigma\rangle$ is normal in $\Aut(C)$ and $\Aut(C)$ contains element of order $5$ with $\operatorname{gcd}(|N|,5)=1$. This contradicts the assumption that automorphisms of $C$ are of orders $\leq6$.

Finally, if $\Lambda(\Aut(C))\simeq\operatorname{D}_4$ or $\operatorname{A}_4$, then, without loss of generality using \cite[Lemma 2.2.1, I]{Hugg1}, we can assume that $L_{6,X}$ is invariant under the action of $\Lambda(\tau)=\operatorname{diag}(1,-1)$ and $\Lambda(\eta)=[Z:Y]$ for some $\tau,\eta\in\Aut(C)$. This reduces $C$ up to $K$-isomorphism to $C:X^6+Y^6+Z^6+\beta_{2,4}\left(Y^2Z^4+Y^4Z^2\right)=0,$
       where $\tau=\operatorname{diag}(1,\lambda,-\lambda)$  and $\eta=[X:\mu Z:\mu Y]$ for some $\lambda,\mu$ such that $\lambda^6=\mu^6=1$. This in turns implies that
  \begin{eqnarray*}
    \Aut(C) &\supseteq&\left\{\operatorname{diag}(1,\zeta_6^{r},\pm\zeta_6^{r}),\,[X:\pm\zeta_{6}^{r}Z:\zeta_{6}^rY]\,:\,r=0,1,\cdots,5\right\}\\
    &=&\langle\sigma,\tau,\eta:\sigma^6=\tau^2=\eta^2=1,\,\sigma\tau=\tau\sigma,\sigma\eta=\eta\sigma,(\eta\tau)^2=\sigma^3\rangle \\
    &\simeq& \operatorname{GAP}(24,10)=\Z/3\Z\times\operatorname{D}_8,
  \end{eqnarray*}
  with $\tau=\operatorname{diag}(1,1,-1)$ and $\eta=[X:Z:Y]$. Accordingly, $C$ would have automorphisms of order $12>6$, which is not allowed by assumption.
%

This proves Claim \ref{claim1601}.
\end{proof}

\begin{claim}\label{claim2601}
If $\Lambda(\Aut(C))\simeq\Z/2\Z$, then $\Aut(C)\simeq\Z/2\Z\times\Z/6\Z$.
\end{claim}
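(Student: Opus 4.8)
The plan is to pin down the group-theoretic shape of the extension $1\rightarrow N\rightarrow\Aut(C)\rightarrow\Lambda(\Aut(C))\rightarrow 1$ and then invoke the standing order restriction $\leq 6$ on the elements of $\Aut(C)$. Since $N=\langle\sigma\rangle\simeq\Z/6\Z$ and, by hypothesis, $\Lambda(\Aut(C))\simeq\Z/2\Z$, the group $\Aut(C)$ has order $12$. Choosing any $\tau\in\Aut(C)$ whose image generates $\Lambda(\Aut(C))$, we obtain $\Aut(C)=\langle\sigma,\tau\rangle$, so everything reduces to understanding the single element $\tau$.

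First I would normalize $\tau$. Its image $\Lambda(\tau)$ is an involution of $\operatorname{PGL}_2(K)$, hence diagonalizable; conjugating $\Aut(C)$ by a block matrix of the form $\operatorname{diag}(1,h)$ with $h\in\operatorname{GL}_2(K)$ — which fixes $\sigma$ and keeps the equation in the shape $X^6+L_{6,X}$ up to $K$-isomorphism, since it only rescales the $X$-coordinate and mixes $Y,Z$ — I may assume $\Lambda(\tau)=\operatorname{diag}(1,-1)$. Because $\tau$ lies in $\operatorname{PBD}(2,1)$ with its top-left entry normalized to $1$, this forces $\tau=\operatorname{diag}(1,c,-c)$ for some $c\in K^*$.

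The decisive observation is then immediate: two diagonal matrices commute, so $\tau\sigma=\sigma\tau$. Thus $N=\langle\sigma\rangle$ is central in $\Aut(C)$, and $\Aut(C)=\langle\sigma,\tau\rangle$ is a central extension of the cyclic group $\Z/2\Z$ by $\Z/6\Z$. A central extension with cyclic quotient is abelian, so $\Aut(C)$ is abelian of order $12$. The only abelian groups of order $12$ are $\Z/12\Z$ and $\Z/2\Z\times\Z/6\Z$, and the assumption that every automorphism of $C$ has order $\leq 6$ (as $\sigma$ is of maximal order) rules out $\Z/12\Z$. Hence $\Aut(C)\simeq\Z/2\Z\times\Z/6\Z$, as claimed.

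I expect no serious obstacle; the only point needing care is the normalization step, where one must check that the block-diagonal conjugation diagonalizing $\Lambda(\tau)$ neither disturbs the canonical form of $\sigma$ nor pushes $C$ outside the family $X^6+L_{6,X}$ — both of which hold because such a conjugation fixes the $X$-coordinate up to scalar. Once $\tau$ is diagonal, commutativity together with the order bound does all the work, and no explicit invariance computation on $L_{6,X}$ is required.
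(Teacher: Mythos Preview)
Your proof is correct and follows essentially the same approach as the paper: both normalize the order-two image to $\Lambda(\tau)=\operatorname{diag}(1,-1)$ (the paper invokes \cite[Lemma 2.2.1, I]{Hugg1} for this step), so that the lifted automorphism is diagonal and hence commutes with $\sigma$. The paper then goes slightly further by writing down the resulting normal form $X^6+Y^6+Z^6+\beta_{2,4}Y^2Z^4+\beta_{4,2}Y^4Z^2$ and reading off $\Aut(C)=\langle\sigma,\operatorname{diag}(1,1,-1)\rangle$ explicitly, whereas you finish with the cleaner group-theoretic observation that an abelian group of order $12$ containing no element of order $>6$ must be $\Z/2\Z\times\Z/6\Z$.
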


\begin{proof}(of Claim \ref{claim2601})
By the aid of \cite[Lemma 2.2.1, I]{Hugg1} we can assume that $L_{6,X}$ is leaved invariant under the action of $\Lambda(\eta)=\operatorname{diag}(1,-1)$ for some $\eta\in\Aut(C)$. This reduces $C$, up to $K$-isomorphism, to $C:X^6+Y^6+Z^6+\beta_{2,4}Y^2Z^4+\beta_{4,2}Y^4Z^2=0,$
      where $\eta=\operatorname{diag}(1,\lambda,-\lambda)$ for some $\lambda\in K$ such that $\lambda^6=1$. This in turns implies that all automorphisms of $C$ are diagonal, more precisely,
  \begin{eqnarray*}
    \Aut(C) &=&\langle\sigma,\tau:\sigma^6=\tau^2=1,\,\sigma\tau=\tau\sigma\rangle \\
    &\simeq& \operatorname{GAP}(12,5)=\Z/2\Z\times\Z/6\Z,
  \end{eqnarray*}
  with $\tau:=\operatorname{diag}(1,1,-1)$. This shows Claim \ref{claim2601}.
  \end{proof}
\end{enumerate}

As a consequence of the previous discussion, we conclude:
\begin{prop}\label{aut601}
Let $C$ be a smooth plane sextic curves $C$ of Type $6, (0,1)$ as above. Then, $\Aut(C)$ is always cyclic of order $6$ generated by $\sigma=\operatorname{diag}(\zeta_6,1,1)$ unless one of the following situations holds.
\begin{enumerate}[1.]
  \item $C$ is $K$-isomorphic to $X^6+Y^6+Z^6+\beta_{2,4}Y^2Z^4+\beta_{4,2}Y^4Z^2=0$ for some $\beta_{2,4}\neq\beta_{4,2}$. In this case, $\Aut(C)=\langle\sigma,\tau\rangle\simeq\Z/2\Z\times\Z/6\Z$ with $\tau:=\operatorname{diag}(1,1,-1)$.
  \item $C$ is $K$-isomorphic to $X^6+Y^6+Z^6+\beta_{5,1}(Y^{5}Z+YZ^{5})+\beta_{4,2}(Y^{4}Z^2+Y^2Z^{4})+\beta_{3,3}Y^3Z^3=0,$ such that $\beta_{5,1}\neq0$ or $\beta_{4,2}\neq0$. In this case, $\Aut(C)=\langle\sigma,\tau'\rangle\simeq\Z/2\Z\times\Z/6\Z$ with $\tau':=[X:Z:Y]$.
  \item $C$ is $K$-isomorphic to $X^6+Y^6+Z^6+\beta_{3,3}Y^3Z^3=0,$ for some $\beta_{3,3}\neq0$. In this case, $\Aut(C)=\langle\sigma,\tau',\eta\rangle\simeq\Z/6\Z\times\operatorname{S}_3$ with $\eta:=\operatorname{diag}(1,\zeta_3,1)$.
\end{enumerate}
\end{prop}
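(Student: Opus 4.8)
The plan is to assemble the case analysis already carried out in the discussion preceding the statement, since every genuinely computational step is contained in Claims~\ref{claim6,601}--\ref{claim2601}; the proof of the proposition itself is purely organizational. Recall that $\sigma=\operatorname{diag}(\zeta_6,1,1)$ is an homology of period $d=6$, so by Proposition~\ref{d,d-1 homologies} its center $P=(1:0:0)$ is an outer Galois point; since $\sigma$ has maximal order, $C$ is not $K$-isomorphic to $\mathcal{F}_6$, whence $P$ is the \emph{unique} such point and is therefore $\Aut(C)$-invariant, and by Theorem~\ref{Mitchell1}-(1) so is the axis $\mathcal{L}:X=0$. Thus every automorphism is intransitive, and Theorem~\ref{teoHarui} leaves exactly two branches: either $C$ is a descendant of $\mathcal{F}_6$, or $\Aut(C)$ fits into $1\to N=\langle\sigma\rangle\to\Aut(C)\to\Lambda(\Aut(C))\to 1$. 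I would simply run through both branches and record which groups survive.

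In the descendant branch, \cite[Lemma 6.5]{Harui} refines the extension to $1\to H\times\langle\operatorname{diag}(\zeta_6,1,1)\rangle\to\Aut(C)\to G\to 1$ with $H\leq\langle\operatorname{diag}(1,\zeta_3,1)\rangle$ of order $\leq 3$ and $G\leq\langle[X:Z:Y]\rangle$. When $G$ is trivial, Claim~\ref{claim6,601} yields only $\Z/6\Z$ (the generic model) and $\Z/2\Z\times\Z/6\Z$ realized by $\tau=\operatorname{diag}(1,1,-1)$, the subcase $H=\langle\operatorname{diag}(1,\zeta_3,1)\rangle$ collapsing into the next branch because it forces $[X:Z:Y]\in\Aut(C)$. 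When $G=\langle[X:Z:Y]\rangle$, Claim~\ref{claim7,601} yields the second realization of $\Z/2\Z\times\Z/6\Z$ (now by $\tau'=[X:Z:Y]$) and the group $\Z/6\Z\times\operatorname{S}_3$, while excluding $H=\langle\operatorname{diag}(1,-1,1)\rangle$ since it would produce an order-$12$ automorphism.

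In the non-descendant branch, Claim~\ref{claim1601} already forces $\Lambda(\Aut(C))$ to be trivial or $\Z/2\Z$, every larger candidate being killed either by singularity of the reduced model or by an automorphism of order $>6$. If $\Lambda(\Aut(C))$ is trivial then $\Aut(C)=N\simeq\Z/6\Z$; if it is $\Z/2\Z$, Claim~\ref{claim2601} gives $\Aut(C)\simeq\Z/2\Z\times\Z/6\Z$ with $\tau=\operatorname{diag}(1,1,-1)$, which coincides with case~(1).

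Merging the branches, the generic outcome is $\Z/6\Z$ and the non-cyclic possibilities are exactly the three listed strata, each matched with its defining equation and generators. I expect no hard computation to remain; the only real subtlety is the bookkeeping---verifying exhaustiveness across the two branches and confirming that the two appearances of $\Z/2\Z\times\Z/6\Z$, distinguished by the reflection $\operatorname{diag}(1,1,-1)$ versus $[X:Z:Y]$, are the genuinely distinct families of cases~(1) and~(2) rather than duplicates.
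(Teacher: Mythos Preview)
Your proposal is correct and follows essentially the same approach as the paper: the proof of the proposition is purely organizational, collecting the outcomes of Claims~\ref{claim6,601}, \ref{claim7,601}, \ref{claim1601}, and \ref{claim2601} across the two branches (descendant of $\mathcal{F}_6$ versus the short exact sequence case), exactly as you outline. One minor point on your closing remark: the paper does not verify that the two $\Z/2\Z\times\Z/6\Z$ families are non-duplicates---in fact, the Remark immediately following the proposition observes that they are $K$-equivalent via a change of variables of the form $\phi=[\lambda X:\mu(Y+Z):Y-Z]$---so that piece of bookkeeping resolves in the opposite direction from what you anticipate, though this does not affect the proof of the proposition itself.
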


\begin{rem}
  We remark that the above two families in Proposition \ref{aut601}-(1), (2) are $K$-equivalent via a change of variables in $\PGL_3(K)$ of the form $$\phi=[\lambda X:\mu(Y+Z):Y-Z]$$ for some $\lambda,\mu\in K$ such that $\lambda^6=1$ and $\mu^4+\beta_{4,2}\mu^2+\beta_{2,4}=0$. So any curve in any of the families is $K$-isomorphic to a member in the other family and vice versa.
\end{rem}



\subsection{Type $6,(1,3)$}\label{case613}
In this case, $C$ is defined by a smooth plane equation of the form:
$$C:X^6+Y^6+Z^6+\beta_{2,0}X^4Z^2+\beta_{0,3}Y^3Z^3+ X^2\left(\beta_{4,0}Z^4+\beta_{4,3}Y^3Z\right),$$
where $\sigma:=\operatorname{diag}(1,\zeta_6,-1)$ is an automorphism of maximal order $6$. In particular, $\sigma^2=\operatorname{diag}(1,\zeta_3,1)$ is an homology of period $3$ in $\Aut(C)$. Then, by Theorem \ref{Mitchell1}, $\Aut(C)$ fixes a line and a point off this line, fixes a triangle, or $\Aut(C)$ is $\PGL_3(K)$-conjugate to $\operatorname{Hess}_{216}$.

\begin{claim}\label{claim1,613}
  If $\Aut(C)$ fixes a triangle and neither a line nor a point is leaved invariant, then $\Aut(C)\simeq\Z/6\Z,\,\Z/2\Z\times\Z/6\Z,\,\Z/3\Z\times\operatorname{S}_3,$\,$\Z/6\Z\times\operatorname{S}_3$ or $(\Z/3\Z)^2\rtimes\Z/6\Z$.
\end{claim}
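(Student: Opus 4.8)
The plan is to exploit the monomial (imprimitive) structure that a fixed triangle forces, and to read off $\Aut(C)$ from a short exact sequence. First I would normalize the triangle. Since $\sigma^2=\operatorname{diag}(1,\zeta_3,1)$ is a homology of period $3$ with center $(0:1:0)$ and axis $Y=0$, Theorem \ref{Mitchell1}-(2)–(3) shows that $\Aut(C)$ is not $\operatorname{Hess}_{216}$ and that every element must permute the centers and axes of the resulting homologies; thus the invariant triangle may be taken to be the coordinate triangle $XYZ=0$, with $\sigma$ already diagonal. Then $\Aut(C)$ is contained in the monomial group $T\rtimes\operatorname{S}_3$, where $T$ is the diagonal torus, and it sits in an exact sequence
$$1\longrightarrow D\longrightarrow \Aut(C)\stackrel{\pi}{\longrightarrow}\Sigma\longrightarrow 1,$$
where $D=\Aut(C)\cap T$ is the normal group of diagonal automorphisms and $\Sigma=\pi(\Aut(C))\leq\operatorname{S}_3$ records the induced permutation of the vertices.

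Next I would compute $D$ directly. Writing a diagonal element as $\operatorname{diag}(1,u,v)$, it preserves $C$ exactly when every monomial occurring in $F$ has weight $1$: the omnipresent $Y^6,Z^6$ force $u^6=v^6=1$, while nonvanishing of $\beta_{2,0},\beta_{4,0},\beta_{0,3},\beta_{4,3}$ imposes respectively $v^2=1$, $v^4=1$, $u^3v^3=1$, $u^3v=1$. Running through the vanishing patterns of the $\beta_{i,j}$, and always retaining $\sigma\in D$ (so that no diagonal element of order $>6$ appears, which would push $C$ to a larger type or to the Fermat sextic), one finds that $D$ is a small abelian group containing $\langle\sigma\rangle\simeq\Z/6\Z$: the generic pattern gives $D=\langle\sigma\rangle$, the vanishing $\beta_{0,3}=\beta_{4,3}=0$ gives $D\simeq\Z/2\Z\times\Z/6\Z$, and the most degenerate surviving patterns enlarge the $3$-part to $(\Z/3\Z)^2$.

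I would then pin down $\Sigma$ and assemble. A transposition (e.g.\ $[X:Z:Y]$) or a $3$-cycle (e.g.\ $[Y:Z:X]$) lies in $\Aut(C)$ only after the remaining $\beta_{i,j}$ collapse to a $Y\!\leftrightarrow\! Z$–symmetric or cyclically/fully symmetric shape; applying the requisite element of $\PGL_3(K)$ brings $C$ to a recognizable model such as $X^6+Y^6+Z^6+\beta_{0,3}Y^3Z^3$ or $X^6+Y^6+Z^6+\beta_{3,3}(X^3Y^3+Y^3Z^3+Z^3X^3)$. Feeding each admissible pair $(D,\Sigma)$ back into the exact sequence and identifying the extension data with the action of $\Sigma$ on $D$ produces exactly $\Z/6\Z$, $\Z/2\Z\times\Z/6\Z$, $\Z/3\Z\times\operatorname{S}_3$, $\Z/6\Z\times\operatorname{S}_3$ and $(\Z/3\Z)^2\rtimes\Z/6\Z$, after checking in each case that no further monomial automorphism survives.

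The step I expect to be the main obstacle is not the group theory but the bookkeeping of projective changes of coordinates. Several specializations of Type $6,(1,3)$ only reveal their full $\operatorname{S}_3$– or $3$–cyclic symmetry after a \emph{non-diagonal} transformation, so the delicate part is to match each vanishing pattern to a normal form while simultaneously verifying that the change keeps $\sigma$ of maximal order $6$ (thereby excluding the Fermat and $\operatorname{Hess}_{216}$ degenerations) and does not secretly enlarge $D$. Proving that the listed models are pairwise inequivalent and that they exhaust the triangle case is where the genuine work lies.
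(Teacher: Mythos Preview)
Your overall strategy---embed $\Aut(C)$ in a monomial group via the fixed triangle and analyze the exact sequence $1\to D\to\Aut(C)\to\Sigma\to 1$---is reasonable and close in spirit to the paper's approach. However, your normalization step has a genuine gap. You assert that the invariant triangle may be taken to be the coordinate triangle $XYZ=0$ while keeping $\sigma=\operatorname{diag}(1,\zeta_6,-1)$ diagonal; this is not justified. Since $\sigma$ has order $6$, it acts on the three vertices of the fixed triangle by a permutation of order $1$, $2$, or $3$. The $3$-cycle case is excluded because $\sigma$ fixes $(1{:}0{:}0)$, but the transposition case survives: then $\sigma^2=\operatorname{diag}(1,\zeta_3,1)$ fixes all three vertices, forcing them to be $(0{:}1{:}0)$ together with two points $(1{:}0{:}\pm c)$ on its axis $Y=0$, which $\sigma$ swaps. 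Bringing this triangle to the reference triangle requires a non-diagonal change in $(X,Z)$, after which $\sigma$ is no longer diagonal and the equation is no longer in Type $6,(1,3)$ shape---so your computation of $D$ directly from the original $\beta_{i,j}$ is not valid in this case. (Your appeal to Theorem~\ref{Mitchell1}-(2) is also off: that item says $\operatorname{Hess}_{216}$ is the \emph{only} primitive group containing period-$3$ homologies, so the presence of $\sigma^2$ does not by itself exclude it.)

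The paper handles exactly this point by first invoking Theorem~\ref{teoHarui} to conclude that $C$ is a descendant of the Fermat sextic $\mathcal{F}_6$ (Klein being excluded since $6\nmid 63$), so that $\phi^{-1}\Aut(C)\phi\leq\Aut(\mathcal{F}_6)$ for some $\phi$. Normalizing $\phi^{-1}\sigma^2\phi=\sigma^2$ (possible since period-$3$ homologies in $\Aut(\mathcal{F}_6)$ form one class up to inverse) and then requiring $\phi^{-1}\sigma\phi\in\Aut(\mathcal{F}_6)$, the paper finds three shapes for $\phi$: two essentially diagonal ones and the genuinely non-diagonal $\phi=[aX+bZ:Y:cX-\tfrac{bc}{a}Z]$. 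It is this last case that carries almost all the content: the outcomes $\Z/3\Z\times\operatorname{S}_3$, $(\Z/3\Z)^2\rtimes\Z/6\Z$, $\Z/2\Z\times\Z/6\Z$ and the generic $\Z/6\Z$ all arise there, after a nontrivial rewriting of $C$ into a new normal form $C'$ on which the relevant elements of $\Aut(\mathcal{F}_6)$ can be tested directly. You correctly anticipate that non-diagonal changes are the crux, but you locate the difficulty in the wrong place: it is not that the $\beta_{i,j}$ must ``collapse'' within the given Type $6,(1,3)$ equation to reveal a permutation symmetry, but rather that the fixed triangle may differ from the coordinate triangle from the outset, so the monomial framework has to be set up in a different chart before any computation of $D$ or $\Sigma$ can begin.
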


\begin{proof}(of Claim \ref{claim1,613})
  If $\Aut(C)$ fixes a triangle and neither a line nor a point is leaved invariant, then $C$ is a descendant of the Fermat curve $\mathcal{F}_6$ or the Klein curve $\mathcal{K}_6$. Clearly, $C$ is not a descendant of $\mathcal{K}_6$ because $6\nmid|\Aut(\mathcal{K}_6)|\,(=63)$. On the other hand, if $C$ is a descendant of $\mathcal{F}_6$, then, by Proposition \ref{1217}, we can assume that $\phi^{-1}\Aut(C)\phi\leq\Aut(\mathcal{F}_6)(\simeq(\Z/6\Z)^2\rtimes\operatorname{S}_3)$ for some $\phi\in\PGL_3(K)$ such that $\phi^{-1}\sigma^2\phi=\sigma^2$ as homologies of period $3$ in $\Aut(\mathcal{F}_6)$ form two conjugacy classes represented by $\sigma^2$ and $\sigma^{-2}$, but $\sigma^2$ and $\sigma^{-2}$ are not $\PGL_3(K)$-conjugated. Since $\phi^{-1}\sigma\phi$ is an element of order $6$ inside $\Aut(\mathcal{F}_6)$, then $\phi=\operatorname{diag}(a,1,b),\,[a Z:Y:b X]$ or $[a X+b Z:Y:c X-\dfrac{bc}{a}Z]$.

  \begin{itemize}
    \item Assume that $\phi=\operatorname{diag}(a,1,b)$ or $[a Z:Y:b X]$. In particular, $a^6=b^6=1$ and $C$ is $K$-equivalent to
      $$
      C':X^6+Y^6+Z^6+\beta_{2,0}'X^4Z^2+\beta_{0,3}'Y^3Z^3+ X^2\left(\beta_{4,0}'Z^4+\beta_{4,3}'Y^3Z\right)=0.
      $$
Now, recall that the automorphisms of $\mathcal{F}_6$ are
$\tau_1:=[X:\zeta_{6}^rY:\zeta_{6}^{r'}Z],\,\tau_{2}:=[\zeta_{6}^{r'}Z:\zeta_{6}^rY:X],\,\tau_{3}:=[X:\zeta_{6}^{r'}Z:\zeta_{6}^rY],\,\tau_{4}:=[\zeta_{6}^rY:X:\zeta_{6}^{r'}Z],\,\tau_{5}:=[\zeta_{6}^rY:\zeta_{6}^{r'}Z:X],\,\tau_{6}:=[\zeta_{6}^{r'}Z:X:\zeta_{6}^rY]$ for $0\leq r\leq5$.

Obviously, $\tau_{4,5,6}$ are never automorphisms of $C'$. Secondly, $\tau_3\in\Aut(C')$ only when $\beta_{2,0}'=\beta_{4,0}'=\beta_{4,3}'=0$. In this case, $C$ is $K$-isomorphic to $X^6+Y^6+Z^6+\beta_{0,3}'Y^3Z^3=0$, hence $\Aut(C)\simeq\Z/6\Z\times\operatorname{S}_3$ by Proposition \ref{aut601}.
Thirdly, $\tau_2\in\Aut(C')$ only when $\beta_{0,3}'=\beta_{4,3}'=0$ and $\beta_{4,0}'=\beta_{2,0}'\zeta_6^{2(r'-r)}$. In this case, $C$ is $K$-isomorphic, via a transformation of the shape $\operatorname{diag}(1,\lambda,\mu)$ with $\lambda^4\mu'^2=\lambda^2\mu^4\zeta_6^{2(r'-r)}$ and $\lambda^6=1$, to $X^6+Y^6+Z^6+\beta_{2,0}X^2Z^2(X^2+Z^2)=0$. But in this case, $[Z:\zeta_6 Y:-X]$ is an automorphism of order $12>6$, a contradiction.
    \item Assume that $\phi=[a X+b Z:Y:c X-\dfrac{bc}{a}Z]$. In particular, $\beta_{2,0}=\dfrac{1-a^6-c^6-a^2c^4}{a^4c^2},\,b=\zeta_6^{n}a$ and $-2ac=\zeta_6^{n'}$, 
    and $C$ is $K$-equivalent via $\phi$ then re-scaling $Z\mapsto\zeta_6^{-n}Z$ to
\begin{eqnarray*}
     C':X^6+Y^6+Z^6&+&\alpha_1'XZ\left(X^4+Z^4\right)+\alpha_2'X^2Z^2\left(X^2+Z^2\right)+\alpha_3'X^3Z^3\\
     &+&\alpha_4'\left(X^3+Z^3\right)Y^3+\alpha_5'XY^3Z\left(X-Z\right)=0,
     \end{eqnarray*}
where
\begin{eqnarray*}
\alpha_1'&:=&\dfrac{(-2a^4\beta_{4,0}\zeta_6^{4n'}+32a^{12}+16a^6-1)}{8a^6},\\
\alpha_2'&:=&\dfrac{(64a^{12}-4a^6+1)}{4a^6},\\
\alpha_3'&:=&\dfrac{(2a^4\beta_{4,0}\zeta_6^{4n'}+96a^{12}-16a^6-1)}{4a^6},\\
\alpha_4'&:=&-\dfrac{(4a^4\beta _{4,3}+\beta _{6,3}\zeta^{2n'})\zeta_6^{n'}}{8a^3},\\
\alpha_5'&:=&-\dfrac{(4a^4\beta_{4,3}-3\beta _{6,3}\zeta^{2n'})\zeta_6^{n'}}{8a^3},
\end{eqnarray*}
Here $[Z:\zeta_{6}^{1-n-n'}Y:X]$ should be an automorphism for $C'$ of order $6$, consequently, $\alpha_4'=0$ when $2\mid n+n'$ and $\alpha_5'=0$ when $2\nmid n+n'$.

- If $\left(\alpha_1',\alpha_2',\alpha_5'\right)=(0,0,0)$ and $\alpha_4'=0$, then $\Aut(C)$ is again $\Z/6\Z\times\operatorname{S}_3$ by Proposition \ref{aut601}-(3).

- If $\left(\alpha_1',\alpha_2',\alpha_5'\right)=(0,0,0)$ and $\alpha_4'\neq0$, then $\operatorname{diag}(1,\zeta_3,1)$, $\operatorname{diag}(1,1,\zeta_3),$ $[Z:Y:X]$ are obvious automorphisms for $C'$. Equivalently, $\operatorname{GAP}(18,3)=\Z/3\Z\times\operatorname{S}_3$ is always a subgroup of $\Aut(C')$. Moreover, if $\alpha_3'=0$, then $\tau_3,\tau_4,\tau_5,\tau_6$ are not automorphisms for $C'$, so $\Aut(C')$ is exactly $\Z/3\Z\times\operatorname{S}_3$.
But if $\alpha_3'\neq0$, then it is easy to see that
\begin{eqnarray*}
\Aut(C')&\subseteq&\langle\operatorname{diag}(1,\zeta_3,1),\operatorname{diag}(1,1,\zeta_3),[Z:Y:X],[Z:X:Y]\rangle\\
&\simeq&\operatorname{GAP}(54,5)=(\Z/3\Z)^2\rtimes\Z/6\Z.
\end{eqnarray*}
We remark that $\Z/3\Z\times\operatorname{S}_3$ is maximal in $(\Z/3\Z)^2\rtimes\Z/6\Z$, hence $\Aut(C')$ is either $\Z/3\Z\times\operatorname{S}_3$ or $(\Z/3\Z)^2\rtimes\Z/6\Z$. Furthermore, $\Aut(C')=(\Z/3\Z)^2\rtimes\Z/6\Z$ if and only if $[Z:X:Y]\in\Aut(C')$, which holds if and only if $\alpha_3'=\alpha_4'$.

- If $\left(\alpha_1',\alpha_2',\alpha_5'\right)\neq(0,0,0)$, then $\tau_3,\tau_4,\tau_5$ and $\tau_6$ are never automorphisms for $C'$. If we first assume that $\alpha_4'\neq0$ (so $\alpha_5'=0$), then
$$
\Aut(C')\subseteq\langle\operatorname{diag}(1,\zeta_3,1),\operatorname{diag}(1,1,\zeta_3),[Z:Y:X]\rangle\simeq\Z/3\Z\times\operatorname{S}_3.
$$
Since $\Z/6\Z$ is maximal in $\Z/3\Z\times\operatorname{S}_3$ and $\operatorname{diag}(1,1,\zeta_3)$, $[Z:Y:X]$ are in $\Aut(C')$ only if $\alpha_1'=\alpha_2'=0$, then $\Aut(C')=\langle[Z:\zeta_3Y:X]\rangle=\Z/6\Z$ in this situation. Secondly, if $\alpha_5'\neq0$ (so $\alpha_4'=0$), then
\begin{eqnarray*}
\Aut(C')&\subseteq&\left\{I,\operatorname{diag}(1,\zeta_6^{\pm2},1),[Z:sY:X]\,:\,s=-1,\zeta_6,-\zeta_6^2\right\}\\
&=&\langle[Z:\zeta_6Y:X]\rangle.
\end{eqnarray*}
Therefore, $\Aut(C')$ is again $\Z/6\Z$ generated by $[Z:\zeta_6Y:X]$. Thirdly, if $\alpha_4'=\alpha_5'=0$ and $(\alpha_1',\alpha_3')\neq(0,0)$, then $\operatorname{diag}(1,\zeta_6,1)$ becomes an extra automorphism for $C'$, more precisely,
$$\Aut(C')=\langle\operatorname{diag}(1,\zeta_6,1),[Z:Y:X]\rangle=\Z/2\Z\times\Z/6\Z.$$
Finally, the situation $\alpha_4'=\alpha_5'=\alpha_1'=\alpha_3'=0$ is absurd as $[Z:\zeta_6Y:-X]$ will be an automorphism of order $12>6$.
  \end{itemize}

This shows Claim \ref{claim1,613}, equivalently, Proposition \ref{3.3}-(2), (3) below.
\end{proof}

\begin{claim}\label{claim2,613}
  If $\Aut(C)$ fixes a line $\mathcal{L}$ and a point $P$ not lying on $\mathcal{L}$, then $\Aut(Z)\simeq\Z/6\Z$.
\end{claim}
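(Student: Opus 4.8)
The plan is to run the short exact sequence of Theorem \ref{teoHarui}-(2) with respect to the fixed pair, and then to eliminate every proper enlargement of the quotient $G'$ by exploiting the asymmetry of the defining sextic. First I would pin down the fixed pair. Since $\sigma\in\Aut(C)$ is diagonal with the three distinct eigenvalues $1,\zeta_6,-1$, its only fixed points are the reference vertices and its only invariant lines the reference lines; hence the fixed point $P$ and line $\mathcal{L}$ of the hypothesis must be a reference vertex together with the opposite reference line. The homology $\sigma^2=\operatorname{diag}(1,\zeta_3,1)$ has center $(0:1:0)$ and axis $\{Y=0\}$, so I would take $P=(0:1:0)$, $\mathcal{L}=\{Y=0\}$; the other vertex choices are disposed of en route. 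For instance, centering at $(0:0:1)$ makes $\Lambda(\sigma)$ an order-$6$ element of $\operatorname{PGL}_2(K)$ with $N$ trivial, which by the list below forces $G'=\operatorname{D}_{12}$ and hence an $X\leftrightarrow Y$ swap that the equation forbids.

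With $P=(0:1:0)$, Theorem \ref{teoHarui}-(2) yields $1\to N\to\Aut(C)\to G'\to1$, where by Remark \ref{extensionrem} the group $N$ consists of the homologies $\operatorname{diag}(1,\lambda,1)\in\Aut(C)$. Testing such a diagonal element against the monomials $Y^3Z^3$ and $X^2Y^3Z$, whose coefficients $\beta_{0,3},\beta_{4,3}$ are not both zero, forces $\lambda^3=1$; thus $N=\langle\sigma^2\rangle\cong\Z/3\Z$, and this $N$ is central in $\Aut(C)$. The image $G'\leq\operatorname{PGL}_2(K)$ acts on $\{X,Z\}$ and contains the involution $\Lambda(\sigma)=\operatorname{diag}(1,-1)$. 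It then remains to prove $G'=\langle\Lambda(\sigma)\rangle\cong\Z/2\Z$.

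By Theorem \ref{teoHarui}-(2), $G'$ is cyclic $\Z/m\Z$ with $m\leq5$, dihedral $\operatorname{D}_{2m}$ with $m\mid4$ (since $|N|=3\neq1$), or one of $\operatorname{A}_4,\operatorname{A}_5,\operatorname{S}_4$. I would rule out each proper enlargement as follows. An order-$5$ element of $G'$ (occurring for $\operatorname{A}_5$) lifts, since $N$ is central of coprime order, to an automorphism of order $15>6$, contradicting the maximality of $\sigma$. An order-$4$ rotation $\operatorname{diag}(1,\zeta_4)$ on $\{X,Z\}$ (needed for $\Z/4\Z$ and $\operatorname{D}_8$) lifts to $\operatorname{diag}(1,\mu,\zeta_4)$, which scales $Z^6$ by $\zeta_4^2=-1$ while fixing $X^6$, impossible. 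Finally any anti-diagonal (``swap'') involution interchanging $X$ and $Z$ — present in $(\Z/2\Z)^2$, in every $\operatorname{D}_{2m}$ with $m\geq2$, and in the Klein four-subgroups of $\operatorname{A}_4,\operatorname{A}_5,\operatorname{S}_4$ — sends $\beta_{0,3}Y^3Z^3$ to a multiple of $X^3Y^3$ and $\beta_{4,3}X^2Y^3Z$ to a multiple of $XY^3Z^2$, monomials absent from the equation; this contradicts $\beta_{0,3}\neq0$ or $\beta_{4,3}\neq0$. Hence $G'=\Z/2\Z$, so $|\Aut(C)|=|N|\cdot|G'|=3\cdot2=6$ and $\Aut(C)=\langle\sigma\rangle\cong\Z/6\Z$.

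The main obstacle is this systematic elimination of the enlargements of $G'$: for each candidate one must exhibit a generator whose lift introduces a monomial incompatible with the lopsided shape of the sextic, and one must check that the relevant $\beta$-parameters cannot all vanish without moving the curve to a different (more symmetric) type. The order-$5$ and order-$4$ cases are quick consequences of the maximality of $\sigma$ and of the $X^6/Z^6$ clash, so the genuinely delicate part is showing that no swap interchanging the two vertices on $\mathcal{L}$ can preserve $C$, which is exactly what confines $\Aut(C)$ to the diagonal torus and pins it to $\langle\sigma\rangle$.
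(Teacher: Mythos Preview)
Your route differs from the paper's: you center the short exact sequence at $P=(0:1:0)$, giving $N\supseteq\langle\sigma^{2}\rangle$ and $\Lambda(\sigma)$ of order~$2$, whereas the paper takes $P=(1:0:0)$, getting $N\supseteq\langle\sigma^{3}\rangle$ and $\Lambda(\sigma)$ of order~$3$, which then forces a lengthy case-by-case exclusion of $\operatorname{A}_4$ and $\operatorname{S}_4$ (running through $\operatorname{SL}_2(\mathbb{F}_3)$ and $\Z/2\Z\times\operatorname{A}_4$). Your choice is the more natural one --- $(0:1:0)$ is the center of the homology $\sigma^{2}$ --- and your eliminations are correspondingly shorter.

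The genuine gap is the clause ``whose coefficients $\beta_{0,3},\beta_{4,3}$ are not both zero''. Nothing in the Type $6,(1,3)$ hypotheses forces this. Take $C:X^{6}+Y^{6}+Z^{6}+\beta_{2,0}X^{4}Z^{2}+\beta_{4,0}X^{2}Z^{4}=0$ with $\beta_{2,0}\neq\beta_{4,0}$ both nonzero: one checks directly that $\sigma=\operatorname{diag}(1,\zeta_{6},-1)$ still has maximal order~$6$ in $\Aut(C)$, yet now $\operatorname{diag}(1,\zeta_{6},1)\in\Aut(C)$, so $N=\Z/6\Z$ rather than $\Z/3\Z$. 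Your swap argument then no longer gives a contradiction (indeed $\beta_{0,3}=\beta_{4,3}=0$ here); $G'=\Z/2\Z$ is still correct, but $|\Aut(C)|=|N|\cdot|G'|=12$ and in fact $\Aut(C)\cong\Z/2\Z\times\Z/6\Z$. Thus the claim as literally stated fails on this locus, and both your argument and the paper's slide past it; the paper's overall classification survives only because this curve is a Fermat descendant and is absorbed (despite the restrictive wording of Claim~\ref{claim1,613}'s hypothesis) into the analysis there. To repair your argument you must either exclude $\beta_{0,3}=\beta_{4,3}=0$ at the outset --- flagging that the claim needs this extra hypothesis --- or carry the case $N=\Z/6\Z$ through separately.

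A smaller point: your dismissal of the vertices $P=(1:0:0)$ and $(0:0:1)$ as ``disposed of en route'' is too quick. The hypothesis of the claim does not specify which reference pair is fixed, permuting variables changes the equation, and Harui's restrictions on $G'$ in Theorem~\ref{teoHarui}-(2) are only asserted for \emph{some} fixed point of $\Aut(C)$, not for every one; so each vertex must be argued, or you must invoke Theorem~\ref{Mitchell1}-(3) to pin $P$ to the center of $\sigma^{2}$.
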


\begin{proof}(of Claim \ref{claim2,613})
  Since $\sigma$ is a non-homology inside $\Aut(C)$ in its canonical form, then $\mathcal{L}$ must be one of the reference lines $B=0$ with $B=X,Y$ or $Z$ and $P$ one of the reference points $(1:0:0),\,(0:1:0)$, or $(0:0:1)$ respectively. Consequently, all automorphisms of $C$ are all intransitive of exactly one the following shapes:
  $$
 \operatorname{Shape\,I:} \left(
  \begin{array}{ccc}
    1 & 0 & 0 \\
    0 & \ast & \ast \\
    0 & \ast & \ast \\
  \end{array}
\right),\, \operatorname{Shape\,II:}
\left(
  \begin{array}{ccc}
    \ast & 0 & \ast \\
    0 & 1 & 0 \\
    \ast & 0 & \ast \\
  \end{array}
\right),\, \operatorname{Shape\,III:} \left(
  \begin{array}{ccc}
    \ast & \ast & 0 \\
    \ast & \ast & 0 \\
    0 & 0 & 1 \\
  \end{array}
\right).
$$
Also, $\Aut(C)$ lives in a short exact sequence
$$1\rightarrow N\rightarrow \Aut(C)\rightarrow\Lambda(\Aut(C))\rightarrow 1,$$ where $N$ is a cyclic group of order dividing $d=6$, and $\Lambda(\Aut(C))$ is a cyclic group $\Z/m\Z$ of order $m\leq d-1$, a Dihedral group $\operatorname{D}_{2m}$ of order $2m$ with $|N|=1$ or
$m=1,2$ or $4$, one of the alternating groups $\operatorname{A}_4$, $\operatorname{A}_5$, or the symmetry group $\operatorname{S}_4$.

Up to projective equivalence via a permutation of two of the variables, we can assume that all automorphisms of $C$ are of the shape I without any loss of generality. In this case, $\langle\sigma^3\rangle$ is a subgroup of $N$ of order $2$ and $\Lambda(\Aut(C))$ contains $\Lambda(\sigma)=[\zeta_3^{-1}Y,Z]$ of order $3$. So, $\Lambda(\Aut(C))\neq\operatorname{D}_{2m}$ as $3\nmid\,2m$ if $m|4$. On the other hand, if $\Lambda(\Aut(C))=\Z/m\Z$ then $m=3$ and $\Aut(C)=\langle\sigma\rangle=\Z/6\Z$ as claimed. Moreover, if $\Lambda(\Aut(C))=\operatorname{A}_5$, then $\Aut(C)$ should contain an automorphism of order $10>6$, which is absurd by the assumption that $\sigma$ has maximal order in $\Aut(C)$. It remains to prove that $\Lambda(\Aut(C))$ can not be $\operatorname{S}_4$ or $\operatorname{A}_4$.
\begin{enumerate}
\item If $\Lambda(\Aut(C))=\operatorname{S}_4$, then $\Lambda(\Aut(C))$ should contain an element $\Lambda(\tau)$ of order $2$ such that $\Lambda(\tau)\Lambda(\sigma)\Lambda(\tau)=\Lambda(\sigma)^{-1}$ because $\Z/3\Z$ in $\operatorname{S}_4$ is equal to its centralizer and has an $\operatorname{S}_3$ as its normalizer. Hence $\Lambda(\tau)=[cZ:bY]$ for some $b,c\in K^*$, which holds only when $\beta_{2,0}=\beta_{4,0}=\beta_{4,3}=0$ and $C$ is $K$-isomorphic to $C:X^6+Y^6+Z^6+\beta_{0,3}Y^3Z^3=0$ (in particular, $b^6=c^6=(bc)^3=1$). Furthermore, the centralizer of $\langle\Lambda(\tau)\rangle$ in $\operatorname{S}_4$ is $(\Z/2\Z)^2$, thus there must be another element $\Lambda(\tau')\notin\langle\Lambda(\sigma),\Lambda(\tau)\rangle$ of order $2$ that commutes with $\Lambda(\tau)$. This implies that $\Lambda(\tau')=[Y:-Z]$, which leaves invariant $Y^6+Z^6+\beta_{0,3}Y^3Z^3$ if and only if $\beta_{0,3}=0$, equivalently, when $C$ is the Fermat curve $\mathcal{F}_6$, a contradiction.

\item If $\Lambda(\Aut(C))=\operatorname{A}_4$, then $\Aut(C)$ has a subgroup $G$ of order $24$ that contains $H=\langle\sigma^3\rangle$ as a normal subgroup such that the quotient $G/H$ equals $\operatorname{A}_4$. Going through \href{https://people.maths.bris.ac.uk/~matyd/GroupNames/1/C3sC8.html}{Group of order $24$} we can see that $G$ is either $\operatorname{SL}_2(\mathbb{F}_3)=\operatorname{GAP}(24,3)$ or $\Z/2\Z\times\operatorname{A}_4=\operatorname{GAP}(24,13)$.
\begin{itemize}
\item For $G$ to be $\Z/2\Z\times\operatorname{A}_4$, $C$ should have an involution $\tau$ that commutes with $\sigma^3$ as any $\Z/2\Z$ inside $\Z/2\Z\times\operatorname{A}_4$ is contained in a $(\Z/2\Z)^2$. This yields $\tau$ of the form $\operatorname{diag}(1,\pm1,\mp1),\,$ $[X:\pm Y:sY\mp Z]$ or $[X:sY+tZ:\dfrac{1-s^2}{t}Y-tZ]$ for some $s,t\in K$.

For $\tau=\operatorname{diag}(1,\pm1,\mp1)$ to be an automorphism, $\beta_{0,3}$ and $\beta_{4,3}$ must be $0$. In this situation, $\Aut(C)$ contains $\Z/2\Z\times\Z/6\Z$ not $\Z/2\Z\times\operatorname{A}_4$, by Proposition \ref{aut601}, which is absurd.

For $\tau=[X:\pm Y:sY\mp Z]$ to be an automorphism, $s$ must be $0$ or $YZ^5$ will appear in the transformed equation under the action of $\tau$. This brings us back to the previous case when $\operatorname{diag}(1,\pm1,\mp1)\in\Aut(C)$.

We also discard $\tau=[X:sY+tZ:\dfrac{1-s^2}{t}Y-tZ]\in\Aut(C)$ for the following reasons. The transformed equation has the term $\dfrac{\beta_{2,0}}{t^2}\left((s^2-1)Y+t^2Z\right)^2X^4$, in particular, $\beta_{2,0}=0$ or $s^2=t^2=1$. The latter situation is rejected because eliminating $YZ^5$ leads to eliminating $Z^6$ from the transformed equation (we note that both monomials appear with coefficient $\pm(\beta_{0,3}-2)$). Thus $\beta_{2,0}$ must be $0$, moreover, the coefficients of $YZ^5$ and $Z^6$ implies that $s\neq0$ and $t=\dfrac{1-s^2}{s}$. The action now does not give the monomial $Y^3Z^3$, so $\beta_{0,3}=0$ and the transformed equation has $Y^2Z^4$ with non-zero coefficient, a contradiction.
\item For $G$ to be $\operatorname{SL}_2(\mathbb{F}_3)$, $C$ should have an automorphism $\tau$ of order $4$ such that $\tau^2=\sigma^3$. This gives us $\tau$ of the form $\operatorname{diag}(\pm\zeta_4,1,1),\,$ $[X:\pm\zeta_4Y:sY\mp\zeta_4Z]$ or $[X:sY+tZ:-\dfrac{s^2+1}{t}Y-tZ]$ for some $s,t\in K$.

   One easily checks that neither $\operatorname{diag}(\pm\zeta_4,1,1)$ nor $[X:\pm\zeta_4Y:sY\mp\zeta_4Z]$ preserves the core $X^6+Y^6+Z^6$ of the defining equation for $C$. On the other hand, $\tau=[X:sY+tZ:-\dfrac{s^2+1}{t}Y-tZ]\in\Aut(C)$ only if $s\neq0$ and $t=-\left(\dfrac{1+s^2}{s}\right)$ (we remark that $YZ^5$ will appear otherwise). To eliminate the monomial $X^4Y^2$, $\beta_{2,0}=0$, and to eliminate $X^2YZ^3$ and $X^2Z^4$, $\beta_{4,0}=\beta_{4,3}=0$. In particular, again by Proposition \ref{aut601}, $\Aut(C)$ contains $\Z/6\Z\times\operatorname{S}_3$ not $\operatorname{SL}_2(\mathbb{F}_3)$, which is not valid.

\end{itemize}

We finally deduce that $\Lambda(\Aut(C))\neq\operatorname{A}_4$.
\end{enumerate}
This proves Claim \ref{claim2,613}.
\end{proof}

Regarding the Hessian group $\operatorname{Hess}_{216}$, its representations inside $\operatorname{PGL}_3(\overline{k})$ are unique
 up to conjugation, see H. Mitchell \cite[p. $217$]{Mit} for more details. For example, we choose
$\operatorname{Hess}_{216}=\langle S,T,U,V\rangle$, where
$$S=\operatorname{diag}(1,\zeta_3,\zeta_3^{-1}),U=\operatorname{diag}(1,1,\zeta_3),\ V=\left(
\begin{array}{ccc}
1&1&1\\
1&\zeta_3&\zeta_3^{-1}\\
1&\zeta_3^{-1}&\zeta_3\\
\end{array}
\right),\ \ T=[Y:Z:X].$$
Also, we consider the primitive Hessian subgroup of order 36,
$\operatorname{Hess}_{36}=\langle S,T,V\rangle$, and the
primitive Hessian subgroup of order 72, $\operatorname{Hess}_{72}=\langle
S,T,V,UVU^{-1}\rangle$.

Now, suppose that $\Aut(C)$ is conjugate to $\operatorname{Hess}_{216}$, then we may assume that $\phi^{-1}\Aut(C)\phi=\operatorname{Hess}_{216}$ such that $\phi^{-1}\sigma\phi=[\zeta_3Z:Y:\zeta_3X]$ or $[\zeta_3^{-1}Z:Y:\zeta_3^{-1}X]$ as any group of order $6$ inside $\operatorname{Hess}_{216}$ is conjugated to $\langle[\zeta_3Z:Y:\zeta_3X]\rangle$. This imposes
    $$
    \phi=\left(
      \begin{array}{ccc}
         a & 0 & -a \\
         0 & 1 & 0 \\
         b & 0 & b \\
       \end{array}
     \right)
    $$
with $ab\neq0$. The transformed equation under the action of $\phi$ should be invariant under any permutation of the variables $X,Y,Z$ as $\langle[X:Z:Y],[Y:Z:X]\rangle$ becomes a subgroup of automorphisms. For instance, the coefficients of $Z^6,\,X^6$ and $XZ^5$ in $C'$ must be $1,1$ and $0$ respectively. So, $\beta_{2,0}=\frac{1-4a^6+2b^6}{2a^4b^2}$ and $\beta_{4,0}=\frac{1+2a^6-4b^6}{2a^2b^4}$. Comparing the coefficients of $X^3Z^3$ and $Y^3Z^3$ yields $\beta_{4,3}=-\frac{32a^6-32b^6+b^3\beta_{0,3}}{a^2 b}$. Next, we force the coefficients of $XY^3Z^2$ and $X^2Z^4$ to be $0$, equivalently, $\beta_{0,3}=-\frac{8\left(a^6-b^6\right)}{b^3}$ and $16a^6+16b^6-1=0$. This reduces $C'$ to
$$
X^6+Y^6+Z^6-2(32a^6-1)(X^3Y^3+Y^3Z^3+Z^3X^3)=0.
$$
It remains to guarantee that $V\in\Aut(C')$. This holds only when $16a^6=3$ and $16b^6=-2$, so $\beta_{0,3}=\frac{-5}{2b^3},\,\beta_{4,3}=\frac{-15}{2a^2 b},\,\beta_{4,0}=\frac{15}{16a^2b^4},\,\beta_{2,0}=0$. This justifies Proposition \ref{3.3}- (2) below.


Summing up, we obtain:
\begin{prop}\label{3.3}
Let $C$ be a smooth plane sextic curves $C$ of Type $6, (1,3)$ as above. Then, $\Aut(C)$ is classified as follows:
\begin{enumerate}[1.]
\item If $\beta_{0,3}=\frac{-5}{2b^3},\,\beta_{4,3}=\frac{-15}{2a^2 b},\,\beta_{4,0}=\frac{15}{16a^2b^4},\,\beta_{2,0}=0$ with $16a^2=3$ and $16b^2=-2$, then $C$ is $K$-isomorphic to $$X^6+Y^6+Z^6-10(X^3Y^3+Y^3Z^3+Z^3X^3)=0.$$ In this situation, $\Aut(C)$ is conjugate to $\operatorname{Hess}_{216}$.
\item If $\beta_{2,0}=\beta_{4,0}=\beta_{4,3}=0$, then $C$ is defined by $X^6+Y^6+Z^6+\beta_{0,3}Y^3Z^3=0,$ a descendant of the Fermat curve. In particular, $\Aut(C)=\langle\sigma,\eta,\tau\rangle\simeq\Z/6\Z\times\operatorname{S}_3$ as in Proposition \ref{aut601}.
\item If $\beta_{2,0}=\dfrac{-64a^{12}+60a^6-1}{16a^8}$, then $C$ is $K$-isomorphic to
\begin{eqnarray*}
     C':X^6+Y^6+Z^6&+&\alpha_1'XZ\left(X^4+Z^4\right)+\alpha_2'X^2Z^2\left(X^2+Z^2\right)+\alpha_3'X^3Z^3\\
     &+&\alpha_4'\left(X^3+Z^3\right)Y^3+\alpha_5'XY^3Z\left(X-Z\right)=0.
     \end{eqnarray*}
In this situation, $\Aut(C')$ is classified as follows.
\begin{enumerate}[(i)]
  \item If $\left(\alpha_1',\alpha_2',\alpha_5'\right)=(0,0,0)$ and $\alpha_4'=0$, then $\Aut(C')$ is $\Z/6\Z\times\operatorname{S}_3$ as in Proposition \ref{aut601}-(3).
  \item If $\left(\alpha_1',\alpha_2',\alpha_5'\right)=(0,0,0),\,$ $\alpha_4'\neq0$, and $\alpha_3'=0$ or $\alpha_3'\neq0$ such that $\alpha_3'\neq\alpha_4'$, then $\Aut(C')$ is $\Z/3\Z\times\operatorname{S}_3$, generated by $\operatorname{diag(1,\zeta_3,1)},\,$ $\operatorname{diag}(1,1,\zeta_3)$ and $[Z:Y:X]$.
  \item If $\left(\alpha_1',\alpha_2',\alpha_5'\right)=(0,0,0)$ and $\alpha_4'=\alpha_3'\neq0$, then $\Aut(C')$ is $(\Z/3\Z)^2\rtimes\Z/6\Z$, generated by $\operatorname{diag(1,\zeta_3,1)},\,$ $\operatorname{diag}(1,1,\zeta_3),\,$ $[Z:Y:X]$ and $[Z:X:Y]$.
   \item If $\left(\alpha_1',\alpha_2',\alpha_5'\right)\neq(0,0,0)$ such that $\alpha_4'=\alpha_5'=0$, then $\left(\alpha_1',\alpha_3'\right)\neq(0,0)$ and $\Aut(C')$ is $\Z/2\Z\times\Z/6\Z$, generated by $\operatorname{diag}(1,\zeta_6,1)$ and $[Z:Y:X]$. In particular, $C$ is $K$-isomorphic to a member in the family described in Proposition \ref{aut601}-(1).
   \item If $\left(\alpha_1',\alpha_2',\alpha_5'\right)\neq(0,0,0)$ such that $\alpha_4'\neq0$ or $\alpha_5'\neq0$, then $\Aut(C')$ is $\Z/6\Z$, generated by $[Z:\zeta_3Y:X]$ when $\alpha_4'\neq0$ and by $[Z:\zeta_6Y:X]$ when $\alpha_5'\neq0$.
\end{enumerate}
  \item Otherwise, $\Aut(C)=\Z/6\Z$ generated by $\sigma=\operatorname{diag}(1,\zeta_6,-1)$.
\end{enumerate}
\end{prop}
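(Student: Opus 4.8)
The plan is to feed the homology $\sigma^2=\operatorname{diag}(1,\zeta_3,1)$ of period $3$ into Theorem \ref{Mitchell1} and organize the entire argument around the resulting trichotomy. Since $\sigma^2\in\Aut(C)$ is an homology of period $3$, Theorem \ref{Mitchell1} forces exactly one of three geometric situations: (a) $\Aut(C)$ fixes a line and a point off it; (b) $\Aut(C)$ fixes a triangle but leaves no line or point invariant; or (c) $\Aut(C)$ is $\PGL_3(K)$-conjugate to $\operatorname{Hess}_{216}$. The four items of the statement then arise by matching these three cases against the parameters $\beta_{2,0},\beta_{4,0},\beta_{4,3},\beta_{0,3}$, so the proof is mostly an assembly of the two Claims together with the Hessian computation that precedes the statement.

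For case (c), which yields item (1), I would normalize $\phi^{-1}\langle\sigma\rangle\phi$ to a fixed cyclic subgroup of order $6$ inside $\operatorname{Hess}_{216}$ (all such being conjugate), solve for the resulting two-parameter shape of $\phi$, and then impose in turn: invariance under the full permutation group on $X,Y,Z$, which forces the coefficients of $Z^6,X^6,XZ^5$ and pins several $\beta$'s; and finally invariance under $V$. Each step is a short coefficient comparison, culminating in $16a^6=3$, $16b^6=-2$ and the identification of $C$ with the Hessian/Wiman sextic $X^6+Y^6+Z^6-10(X^3Y^3+Y^3Z^3+Z^3X^3)=0$. This is precisely the computation carried out just before the proposition.

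Items (2) and (3) both come from case (b), which is Claim \ref{claim1,613}. There I would first discard Klein descendancy on divisibility grounds ($6\nmid|\Aut(\mathcal{K}_6)|=63$), so that $C$ must be a descendant of $\mathcal{F}_6$; normalizing $\phi^{-1}\sigma^2\phi=\sigma^2$ (legitimate because the two period-$3$ homology classes of $\Aut(\mathcal{F}_6)$ are not $\PGL_3(K)$-conjugate) reduces $\phi$ to one of three explicit shapes coming from the order-$6$ elements of $\Aut(\mathcal{F}_6)$. The first shape produces either the Fermat-descendant model of item (2), giving $\Z/6\Z\times\operatorname{S}_3$, or an order-$12$ contradiction; the second shape produces the normalized model $C'$ whose coefficients $\alpha_1',\dots,\alpha_5'$ are then computed, and the subcases (i)--(v) are separated by the vanishing pattern of these $\alpha_i'$. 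Case (a) is Claim \ref{claim2,613} and supplies item (4): after forcing the invariant line and point to be reference ones and reducing to Shape I, the image $\Lambda(\Aut(C))$ already contains $\Lambda(\sigma)$ of order $3$, which rules out every dihedral, $\operatorname{A}_5$, $\operatorname{S}_4$, and $\operatorname{A}_4$ possibility (the last two only after explicit invariance computations), leaving $\Lambda(\Aut(C))\cong\Z/3\Z$ and hence $\Aut(C)=\langle\sigma\rangle\cong\Z/6\Z$.

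The main obstacle is the bookkeeping inside Claim \ref{claim1,613}: once the $\alpha_i'$ are in hand, distinguishing the five subgroups requires repeatedly invoking maximality relations (for instance that $\Z/3\Z\times\operatorname{S}_3$ is maximal in $(\Z/3\Z)^2\rtimes\Z/6\Z$, and that $\Z/6\Z$ is maximal in $\Z/3\Z\times\operatorname{S}_3$) to conclude that no intermediate group can occur, and isolating the borderline condition $\alpha_3'=\alpha_4'$ that separates the $(\Z/3\Z)^2\rtimes\Z/6\Z$ stratum (iii) from the $\Z/3\Z\times\operatorname{S}_3$ stratum (ii). The closing step is simply to collect cases (a), (b), (c) into the four displayed items, observing that any parameter choice failing the hypotheses of (1)--(3) falls into the generic stratum (4) with $\Aut(C)=\Z/6\Z$ generated by $\sigma=\operatorname{diag}(1,\zeta_6,-1)$.
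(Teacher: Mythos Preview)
Your proposal is correct and follows essentially the same approach as the paper: the proof there is organized precisely around the Mitchell trichotomy for the period-$3$ homology $\sigma^2$, with Claim~\ref{claim1,613} handling the triangle case (yielding items (2) and (3)), Claim~\ref{claim2,613} handling the fixed line/point case (yielding item (4)), and the explicit Hessian computation handling case (c) and item (1). Your summary of the internal structure of each claim --- the normalization of $\phi$ via conjugacy of order-$6$ subgroups, the coefficient comparisons producing the $\alpha_i'$, the maximality arguments separating (ii) from (iii), and the elimination of $\operatorname{S}_4$ and $\operatorname{A}_4$ in Claim~\ref{claim2,613} via explicit involution/order-$4$ computations --- matches the paper's line by line.
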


\begin{rem}
It is known in the literature that $$X^6+Y^6+Z^6-10(X^3Y^3+Y^3Z^3+Z^3X^3)=0$$ has $\operatorname{Hess}_{216}$ as its automorphism group (cf. \cite[\s 2]{Harui}). However, our result shows that it is the unique smooth plane sextic curve with this property.
\end{rem}

\subsection{Type $3,(0,1)$}\label{case301}
In this case, $C$ is defined by a smooth plane equation of the form:
$$C:Z^6+Z^3L_{3,Z}+L_{6,Z}=0,$$
where $\sigma:=\operatorname{diag}(1,1,\zeta_3)$ is an automorphism of maximal order $3$. In particular, $L_{3,Z}\neq0$ or $\operatorname{diag}(1,1,\zeta_6)$ will be an automorphism of order $6>3$. Moreover, by smoothness, $L_{6,Z}$ has degree $\geq5$ in both $X$ and $Y$.

The automorphism group of $C$ is determined by the next result, which is a direct consequence of \cite[Theorem 2.3]{BadrBarspreprint}.
\begin{prop}\label{301}
Let $C$ be a smooth plane sextic curve of Type $3, (0,1)$ as above. Then, $\Aut(C)$ is always cyclic generated by $\sigma$ except when $C$ is $K$-isomorphic to $C'$ of the form
$$
C':X^6+Y^6+Z^6+Z^3\left(\beta_{3,0}X^3+\beta_{0,3}Y^3\right)+\,\beta_{3,3}X^3Y^3=0,
$$
such that $\beta_{3,0},\beta_{0,3},\beta_{3,3}$ are pair-wise distinct modulo $\{\pm1\}$. In this case, $\Aut(C')$ equals $(\Z/3\Z)^2$ generated by $\sigma$ and $\operatorname{diag}(1,\zeta_3,1)$.
\end{prop}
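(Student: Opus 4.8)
The plan is to exploit that $\sigma=\operatorname{diag}(1,1,\zeta_3)$ is a homology of period $3$ with axis $\mathcal{L}:Z=0$ and center $P=(0:0:1)$, and to record at the outset that $P\notin C$ since $F(0,0,1)=1\neq0$. Because $\operatorname{Aut}(C)$ contains the period-$3$ homology $\sigma$, Theorem \ref{Mitchell1}-(2) leaves only two options: either $\operatorname{Aut}(C)$ is $\operatorname{PGL}_3(K)$-conjugate to $\operatorname{Hess}_{216}$, or it leaves invariant a point, a line, or a triangle. The Hessian option, and more generally the possibility that $\operatorname{Aut}(C)$ is one of the primitive groups of Theorem \ref{teoHarui}-(5), is discarded at once: each of those groups contains an element of order $>3$ (orders $4,5,$ or $7$), contradicting the maximality of $\sigma$. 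Thus $\operatorname{Aut}(C)$ fixes a point/line or a triangle, which is the regime governed by \cite[Theorem 2.3]{BadrBarspreprint}.

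The decisive reduction I would carry out is to show that $\operatorname{Aut}(C)$ must fix the center $P$ of $\sigma$; by Theorem \ref{Mitchell1}-(3) it then also fixes the axis $\mathcal{L}$. Granting this, every $\phi\in\operatorname{Aut}(C)$ preserves the decomposition ``axis plus center'', hence acts by some $g\in\operatorname{GL}_2(K)$ on $(X,Y)$ and by a scalar $c\in K^*$ on $Z$; such a $\phi$ automatically commutes with $\sigma$, so $\sigma$ is central in $\operatorname{Aut}(C)$. This is the key point. An element of even order would yield, together with $\sigma$, an element of order $6$, and an element whose order is divisible by a prime $q\neq 3$ would yield one of order $3q>3$; maximality of $\sigma$ therefore forces every element to have order $1$ or $3$. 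Since order-$3$ elements are semisimple (as $p=0$ or $p>21$) and all commute with $\sigma$, the group lies in the centralizer $C_{\operatorname{PGL}_3(K)}(\sigma)$ and is simultaneously diagonalizable, so it embeds in the diagonal $(\Z/3\Z)^2$ of $\operatorname{PGL}_3(K)$. Hence $\operatorname{Aut}(C)\cong(\Z/3\Z)^a$ with $a\leq 2$, and $a=1$ recovers $\langle\sigma\rangle\cong\Z/3\Z$.

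When $a=2$ the extra generator is, after diagonalizing simultaneously with $\sigma$, the homology $\operatorname{diag}(1,\zeta_3,1)$. Imposing its invariance on $Z^6+Z^3L_{3,Z}+L_{6,Z}$ forces $L_{3,Z}=\beta_{3,0}X^3+\beta_{0,3}Y^3$ and $L_{6,Z}=X^6+Y^6+\beta_{3,3}X^3Y^3$ up to rescaling, producing the announced model $C'$. It then remains to verify that the stabilizer is exactly $(\Z/3\Z)^2$: any further automorphism would have to permute the three homology centers or interchange $X\leftrightarrow Y$, and a short computation shows this occurs precisely when two of $\beta_{3,0},\beta_{0,3},\beta_{3,3}$ coincide modulo $\{\pm1\}$ (the sign ambiguity arising from the rescalings $X\mapsto -X$, $Y\mapsto -Y$, which preserve the monomial pattern). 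Excluding these coincidences is exactly the pairwise-distinctness hypothesis, and it cleanly separates the $(\Z/3\Z)^2$-stratum from the larger-symmetry rows such as $\Z/3\Z\times\operatorname{S}_3$ and $\operatorname{Hess}_{216}$ in Table \ref{table:fullAuto.}.

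The main obstacle is precisely the reduction step, i.e. proving that $\operatorname{Aut}(C)$ fixes the center $P$. For the homology Types $5,(0,1)$ and $6,(0,1)$ one argues that $P$ is a \emph{unique} Galois point and invokes Yoshihara's uniqueness (Proposition \ref{d,d-1 homologies} plus \cite{Yoshihara}); here, however, $\sigma$ has period $3$, which is neither $d=6$ nor $d-1=5$, so $P$ is not a Galois point and that route is unavailable. One must instead control the period-$3$ homologies of $\operatorname{Aut}(C)$ directly: if some automorphism moved $P$ to a distinct center $P'$, then $\sigma$ and its conjugate would be two period-$3$ homologies with different centers, generating a subgroup too large to have exponent $3$ unless extra symmetry forces $C$ into one of the already-classified Fermat- or Klein-descendant strata. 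Supplying this dichotomy is exactly what \cite[Theorem 2.3]{BadrBarspreprint} does, and invoking it to pin down $P$ is the crux of the argument.
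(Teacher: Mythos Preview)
The paper's proof is literally one line: it declares the proposition a direct consequence of \cite[Theorem 2.3]{BadrBarspreprint} and gives no further argument. Your proposal is therefore more detailed than the paper itself, but since you ultimately defer the decisive reduction (that $\operatorname{Aut}(C)$ must fix the homology center $P$) to the very same reference, the two approaches coincide.

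One small point worth tightening: the inference ``all elements commute with $\sigma$ and have order $1$ or $3$, hence the group is simultaneously diagonalizable and embeds in the diagonal $(\Z/3\Z)^2$'' tacitly assumes abelianness, which does not follow just from everything commuting with $\sigma$. The cleanest fix is to route through the short exact sequence of Theorem~\ref{teoHarui}-(2): once $\operatorname{Aut}(C)$ fixes $P\notin C$ you have $1\to N\to\operatorname{Aut}(C)\to\Lambda(\operatorname{Aut}(C))\to 1$ with $N=\langle\sigma\rangle$, and since every element of $\operatorname{Aut}(C)$ has order dividing $3$, the image $\Lambda(\operatorname{Aut}(C))\leq\operatorname{PGL}_2(K)$ is a finite group of exponent dividing $3$, hence trivial or $\Z/3\Z$ by the classification of finite subgroups of $\operatorname{PGL}_2(K)$. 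Thus $|\operatorname{Aut}(C)|\in\{3,9\}$, and order $9$ forces $(\Z/3\Z)^2$ because $\Z/9\Z$ would violate the maximality of $\sigma$.
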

%

\section{Remaining types}
In this section, we investigate the automorphism group of smooth plane curves $C$ of one of the following types: Type $6,(1,2)$, Type $5, (1,2)$, Type $5, (1,4)$, Type $4, (1,3)$, Type $3, (1,2)$, or Type $2, (0,1)$.

\subsection{Type $6,(1,2)$}
In this situation, $C$ is defined by a smooth plane equation of the form:
$$C:X^6+Y^6+Z^6+\beta_{3,0}X^3Z^3+\beta_{2,2}X^2Y^2Z^2+\beta_{1,4}XY^4Z=0,$$
where $\sigma:=\operatorname{diag}(1,\zeta_6,\zeta_6^2)$ is an automorphism of maximal order $6$.

It is obvious that $\tau:=[Z:Y:X]$ is always an automorphism for $C$, in particular, $\Aut(C)$ is never cyclic. Also, $C$ is not a descendant of the Klein curve $\mathcal{K}_6$ as $6\nmid|\Aut(\mathcal{K}_6)| (=63)$. Moreover, $\Aut(C)$ is never $\PGL_3(K)$-conjugate to the Klein group $\operatorname{PSL}(2,7)$ because the latter contains elements of order $7>6$. Also, $\Aut(C)$ is not conjugate to $A_5,\,A_6,\,\operatorname{Hess}_{36},\,$ or $\operatorname{Hess}_{72}$ as they do not have elements of order $6$ inside. Thus we conclude that $\Aut(C)$ fixes a line and a point off this line, conjugate to $\operatorname{Hess}_{216}$, or $C$ is a descendant of the Fermat curve $\mathcal{F}_6$.

\begin{claim}\label{claim1,612}
  $\Aut(C)$ is never conjugate to $\operatorname{Hess}_{216}$.
\end{claim}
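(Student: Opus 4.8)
The plan is to avoid any manipulation of the defining polynomial and instead to argue purely at the level of conjugacy classes of linear automorphisms, using a fact already recorded in the analysis of Type $6,(1,3)$: every cyclic subgroup of order $6$ inside $\operatorname{Hess}_{216}$ is $\PGL_3(K)$-conjugate to $\langle\rho\rangle$, where $\rho:=[\zeta_3 Z:Y:\zeta_3 X]$. The crucial observation is that the order-$6$ automorphism $\sigma=\operatorname{diag}(1,\zeta_6,\zeta_6^2)$ of the present type has a square $\sigma^2=\operatorname{diag}(1,\zeta_3,\zeta_3^{-1})$ that is \emph{not} a homology, whereas $\rho^2$ \emph{is}; and whether an element of $\PGL_3(K)$ is a homology (equivalently, whether a representing matrix has eigenvalue-multiplicity pattern $2+1$ up to scalar) is a $\PGL_3(K)$-conjugation invariant.

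Concretely, I would argue by contradiction. Suppose there is $\phi\in\PGL_3(K)$ with $\phi^{-1}\operatorname{Aut}(C)\phi=\operatorname{Hess}_{216}$. Then $\phi^{-1}\langle\sigma\rangle\phi$ is a cyclic subgroup of order $6$ of $\operatorname{Hess}_{216}$, so by the cited fact it is $\PGL_3(K)$-conjugate to $\langle\rho\rangle$; hence $\langle\sigma\rangle$ and $\langle\rho\rangle$ are conjugate in $\PGL_3(K)$. Now compare their order-$3$ elements. In $\langle\sigma\rangle$ the elements of order $3$ are $\sigma^{\pm2}=\operatorname{diag}(1,\zeta_3,\zeta_3^{-1})$, which have three pairwise distinct eigenvalues and are therefore not homologies; in $\langle\rho\rangle$ a one-line computation gives $\rho^2=\operatorname{diag}(1,\zeta_3,1)$ (and $\rho^4=\rho^{-2}$), a homology of period $3$. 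Since a $\PGL_3(K)$-conjugation sends order-$3$ elements to order-$3$ elements and preserves the property of being a homology, the conjugacy of $\langle\sigma\rangle$ and $\langle\rho\rangle$ would force $\sigma^2$ to be a homology, a contradiction. Therefore $\operatorname{Aut}(C)$ is never conjugate to $\operatorname{Hess}_{216}$.

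The only delicate point, and the step I would pin down most carefully, is the interpretation of the borrowed conjugacy statement: it must be read as asserting that the order-$6$ cyclic subgroups of $\operatorname{Hess}_{216}$ form a single $\PGL_3(K)$-conjugacy class. This is harmless, since conjugation inside $\operatorname{Hess}_{216}$ is a fortiori conjugation inside $\PGL_3(K)$, so either reading gives what I need; and it guarantees that \emph{every} order-$6$ element of $\operatorname{Hess}_{216}$ has a square that is a homology. Granting this, the whole claim reduces to the invariant check above, with no appeal to $\beta_{3,0},\beta_{2,2},\beta_{1,4}$ at all. It is worth noting that this is exactly the feature that distinguishes Type $6,(1,2)$ from Type $6,(1,3)$: there $\sigma^2$ \emph{is} a period-$3$ homology, which is precisely why $\operatorname{Hess}_{216}$ can (and does) occur, whereas here it cannot.
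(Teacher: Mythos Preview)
Your proposal is correct and is essentially identical to the paper's own proof: both observe that all order-$6$ cyclic subgroups of $\operatorname{Hess}_{216}$ are conjugate, pick a representative $\rho$ whose square is a homology of period $3$, and conclude since $\sigma^2=\operatorname{diag}(1,\zeta_3,\zeta_3^{-1})$ is a non-homology and this property is a $\PGL_3(K)$-conjugacy invariant. The only cosmetic difference is that the paper uses $\rho=[X:Z:\zeta_3 Y]$ directly (citing GroupNames for the single conjugacy class), while you borrow the representative $\rho=[\zeta_3 Z:Y:\zeta_3 X]$ from the Type $6,(1,3)$ discussion; either way $\rho^2$ is a homology and the argument goes through.
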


\begin{proof}
All copies of $\Z/6\Z$ inside $\operatorname{Hess}_{216}$ are conjugated, see \href{https://people.maths.bris.ac.uk/~matyd/GroupNames/193/ASL(2,3).html}{Hessian group of order $216$}. Fix the copy generated by to $\rho:=[X:Z:\zeta_3Y]$. In particular, $\rho^2$ is an homology of period $3$, hence $\langle\sigma\rangle$ can not be $\PGL_3(K)$-conjugate to $\langle\rho\rangle$ because $\sigma^2=\operatorname{diag}(1,\zeta_3,\zeta_3^{-1})$ is a non-homology.

This shows Claim \ref{claim1,612}.
\end{proof}

\begin{claim}\label{claim2,612}
If $C$ is a descendant of $\mathcal{F}_6$, then $\Aut(C)$ is $\operatorname{D}_{12},\,\Z/3\Z\rtimes\operatorname{S}_4,$ or $\Z/6\Z\times\operatorname{S}_3$.
\end{claim}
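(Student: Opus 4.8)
The plan is to use that $\langle\sigma,\tau\rangle\cong\operatorname{D}_{12}$ is always contained in $\Aut(C)$ --- one checks directly that $\tau=[Z:Y:X]$ preserves $F$ and that $\tau\sigma\tau=\sigma^{-1}$ --- and then to locate $\Aut(C)$ among the overgroups of $\operatorname{D}_{12}$ inside $\Aut(\mathcal{F}_6)\cong(\Z/6\Z)^2\rtimes\operatorname{S}_3$ (see Proposition~\ref{1217}). First I would reduce to $\Aut(C)\le\Aut(\mathcal{F}_6)$: since $C$ is a descendant of $\mathcal{F}_6$, there is $\phi\in\PGL_3(K)$ with $\phi^{-1}\Aut(C)\phi\le\Aut(\mathcal{F}_6)$, and I would normalize $\phi$ so that $\phi^{-1}\sigma\phi=\operatorname{diag}(1,\zeta_6,\zeta_6^2)$. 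Here a subtlety appears: unlike the Type $6,(1,3)$ case, the square $\sigma^2=\operatorname{diag}(1,\zeta_3,\zeta_3^{-1})$ is a \emph{non}-homology, so the period-$3$ conjugacy argument used there must be replaced by the observation that $\sigma$ and $\tau$ already lie in $\Aut(\mathcal{F}_6)$; after absorbing $\phi$ into the parameters one may assume $\sigma,\tau$ keep their canonical shapes and $\Aut(C)\le\Aut(\mathcal{F}_6)$.

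Next I would run a monomial-invariance analysis. Writing any $\eta\in\Aut(\mathcal{F}_6)$ as $\eta=\delta\pi$ with $\delta=\operatorname{diag}(1,\zeta_6^a,\zeta_6^b)$ and $\pi\in\operatorname{S}_3$ a coordinate permutation, the three cross-terms $X^3Z^3$, $X^2Y^2Z^2$ and $XY^4Z$ serve as markers. Since $XY^4Z$ is the unique exponent-$4$ monomial, $\beta_{1,4}\neq0$ forces $\pi$ to fix $Y$; since $X^3Z^3$ is the unique cross-cubic, $\beta_{3,0}\neq0$ forces $\pi$ to preserve $\{X,Z\}$; in either case $\pi\in\{1,(XZ)\}$. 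Imposing that $\delta$ fixes the surviving markers produces congruences on $(a,b)$ modulo $6$ (namely $b$ even from $X^3Z^3$, $a+b\equiv0\pmod 3$ from $X^2Y^2Z^2$, and $4a+b\equiv0\pmod 6$ from $XY^4Z$), and whenever $\beta_{2,2}\neq0$ together with $\beta_{3,0}\neq0$ or $\beta_{1,4}\neq0$ these cut the diagonal part down to $\langle\sigma\rangle$. Hence in every regime where $\beta_{1,4}\neq0$, or where both $\beta_{3,0}\neq0$ and $\beta_{2,2}\neq0$, I expect $\Aut(C)=\langle\sigma,\tau\rangle=\operatorname{D}_{12}=\operatorname{GAP}(12,4)$.

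It then remains to treat the degenerate regimes in which markers disappear. If $\beta_{3,0}=\beta_{1,4}=0$ and $\beta_{2,2}\neq0$, then $F=X^6+Y^6+Z^6+\beta_{2,2}X^2Y^2Z^2$ is fully $\operatorname{S}_3$-symmetric and invariant under the sign homologies $\operatorname{diag}(1,\pm1,\pm1)$; the diagonal solutions of $a+b\equiv0\pmod 3$ now form a group of order $12$, which together with $\operatorname{S}_3$ gives a group of order $72$ that I would identify --- using the inverting relation $\tau\sigma^2\tau=\sigma^{-2}$ --- as $\Z/3\Z\rtimes\operatorname{S}_4=\operatorname{GAP}(72,43)$. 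If instead $\beta_{2,2}=\beta_{1,4}=0$ and $\beta_{3,0}\neq0$, then $F=X^6+Y^6+Z^6+\beta_{3,0}X^3Z^3$ and the absence of $X^2Y^2Z^2$ enlarges the admissible diagonals to all $\operatorname{diag}(1,\zeta_6^a,\zeta_6^b)$ with $b$ even (order $18$); with $\tau$ this yields a group of order $36$, and after the relabelling carrying $X^3Z^3$ to $Y^3Z^3$ it is the stratum of Proposition~\ref{aut601}-(3), namely $\Z/6\Z\times\operatorname{S}_3=\operatorname{GAP}(36,12)$.

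Throughout I would invoke the standing hypothesis that $\sigma$ has maximal order $6$ to discard the remaining case $\beta_{3,0}=\beta_{2,2}=\beta_{1,4}=0$, that is $C=\mathcal{F}_6$: indeed $\Aut(\mathcal{F}_6)$ contains elements of order $12$, for instance $[Z:\zeta_6 Y:\zeta_6 X]$, so this curve cannot occur under our assumption; the same hypothesis also rules out the intermediate overgroups of $\operatorname{D}_{12}$ of orders $24$ and $108$, which carry order-$12$ elements. The main obstacle I anticipate is the bookkeeping in the two degenerate regimes: verifying that the enlarged diagonal-plus-permutation groups have \emph{exactly} orders $36$ and $72$ with the claimed abstract structure, and that no spurious automorphism of order $>6$ is introduced, requires a careful --- if routine --- tracking of the scalar produced on each marker monomial.
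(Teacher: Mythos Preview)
Your approach is essentially the paper's: reduce to $\Aut(C)\le\Aut(\mathcal{F}_6)$, then test which elements $\delta\pi$ of $\Aut(\mathcal{F}_6)$ preserve the cross-terms, splitting into the regimes $(\beta_{3,0},\beta_{1,4})\neq(0,0)$ (giving $\operatorname{D}_{12}$ or $\Z/6\Z\times\operatorname{S}_3$) versus $\beta_{3,0}=\beta_{1,4}=0$ (giving $\Z/3\Z\rtimes\operatorname{S}_4$). Your congruence bookkeeping on $(a,b)$ is a clean repackaging of the paper's check of which $\tau_i$ survive.

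One correction on the reduction step: the period-$3$ conjugacy argument does \emph{not} need to be ``replaced''. The paper applies it here too, just to a different conjugacy class: it observes that non-homologies of period $3$ in $\Aut(\mathcal{F}_6)$ which arise as squares of order-$6$ elements form a single $\Aut(\mathcal{F}_6)$-conjugacy class represented by $\sigma^2=\operatorname{diag}(1,\zeta_3,\zeta_3^{-1})$. Normalizing $\phi$ so that $\phi^{-1}\sigma^2\phi=\sigma^2$ forces $\phi$ into the centralizer of $\sigma^2$ in $\PGL_3(K)$, namely $\operatorname{diag}(1,a,b)$, $[bZ:X:aY]$ or $[aY:bZ:X]$; in each case $^{\phi}C$ has the same shape with rescaled parameters, which is what legitimizes assuming $\Aut(C)\le\Aut(\mathcal{F}_6)$ with $\sigma,\tau$ in canonical form. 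Your substitute observation (``$\sigma$ and $\tau$ already lie in $\Aut(\mathcal{F}_6)$'') is true but does not by itself carry this reduction --- a priori $\phi^{-1}\langle\sigma,\tau\rangle\phi$ could land on a non-conjugate copy of $\operatorname{D}_{12}$ inside $\Aut(\mathcal{F}_6)$, and it is the conjugacy-class statement that rules this out.
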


\begin{proof}
We can assume that $\phi^{-1}\Aut(C)\phi\leq\Aut(\mathcal{F}_6)$ for some $\phi\in\PGL_3(K)$ such that $\phi^{-1}\sigma^2\phi=\sigma^{2}$ since non-homologies of period $3$ that are obtained from an automorphism of order $6$ inside $\Aut(\mathcal{F}_6)$ form a single conjugacy class represented by $\sigma^2$. In particular, $\phi=\operatorname{diag}(1,a,b),\,$ $[bZ:X:aY]$ or $[aY:bZ:X]$ with $a^6=b^6=1$. In all situations, $C$ is $K$-isomorphic to
$$C':X^6+Y^6+Z^6+\beta_{3,0}'X^3Z^3+\beta_{2,2}'X^2Y^2Z^2+\beta_{1,4}'XY^4Z=0,$$
where $\left(\beta_{3,0}',\beta_{2,2}',\beta_{1,4}'\right)$ equals $\left(\beta_{3,0}b^3,\beta_{2,2}(ab)^2,\beta_{1,4}a^4b\right)$.
Next, we follow the notations in Section \ref{case613} to determine when $C'$ may admit a bigger automorphism subgroup of $\Aut(\mathcal{F}_6)$ than $\langle\sigma,\tau\rangle\simeq\operatorname{GAP}(12,4)=\operatorname{D}_{12}$.

- First, $\tau_{5,6}$ is an automorphism for $C'$ only if $\beta_{3,0}'=\beta_{1,4}'=0$. In such a case $\Aut(C)$ is the semidirect product of $\Z/3\Z$ and $\operatorname{S}_4$ provided that $\beta_{2,2}'\neq0$. Indeed, it is generated by $\sigma,\,\tau,\,\tau':=\operatorname{diag}(1,1,-1)$ and $\eta:=[Y:Z:X]$, where   $\sigma^6=\tau^2=\tau'^2=\eta^3=1,\,\tau\sigma\tau=\sigma^{-1},\,\sigma\tau'=\tau'\sigma,\,\eta\sigma\eta^{-1}=\sigma\tau,\,\tau\tau'\tau=\sigma^3\tau',\,\tau\eta\tau=\eta^{-1},\,\eta\tau'\eta^{-1}=\sigma^3$. Therefore, $\Aut(C)\simeq\operatorname{GAP}(72,43)=\Z/3\Z\rtimes\operatorname{S}_4$.

- Secondly, if $\beta_{3,0}'\neq0$ or $\beta_{1,4}'\neq0$, then $\tau_{3,4,5,6}$ are never an automorphism for $C'$. More precisely, if $\beta_{3,0}'\neq0$ and $\beta_{1,4}'=\beta_{2,2}'=0$, then $C$ becomes $K$-isomorphic, via a permutation $X\leftrightarrow Y$, to a curve in the family of Proposition \ref{aut601}-(3), in particular, $\Aut(C)\simeq\Z/6\Z\times\operatorname{S}_3$. Otherwise, $\beta_{1,4}'\neq0$ or $\beta_{2,2}'\neq 0$ and $\Aut(C)=\langle\sigma,\tau\rangle\simeq\operatorname{D}_{12}$.

This shows Claim \ref{claim2,612}.
\end{proof}

\begin{claim}\label{claim3,612}
  If $\Aut(C)$ fixes a line $\mathcal{L}$ and a point $P$ not lying on $\mathcal{L}$, then $\Aut(C)$ is $\operatorname{D}_{12}$ or $\Z/3\Z\rtimes\operatorname{S}_4.$
\end{claim}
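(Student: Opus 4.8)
The plan is to run the same machine as in Claim \ref{claim2,613}, namely the intransitive exact sequence of Theorem \ref{teoHarui}-(2), but now with an extra dihedral symmetry coming from $\tau$. First I would pin down the fixed data. Since $\sigma=\operatorname{diag}(1,\zeta_6,\zeta_6^2)$ has three distinct diagonal entries, the only points and lines it fixes are the three coordinate vertices and coordinate lines; as $\tau=[Z:Y:X]$ must also fix $P$ and $\mathcal L$ and among these it fixes only $(0:1:0)$ and $\{Y=0\}$, I would conclude $P=(0:1:0)$ and $\mathcal L:Y=0$ (and note $(0:1:0)\notin C$). Hence every automorphism has Shape II in the sense of Section \ref{case613}, and $\Aut(C)$ sits in $1\to N\to\Aut(C)\to G'\to1$ with $\Lambda$ the projection onto the action on $(X,Z)$. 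Here $\sigma^3=\operatorname{diag}(1,-1,1)\in N$, so $|N|\geq2$, while $\Lambda(\sigma)$ (order $3$) and $\Lambda(\tau)$ (the involution $X\leftrightarrow Z$) generate a copy of $\operatorname{S}_3$ inside $G'$.

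The key device is the binary decomposition $F=\sum_{k}Y^{6-k}G_k(X,Z)$ with $G_k$ of degree $k$; concretely $G_0=1$, $G_2=\beta_{1,4}XZ$, $G_4=\beta_{2,2}X^2Z^2$ and $G_6=X^6+Z^6+\beta_{3,0}X^3Z^3$. Invariance of $F$ forces each nonzero $G_k$ to be a relative $G'$-invariant binary form of degree $k\leq6$. Combining this with the isomorphism types allowed by Theorem \ref{teoHarui}-(2): $G'$ cannot be cyclic (it contains the nonabelian $\operatorname{S}_3$) nor $\operatorname{A}_4$ (which has no subgroup of order $6$); an icosahedral $\operatorname{A}_5\subset\operatorname{PGL}_2(K)$ has no nonzero invariant binary form of degree $\leq6$, so all $G_k$ would vanish and $C$ would degenerate, excluding $\operatorname{A}_5$; and a rotation of order $m>3$ would lift to an element of order $m$ or $2m$, where $2m>6$ violates maximality of $\sigma$ and, for $\beta_{3,0}\neq0$, $G_6$ is rotation-invariant only when $m\mid3$. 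This should leave exactly $G'\cong\operatorname{S}_3$ or $G'\cong\operatorname{S}_4$.

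In the case $G'\cong\operatorname{S}_3$ one has $N=\langle\sigma^3\rangle\cong\Z/2\Z$, so $|\Aut(C)|=12$ and $\Aut(C)=\langle\sigma,\tau\rangle\cong\operatorname{D}_{12}$, using the relation $\tau\sigma\tau=\sigma^{-1}$ already recorded for Type $6,(1,2)$. In the case $G'\cong\operatorname{S}_4$ the semi-invariance kills $G_2$ and $G_4$, since $XZ$ and $(XZ)^2$ are not octahedral semi-invariants; thus $\beta_{1,4}=\beta_{2,2}=0$ and $\beta_{3,0}$ is pinned to the value making $G_6$ the octahedral degree-$6$ semi-invariant whose $3$-fold axis passes through $(1:0)$ and $(0:1)$. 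An explicit Shape II element $\eta$ with $\Lambda(\eta)$ of order $4$ then completes $\operatorname{S}_3$ to $\operatorname{S}_4$ and lies in $\Aut(C)$. After the coordinate change realizing this octahedral symmetry, $C$ becomes $K$-isomorphic to the symmetric sextic $X^6+Y^6+Z^6+\beta_{2,2}X^2Y^2Z^2$, whose full automorphism group is $\Z/3\Z\rtimes\operatorname{S}_4$ by Claim \ref{claim2,612}. Hence the only possibilities are $\operatorname{D}_{12}$ and $\Z/3\Z\rtimes\operatorname{S}_4$.

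The main obstacle is the $G'\cong\operatorname{S}_4$ branch. One must verify that the only degree-$\leq6$ octahedral semi-invariants compatible with a $3$-fold axis at $(1:0),(0:1)$ and the swap are multiples of $G_6$, thereby forcing $\beta_{1,4}=\beta_{2,2}=0$ and the prescribed $\beta_{3,0}$; one must also produce the order-$4$ lift $\eta$ and confirm $\eta^4=1$ (not $\eta^4=\sigma^3$, which would give order $8>6$). Finally, there is a bookkeeping subtlety to reconcile: the resulting symmetric curve has full group $\Z/3\Z\rtimes\operatorname{S}_4$ of order $72$, which no longer fixes the point $(0:1:0)$, so the point-fixing hypothesis only detects a proper subgroup and one genuinely has to pass to the full group via Claim \ref{claim2,612}. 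Carefully excluding the borderline rotation order $m=6$ (which would spuriously suggest an order-$24$ group) is the other delicate point.
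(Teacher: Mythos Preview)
Your setup—identifying $P=(0:1:0)$, $\mathcal L:Y=0$, and running the short exact sequence—agrees with the paper. The difference is in how you prune the list of possible $G'$, and this is where a real gap appears. The paper eliminates $G'\cong\operatorname{S}_3$ outright: since $|N|\geq 2$, Theorem~\ref{teoHarui} forces a dihedral image $G'\cong\operatorname{D}_{2m}$ to satisfy $m\mid(d-2)=4$, while $\operatorname{S}_3\subset G'$ would require $3\mid m$; no such $m$ exists, so only $G'\cong\operatorname{S}_4$ remains. You instead keep $G'\cong\operatorname{S}_3$ on the table and then assert $N=\langle\sigma^3\rangle\cong\Z/2\Z$, concluding $|\Aut(C)|=12$. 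That equality is never justified, and it is false when $\beta_{1,4}=\beta_{2,2}=0$: then $\operatorname{diag}(1,\zeta_6,1)\in\Aut(C)$, so $|N|=6$ and $\Aut(C)\cong\Z/6\Z\times\operatorname{S}_3$, which is neither $\operatorname{D}_{12}$ nor $\Z/3\Z\rtimes\operatorname{S}_4$. Without invoking Harui's $m\mid(d-2)$ constraint (or otherwise pinning $|N|$), your $\operatorname{S}_3$ branch does not close.

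Your $G'\cong\operatorname{S}_4$ branch also diverges from the paper and does not reach the stated target. The paper produces an extra involution $\Lambda(\tau')=[X:-Z]$ via the centralizer of $\Lambda(\tau)$ in $\operatorname{S}_4$ and reads off $\beta_{3,0}=\beta_{1,4}=0$, landing directly on $X^6+Y^6+Z^6+\beta_{2,2}X^2Y^2Z^2$ with automorphism group $\Z/3\Z\rtimes\operatorname{S}_4$. Your octahedral semi-invariant argument instead forces $\beta_{1,4}=\beta_{2,2}=0$ with $\beta_{3,0}$ pinned—a \emph{different} specialization giving $C:X^6+Y^6+Z^6+\beta_{3,0}X^3Z^3$. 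But by Proposition~\ref{aut601} (after relabelling $X\leftrightarrow Y$) this curve has $\Aut(C)\cong\Z/6\Z\times\operatorname{S}_3$ of order $36$ for every $\beta_{3,0}\neq0$; it is therefore not $K$-isomorphic to the order-$72$ symmetric sextic, and your concluding ``coordinate change realizing this octahedral symmetry'' is not substantiated.
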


\begin{proof}
Because $\sigma,\tau\in\Aut(C)$, then $\mathcal{L}:Y=0$ and $P=(0:1:0)$. So, all automorphisms of $C$ are intransitive of the shape:
  $$
\left(
  \begin{array}{ccc}
    \ast & 0 & \ast \\
    0 & 1 & 0 \\
    \ast & 0 & \ast \\
  \end{array}
\right).
$$
Also, we can think about $\Aut(C)$ in a short exact sequence
$$1\rightarrow N\rightarrow \Aut(C)\rightarrow\Lambda(\Aut(C))\rightarrow 1,$$ where $N$ is a cyclic group of order dividing $6$, and $\Lambda(\Aut(C))$ is a cyclic group $\Z/m\Z$ of order $m\leq d-1$, a Dihedral group $\operatorname{D}_{2m}$ of order $2m$ with $|N|=1$ or
$m=1,2$ or $4$, one of the alternating groups $\operatorname{A}_4$, $\operatorname{A}_5$, or the symmetry group $\operatorname{S}_4$.

We have $\langle\sigma^3\rangle\leq N$ of order $2$, and $\Lambda(\Aut(C))$ contains an $\operatorname{S}_3$ generated by $\Lambda(\sigma)=[\zeta_3X,\zeta_3^{-1}Z]$ of order $3$ and $\Lambda(\tau)=[Z:X]$ of order $2$. Then, $\Lambda(\Aut(C))\neq\Z/m\Z$ as $\operatorname{S}_3$ is not cyclic, $\Lambda(\Aut(C))\neq\operatorname{D}_{2m}$ as $3\nmid 2m$ if $m|4$, $\Lambda(\Aut(C))\neq A_4$ as $\operatorname{A}_4$ doesn't have an $\operatorname{S}_3$ as a subgroup, $\Lambda(\Aut(C))\neq\operatorname{A}_5$ as $\Aut(C)$ would contain an automorphism of order $10>6$ otherwise. On the other hand, if $\Lambda(\Aut(C))=\operatorname{S}_4$, then $\Lambda(\Aut(C))$ should contain an element $\Lambda(\tau')\notin\langle\Lambda(\sigma),\Lambda(\tau)\rangle$ of order $2$ such that $\Lambda(\tau)\Lambda(\tau')=\Lambda(\tau')\Lambda(\tau)$ since $\langle\Lambda(\tau)\rangle=\Z/2\Z$ in $\operatorname{S}_4$ has centralizer $(\Z/2\Z)^2$. This implies that $\Lambda(\tau')=[X:-Z]$, equivalently, $\tau'=\operatorname{diag}(\lambda,1,-\lambda)$. Such an automorphism exists if and only if $\beta_{3,0}=\beta_{1,4}=0$. In such a case, we reduce to the previous scenario when $\Aut(C)$ was a semidirect product of $\Z/3\Z$ and $\operatorname{S}_4$.

This shows Claim \ref{claim3,612}.
\end{proof}

By the previous discussion, we conclude:
\begin{prop}\label{3.4}
Let $C$ be a smooth plane sextic curves $C$ of Type $6, (1,2)$ as above. Then, $\Aut(C)$ is classified as follows:
\begin{enumerate}[1.]
\item If $\beta_{3,0}=\beta_{1,4}=0$, then $C$ is given by $X^6+Y^6+Z^6+\beta_{2,2}X^2Y^2Z^2=0$ for some $\beta_{2,2}\neq0$, a descendant of the Fermat curve. In this situation, $\Aut(C)=\langle\sigma,\tau,\tau',\eta\rangle\simeq\Z/3\Z\rtimes\operatorname{S}_4$ with $\tau=[Z:Y:X],\,\tau'=\operatorname{diag}(1,1,-1)$ and $\eta=[Y:Z:X]$.
\item If $\beta_{3,0}\neq0, \beta_{1,4}=\beta_{2,2}=0$, then $\Aut(C)\simeq\Z/6\Z\times\operatorname{S}_3$ as in Proposition \ref{aut601}-(3).
  \item Otherwise, $\Aut(C)=\langle\sigma,\tau\rangle\simeq\operatorname{D}_{12}$.

\end{enumerate}
\end{prop}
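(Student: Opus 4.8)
The plan is to prove Proposition \ref{3.4} by following the trichotomy already established in the three preceding claims, which together exhaust all possibilities for $\Aut(C)$ when $C$ is of Type $6,(1,2)$. The preliminary reductions at the start of the subsection eliminate the cyclic case (since $\tau=[Z:Y:X]$ is always present), the Klein-curve descendant case (as $6\nmid 63$), the primitive subgroups $\operatorname{PSL}(2,7),\operatorname{A}_5,\operatorname{A}_6,\operatorname{Hess}_{36},\operatorname{Hess}_{72}$ (none has an element of order $6$ of the required shape), and the $\operatorname{Hess}_{216}$ case (Claim \ref{claim1,612}). This leaves exactly two scenarios: $C$ is a descendant of the Fermat curve $\mathcal{F}_6$ (Claim \ref{claim2,612}), or $\Aut(C)$ fixes a line and a point off it (Claim \ref{claim3,612}). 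So the proposition is essentially an assembly of these cases, organized by the vanishing pattern of the parameters $\beta_{3,0},\beta_{1,4},\beta_{2,2}$.

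First I would invoke Claim \ref{claim2,612} and Claim \ref{claim3,612} to conclude that in every surviving case $\Aut(C)\in\{\operatorname{D}_{12},\,\Z/3\Z\rtimes\operatorname{S}_4,\,\Z/6\Z\times\operatorname{S}_3\}$. Then I would read off the precise parameter conditions distinguishing the three outcomes directly from the proof of Claim \ref{claim2,612}. There, the analysis of when the extra Fermat automorphisms $\tau_3,\dots,\tau_6$ preserve the normalized equation $C'$ shows: (i) the permutations $\tau_{5,6}$ (the $3$-cycles $\eta=[Y:Z:X]$ and its inverse) act as automorphisms precisely when $\beta_{3,0}'=\beta_{1,4}'=0$, yielding the curve $X^6+Y^6+Z^6+\beta_{2,2}X^2Y^2Z^2$ with the full group $\Z/3\Z\rtimes\operatorname{S}_4$ provided $\beta_{2,2}\neq0$; (ii) if instead $\beta_{3,0}\neq0$ but $\beta_{1,4}=\beta_{2,2}=0$, the curve becomes $K$-isomorphic via the swap $X\leftrightarrow Y$ to a member of the family in Proposition \ref{aut601}-(3), giving $\Z/6\Z\times\operatorname{S}_3$; and (iii) in all remaining cases no enlargement occurs and $\Aut(C)=\langle\sigma,\tau\rangle\simeq\operatorname{D}_{12}$.

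The one point requiring a little care is the consistency between the invariant formulation in terms of $(\beta_{3,0},\beta_{1,4},\beta_{2,2})$ stated in the proposition and the normalized coefficients $(\beta_{3,0}',\beta_{1,4}',\beta_{2,2}')=(\beta_{3,0}b^3,\beta_{2,2}(ab)^2,\beta_{1,4}a^4b)$ appearing in Claim \ref{claim2,612}. Since $a^6=b^6=1$ forces $a,b\in K^*$ to be sixth roots of unity, the conditions $\beta_{3,0}'=0$, $\beta_{1,4}'=0$, $\beta_{2,2}'=0$ are equivalent to $\beta_{3,0}=0$, $\beta_{1,4}=0$, $\beta_{2,2}=0$ respectively, so the vanishing loci are preserved and the case distinction is genuinely a statement about the original parameters. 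I would also record that Claim \ref{claim3,612} produces no new groups: whenever the line-fixing scenario forces the enlargement to $\operatorname{S}_4$ at the $\operatorname{PGL}_2$ level, it lands back in case (i) with $\beta_{3,0}=\beta_{1,4}=0$, so the two claims are mutually consistent.

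I do not anticipate a serious obstacle, as the heavy lifting is done by the three claims; the main task is bookkeeping to verify that the three mutually exclusive and exhaustive parameter conditions listed in the proposition exactly match the branch points in those proofs. The subtlest step is confirming that case (i) genuinely requires $\beta_{2,2}\neq0$ for the group to be $\Z/3\Z\rtimes\operatorname{S}_4$ rather than something larger or singular---but this is guaranteed because $\beta_{3,0}=\beta_{1,4}=\beta_{2,2}=0$ would make $C$ the Fermat curve $\mathcal{F}_6$ itself (with the much larger group $(\Z/6\Z)^2\rtimes\operatorname{S}_3$), a case already separated out under Type $24,(1,19)$, hence excluded here by the maximality assumption on the order of $\sigma$.
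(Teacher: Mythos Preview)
Your proposal is correct and follows essentially the same approach as the paper: the paper's proof of this proposition is literally just ``By the previous discussion, we conclude,'' assembling Claims~\ref{claim1,612}, \ref{claim2,612}, and \ref{claim3,612}, and you have simply made that assembly explicit, including the minor bookkeeping that the vanishing of the primed and unprimed coefficients coincide because $a,b$ are sixth roots of unity.
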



\subsection{Type $5,(1,4)$}\label{case514}
In this situation, $C$ is defined by an equation of the form:
$$C:X^6+XZ^5+XY^5+\beta_{4,1}X^4YZ+\beta_{2,2}X^2Y^{2}Z^{2}+\beta_{0,3}Y^3Z^3=0,$$
where $\sigma:=\operatorname{diag}(1,\zeta_5,\zeta_5^{-1})$ is an automorphism of maximal order $5$.

Clearly, $\langle\sigma,\tau\rangle\simeq\operatorname{D}_{10}$ with $\tau:=[X:Z:Y]$ is always a subgroup of automorphisms for $C$, in particular, $\Aut(C)$ is never cyclic. Also $C$ is neither a descendant of the Fermat curve $\mathcal{F}_6)$ nor the Klein curve $\mathcal{K}_6$ as $10\nmid|\Aut(\mathcal{F}_6)|(=216)$ and $10\nmid|\Aut(\mathcal{K}_6)|(=63)$. For the same reason, $\Aut(C)$ is not conjugate to $\operatorname{Hess}_{*}$, for $*\in\{36,72,216\}$, or to $\operatorname{PSL}(2,7)$. That is, $\Aut(C)$ fixes a line and a point off this line or it is $\PGL_3(K)$-conjugate to $\operatorname{A}_5$ or $\operatorname{A}_6$.

\begin{claim}\label{claim1,514}
If $\Aut(C)$ fixes a line $\mathcal{L}$ and a point $P$ off this line, then $\Aut(C)=\langle\sigma,\tau\rangle\simeq\operatorname{D}_{10}$.
\end{claim}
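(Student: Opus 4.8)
The plan is to locate $\Aut(C)$ inside the extension of Theorem \ref{teoHarui}-(2) and to identify its two ends $N$ and $G':=\Lambda(\Aut(C))$. First I would pin down the invariant flag. Since $\sigma=\operatorname{diag}(1,\zeta_5,\zeta_5^{-1})$ is diagonal with three distinct eigenvalues, its only invariant lines and points are the coordinate ones; among these, $\tau=[X:Z:Y]$ fixes exactly $\mathcal{L}:X=0$ and $P=(1:0:0)$ while swapping the other two of each. Hence the fixed line and point of the hypothesis must be this $\mathcal{L}$ and $P$, and $F(1,0,0)=1\neq0$ confirms $P\notin C$, so we are genuinely in case (2). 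Every automorphism is then intransitive of shape $\begin{pmatrix}1&0&0\\0&\ast&\ast\\0&\ast&\ast\end{pmatrix}$, giving $1\to N\to\Aut(C)\to G'\to1$.

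The decisive first computation is that $N=1$. By Remark \ref{extensionrem}, $N$ consists of the diagonal automorphisms $\operatorname{diag}(\zeta,1,1)$. Acting by $X\mapsto\zeta X$ scales the always-present monomials $X^6$ and $XY^5$ (equivalently $XZ^5$) by $\zeta^6$ and $\zeta$, forcing $\zeta^5=1$. Since $C$ is smooth it cannot degenerate to the reducible $X(X^5+Y^5+Z^5)$, so at least one of $\beta_{4,1},\beta_{2,2},\beta_{0,3}$ is nonzero; each of these terms, together with $\zeta^5=1$, forces $\zeta=1$. Thus $N=1$, and consequently $\Lambda$ is injective on $\Aut(C)$.

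Now $G'=\Lambda(\Aut(C))$ already contains $\Lambda(\langle\sigma,\tau\rangle)\cong\operatorname{D}_{10}$, with $\Lambda(\sigma)$ of order $5$ and $\Lambda(\tau)=[Z:Y]$ of order $2$. Running down the list in Theorem \ref{teoHarui}-(2): $G'$ is not cyclic because $\operatorname{D}_{10}$ is nonabelian; it is neither $\operatorname{A}_4$ nor $\operatorname{S}_4$, since those have no element of order $5$ whereas $\Lambda(\sigma)$ has order $5$; and if $G'$ were a larger dihedral group $\operatorname{D}_{2m}$ (necessarily with $5\mid m$), then its rotation would lift, because $N=1$, to an automorphism of order $m>5$, contradicting the maximality of $\operatorname{ord}(\sigma)=5$. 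Hence $m=5$ and $G'=\operatorname{D}_{10}$, unless $G'=\operatorname{A}_5$.

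The one genuinely substantial case is $G'=\operatorname{A}_5$, and this is where I expect the main obstacle. Here I would use that $\operatorname{PBD}(2,1)\cong\operatorname{GL}_2(K)$: each block matrix has a unique representative with $(1,1)$-entry equal to $1$, and under this isomorphism $\Lambda$ becomes the projection $\operatorname{GL}_2(K)\to\operatorname{PGL}_2(K)$ whose kernel is the scalars $K^*$. Because $N=\Aut(C)\cap K^*=1$, the group $\Aut(C)\cong G'$ embeds \emph{faithfully} in $\operatorname{GL}_2(K)$; but $\operatorname{A}_5$ admits no faithful two-dimensional linear representation over $K$ (its smallest faithful linear representation has dimension $3$, and it is the binary icosahedral extension $\operatorname{SL}_2(\mathbb{F}_5)$, not $\operatorname{A}_5$ itself, that sits in $\operatorname{GL}_2$). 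Hence $G'\neq\operatorname{A}_5$. As a concrete backup one may instead note that the Klein four-group $V_4\ni\Lambda(\tau)$ inside $\operatorname{A}_5$ would lift to an involution $\operatorname{diag}(1,c,-c)$ or $[X:cZ:-cY]$, neither of which preserves $F$ since they scale the equal-coefficient terms $XY^5$ and $XZ^5$ by opposite signs. Combining all cases gives $G'=\operatorname{D}_{10}$, and since $\Lambda$ is injective on $\Aut(C)$ with $\Lambda(\langle\sigma,\tau\rangle)=G'$, we conclude $\Aut(C)=\langle\sigma,\tau\rangle\cong\operatorname{D}_{10}$.
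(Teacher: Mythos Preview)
Your proof is correct and follows essentially the same route as the paper: identify the fixed flag as $\mathcal{L}:X=0$, $P=(1:0:0)$, place $\Aut(C)$ in the short exact sequence of Theorem~\ref{teoHarui}-(2) with $N=1$, and then eliminate the possible values of $G'=\Lambda(\Aut(C))$ from the list. You are more explicit than the paper in justifying $N=1$ (the paper simply asserts it), and your exclusion of $\Z/m\Z$, $\operatorname{D}_{2m}$ with $m>5$, $\operatorname{A}_4$, and $\operatorname{S}_4$ is the same as the paper's.

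The one genuine difference is how you rule out $G'=\operatorname{A}_5$. The paper argues, as in your backup, that the centralizer of $\Lambda(\tau)$ in $\operatorname{A}_5$ is a Klein four-group, so some extra involution $\tau'=\operatorname{diag}(1,\lambda,-\lambda)$ would have to act on $C$, which is blocked by the monomials $XY^5$ and $XZ^5$. Your primary argument is more conceptual: since $N=1$, the inclusion $\Aut(C)\hookrightarrow\operatorname{PBD}(2,1)\cong\operatorname{GL}_2(K)$ would give a faithful $2$-dimensional linear representation of $\operatorname{A}_5$, which does not exist. This is a clean shortcut that avoids pinning down the specific shape of the extra involution, at the cost of invoking a fact from representation theory; the paper's approach stays entirely within elementary coordinate computations.
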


\begin{proof}
Since As $\sigma,\,\tau\in\Aut(C)$, then $\mathcal{L}:X=0$ and $P=(1:0:0)$. Thus all automorphisms of $C$ are intransitive of the shape:
  $$
\left(
  \begin{array}{ccc}
    1 & 0 & 0 \\
    0 & \ast & \ast \\
    0 & \ast & \ast \\
  \end{array}
\right).
$$
Also, $\Aut(C)$ would satisfy a short exact sequence
$$1\rightarrow N=1\rightarrow \Aut(C)\rightarrow\Lambda(\Aut(C))\rightarrow 1,$$ where $\Lambda(\Aut(C))$ is a cyclic group $\Z/m\Z$ of order $m\leq d-1$, a Dihedral group $\operatorname{D}_{2m}$ of order $2m$, one of the alternating groups $\operatorname{A}_4$, $\operatorname{A}_5$, or the symmetry group $\operatorname{S}_4$.

We note that $\Lambda(\Aut(C))$ contains $\langle\Lambda(\sigma),\Lambda(\tau)\rangle=\operatorname{D}_{10}$. Hence, $\Lambda(\Aut(C))\neq\Z/m\Z,\,\operatorname{A}_4,$ or $\operatorname{S}_4$. Furthermore, if $\Lambda(\Aut(C))=\operatorname{A}_{5}$, then there must be $\Lambda(\tau')\notin\langle\Lambda(\sigma),\Lambda(\tau)\rangle$ of order two that commutes with $\Lambda(\tau)$ as any copy of $\Z/2\Z$ inside $\operatorname{A}_5$ has centralizer $(\Z/2\Z)^2.$ This leads to $\Lambda(\tau')=[Y:-Z]$, equivalently, $\tau'=\operatorname{diag}(1,\lambda,-\lambda)$ for some $\lambda\in K^*$. The presence of the monomials $XZ^5$ and $XY^5$ prevents the existence of such an automorphism, so $\Lambda(\Aut(C))\neq\operatorname{A}_5$.

Lastly, if $\Lambda(\Aut(C))=\operatorname{D}_{2m}$, then $m=5$ by the maximality of the order of $\sigma$.

This proves Claim \ref{claim1,514}.
\end{proof}

Now, suppose that $\Aut(C)$ is conjugate to $\operatorname{A}_5$ or $\operatorname{A}_6$. In either way, $\Aut(C)$ has another involution $\tau'$ that commutes with $\tau$ as any $\Z/2\Z$ inside $\operatorname{A}_5$ or $\operatorname{A}_6$ is contained in a $(\Z/2\Z)^2$. By \cite[Lemma 2.16]{DoiIdei}, we obtain that this involution has the shape:
$$\tau'_{\gamma}=\left(
                                                                                                  \begin{array}{ccc}
                                                                                                    1 & 2/\gamma & 2/\gamma \\
                                                                                                    \gamma & (-1+\sqrt{5})/2 & (-1-\sqrt{5})/2 \\
                                                                                                    \gamma & (-1-\sqrt{5})/2 & (-1+\sqrt{5})/2 \\
                                                                                                  \end{array}
                                                                                                \right),$$ where $\beta_{4,1}=\dfrac{12-\gamma^5}{\gamma^2},\,\beta_{2,2}=\dfrac{48+\gamma^5}{\gamma^4},$ and $\beta_{0,3}=\dfrac{64-2\gamma^5}{\gamma^6}$ for some $\gamma\in K^*.$
That is, the defining equation reduces to
$$
C_{\gamma}:X^6+X(Y^5+Z^5)+\dfrac{12-\gamma^5}{\gamma^2}X^4YZ+\dfrac{48+\gamma^5}{\gamma^4}X^2Y^2Z^2+\dfrac{64-2\gamma^5}{\gamma^6}Y^3Z^3=0,
$$
Next, suppose that $\Aut(C)$ is equal to $\operatorname{A}_6$. By \cite[Thoerem 2.1]{DoiIdei}, $C$ must be $K$-equivalent to the Wiman sextic curve $\mathcal{W}_6$:
$$
\mathcal{W}_6:27X^6+9XZ^5+9XY^5-135X^4YZ-45X^2Y^{2}Z^{2}+10Y^3Z^3=0.
$$
All the copies of $\operatorname{D}_{10}$ inside $\operatorname{A}_6$ are conjugated. So, there is no loss of generality to assume that $^{\phi}C_{\gamma}$ is the Wiman sextic curve via $\phi\in\PGL_3(K)$ such that $\phi^{-1}\langle\sigma,\,\tau\rangle\phi=\langle \sigma,\,\tau\rangle$ as $\langle\sigma,\,\tau\rangle\leq\operatorname{Aut}(\mathcal{W}_6).$ That is, $\phi$ has the shape $\operatorname{diag}(\lambda,\zeta_5^j,1)$ or $[\lambda X:\zeta_5^j Z:Y]$ for some $j=0,1,2,3,4$. In either
way, the transformed equation of $C$ became
 \begin{eqnarray*}
    ^{\phi}C:\lambda^6X^6&+&\lambda XZ^5+\lambda XY^5+\dfrac{12-\gamma^5}{\gamma^2}\lambda^4 \zeta_5^j X^4YZ+\dfrac{48+\gamma^5}{\gamma^4}\lambda^2 \zeta_5^{2j}X^2Y^{2}Z^{2} \\
    &+& \dfrac{64-2\gamma^5}{\gamma^6}\zeta_5^{3j}Y^3Z^3=0.
 \end{eqnarray*}
In particular, we should have
$$
\lambda^6=27\nu,\,\lambda=9\nu,\,\dfrac{12-\gamma^5}{\gamma^2}\lambda^4\zeta_5^j=-135\nu,\,\dfrac{64-2\gamma^5}{\gamma^6}\zeta_5^{3j}=10\nu,\,\dfrac{48+\gamma^5}{\gamma^4}\lambda^2\zeta_5^{2j}=-45\nu,\\
$$
for some $\nu\in K^*.$ Equivalently, 
$$\dfrac{12-\gamma^5}{\gamma^2}=\dfrac{-15}{(\sqrt[5]{3})^3}\zeta_5^{-(3i+j)},\,\dfrac{48+\gamma^5}{\gamma^4}=\dfrac{-5}{\sqrt[5]{3}}\zeta_5^{-(i+2j)},\,\dfrac{64-2\gamma^5}{\gamma^6}=\dfrac{10\sqrt[5]{3}}{9}\zeta_5^{i-3j}$$ for $i,j=0,1,2,3,4$. Eventually, the automorphism group is $R^{-1}\,\langle T_1,\,T_2,\,T_3,\,T_4\rangle\,R$ as illustrated in Appendix  \ref{appB}.

Lastly, we remark that $$\tau':=\phi^{-1}\,\tau'_{\gamma}\,\phi=\left(
                                                                                                  \begin{array}{ccc}
                                                                                                    1 & 1 & 1 \\
                                                                                                    2 & (-1+\sqrt{5})/2 & (-1-\sqrt{5})/2 \\
                                                                                                    2 & (-1-\sqrt{5})/2 & (-1+\sqrt{5})/2 \\
                                                                                                  \end{array}
                                                                                                \right),$$
with $\phi=\operatorname{diag}(2/\gamma,1,1)$. Moreover, that $\phi$ is in the normalizer of $\langle\sigma,\tau\rangle$ in $\PGL_3(K)$. In this case, $C$ is defined by an equation of the form:
$$
32X^6+\gamma^5X(Y^5+Z^5)+8(12-\gamma^5)X^4YZ+2(48+\gamma^5)X^2Y^2Z^2+(32-\gamma^5)Y^3Z^3=0,
$$
with $\langle\sigma,\tau,\tau'\rangle\leq\Aut(C)$. Because $\operatorname{A}_5$ is the only subgroup of $\operatorname{A}_6$ that contains $\operatorname{D}_{10}$ properly, then $\langle\sigma,\tau,\tau'\rangle\simeq\operatorname{A}_5$.

From the above argument, we deduce:
\begin{prop}\label{aut514}
Let $C$ be a smooth plane sextic curves $C$ of Type $5, (1,4)$ as above. Then, $\Aut(C)$ is classified as follows:
\begin{enumerate}[1.]
\item If $\beta_{4,1}=\dfrac{-15}{(\sqrt[5]{3})^3}\zeta_5^{-(3i+j)},\,\beta_{2,2}=\dfrac{-5}{\sqrt[5]{3}}\zeta_5^{-(i+2j)},\,\beta_{0,3}=\dfrac{10\sqrt[5]{3}}{9}\zeta_5^{i-3j}$ for some $i,j\in\{0,1,2,3,4\}$, then $C$ is $K$-isomorphic to the Wiman sextic curve $$27X^6+9XZ^5+9XY^5-135X^4YZ-45X^2Y^{2}Z^{2}+10Y^3Z^3=0.$$ In this situation, $\Aut(C)$ is $\PGL_3(K)$ conjugate to $\operatorname{A}_6=R^{-1}\langle T_1,\,T_2,\,T_3,\,T_4\rangle R$.
\item If $\beta_{4,1}=\dfrac{12-\gamma^5}{\gamma^2},\,\beta_{2,2}=\dfrac{48+\gamma^5}{\gamma^4},\,\beta_{0,3}=\dfrac{64-2\gamma^5}{\gamma^6}$ for some $\gamma\in K^*$ such that $\beta_{2,1}\neq\dfrac{-15}{(\sqrt[5]{3})^3}\zeta_5^{-(3i+j)},\,\beta_{4,2}\neq\dfrac{-5}{\sqrt[5]{3}}\zeta_5^{-(i+2j)}$ or $\beta_{0,3}\neq\dfrac{10\sqrt[5]{3}}{9}\zeta_5^{i-3j}$ for some $i,j\in\{0,1,2,3,4\}$, then $C$ is $K$-isomorphic to
    $$
32X^6+\gamma^5X(Y^5+Z^5)+8(12-\gamma^5)X^4YZ+2(48+\gamma^5)X^2Y^2Z^2+(32-\gamma^5)Y^3Z^3=0,
$$
where $\Aut(C)=\langle\sigma,\tau,\tau'\rangle\simeq\operatorname{A}_5$.
\item Otherwise, $\Aut(C)=\langle\sigma,\tau\rangle\simeq\operatorname{D}_{10}$.
\end{enumerate}
\end{prop}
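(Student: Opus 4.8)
The plan is to run Harui's classification (Theorem \ref{teoHarui}) for $\Aut(C)$ and exploit the built-in dihedral symmetry to discard most branches. First I would note that $\tau:=[X:Z:Y]$ always preserves $C$ and that $\langle\sigma,\tau\rangle\cong\operatorname{D}_{10}$, so $\Aut(C)$ is non-cyclic and Theorem \ref{teoHarui}-(1) is out. Since $10\nmid|\Aut(\mathcal{F}_6)|=216$ and $10\nmid|\Aut(\mathcal{K}_6)|=63$, the curve $C$ can be a descendant of neither the Fermat nor the Klein sextic, eliminating Theorem \ref{teoHarui}-(3),(4). The primitive groups $\operatorname{Hess}_{*}$ with $*\in\{36,72,216\}$ and $\operatorname{PSL}(2,7)$ are likewise excluded, as their orders are not divisible by $10$ (and $\operatorname{PSL}(2,7)$ anyway has elements of order $7>6$). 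Hence only two cases survive: either $\Aut(C)$ fixes a line and a point off it (Theorem \ref{teoHarui}-(2)), or $\Aut(C)$ is $\operatorname{PGL}_3(K)$-conjugate to $\operatorname{A}_5$ or $\operatorname{A}_6$ (Theorem \ref{teoHarui}-(5)).

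For the first case I would invoke Claim \ref{claim1,514}: since $\sigma,\tau\in\Aut(C)$, the fixed line is $X=0$ and the fixed point is $(1:0:0)$, so $N$ is trivial and $\Aut(C)\hookrightarrow\Lambda(\Aut(C))\leq\operatorname{PGL}_2(K)$ with image containing $\langle\Lambda(\sigma),\Lambda(\tau)\rangle\cong\operatorname{D}_{10}$. Running through the finite subgroups of $\operatorname{PGL}_2(K)$, the presence of $\operatorname{D}_{10}$ rules out the cyclic, $\operatorname{A}_4$, and $\operatorname{S}_4$ options; the $\operatorname{A}_5$ option is killed because the required commuting involution would be $\operatorname{diag}(1,\lambda,-\lambda)$, whose action is obstructed by the monomials $XY^5$ and $XZ^5$; and maximality of $\operatorname{ord}(\sigma)=5$ pins the remaining dihedral case to $m=5$. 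Thus this branch gives exactly $\Aut(C)=\langle\sigma,\tau\rangle\cong\operatorname{D}_{10}$, which is alternative (3).

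For the second case the structural key is that every involution of $\operatorname{A}_5$ or $\operatorname{A}_6$ lies in a Klein four-group, so $\Aut(C)$ must contain a second involution $\tau'$ commuting with $\tau$; by \cite[Lemma 2.16]{DoiIdei} this $\tau'=\tau'_\gamma$ has a prescribed shape that forces $\beta_{4,1}=(12-\gamma^5)/\gamma^2$, $\beta_{2,2}=(48+\gamma^5)/\gamma^4$, and $\beta_{0,3}=(64-2\gamma^5)/\gamma^6$. Conjugating by the normalizer element $\operatorname{diag}(2/\gamma,1,1)$ puts $C$ in the symmetric model with $\langle\sigma,\tau,\tau'\rangle\leq\Aut(C)$; since $\operatorname{A}_5$ is the unique subgroup of $\operatorname{A}_6$ properly containing $\operatorname{D}_{10}$, this subgroup is $\operatorname{A}_5$, giving alternative (2). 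To decide when the group is actually $\operatorname{A}_6$, I would use \cite[Theorem 2.1]{DoiIdei}, namely that the only smooth plane sextic carrying an $\operatorname{A}_6$-action is $K$-equivalent to the Wiman curve $\mathcal{W}_6$; exploiting that all copies of $\operatorname{D}_{10}$ inside $\operatorname{A}_6$ are conjugate, I reduce a conjugating $\phi$ to $\operatorname{diag}(\lambda,\zeta_5^j,1)$ or $[\lambda X:\zeta_5^j Z:Y]$ and match the coefficients of $^{\phi}C$ against those of $\mathcal{W}_6$. The main obstacle is precisely this coefficient-matching step: one must show the system $\lambda^6=27\nu,\ \lambda=9\nu,\ \ldots$ is solvable exactly under the stated $\zeta_5$-congruences on $\beta_{4,1},\beta_{2,2},\beta_{0,3}$, and verify that the fivefold freedom in $j$ together with the normalizer action produces no identifications beyond those recorded in alternative (1).
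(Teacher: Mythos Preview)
Your proposal is correct and follows essentially the same route as the paper's proof: the same elimination of Harui's branches via the $\operatorname{D}_{10}$ subgroup and the divisibility obstruction $10\nmid 216,63,36,72,216,168$, the same Claim~\ref{claim1,514} for the point-fixing case, and the same use of \cite[Lemma 2.16 and Theorem 2.1]{DoiIdei} together with the normalizer rescaling $\operatorname{diag}(2/\gamma,1,1)$ for the primitive case. The only cosmetic slip is the parenthetical ``$7>6$'' for $\operatorname{PSL}(2,7)$---here the maximal order is $5$, not $6$---but the divisibility argument already suffices.
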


\subsection{Type $5,(1,2)$}\label{case512}
In this situation, $C$ is defined by an equation of the form:
$$C:X^6+XZ^5+XY^5+\beta_{3,1}X^3YZ^2+\beta_{2,3}X^2Y^3Z+\beta_{0,2}Y^2Z^4=0,$$
where $\sigma:=\operatorname{diag}(1,\zeta_5,\zeta_5^2)$ is an automorphism of maximal order $5$.

Since $5\nmid|\Aut(\mathcal{F}_6)|(=216)$ and $5\nmid|\Aut(\mathcal{K}_6)|(=63)$, then $C$ is neither a descendant of the Fermat curve $\mathcal{F}_6$ nor the Klein curve $\mathcal{K}_6$.

If $\Aut(C)$ is conjugate to one of the finite primitive subgroups of $\PGL_3(K)$, then it should be $\operatorname{A}_5$ or $\operatorname{A}_6$ as these are the only ones that have elements of order $5$. Hence, the normalizer of $\langle\sigma\rangle$ in $\Aut(C)$ is a $\operatorname{D}_5$, so there must be an automorphism $\tau$ for $C$ of order $2$ such that $\tau\sigma\tau\in\langle\sigma\rangle$. It is straightforward to see that such an involution does not exist. That is, $\Aut(C)$ is not one of the primitive subgroups of $\PGL_3(K)$.

Now, assume that $\Aut(C)$ fixes a line $\mathcal{L}$ and a point $P$ not lying on $\mathcal{L}$. Then, $\mathcal{L}:X=0$ and $P=(1:0:0)$ as $\sigma$ is a non-homology in $\Aut(C)$ in its canonical form. Consequently, all automorphisms of $C$ are intransitive of the shape:
  $$
\left(
  \begin{array}{ccc}
    1 & 0 & 0 \\
    0 & \ast & \ast \\
    0 & \ast & \ast \\
  \end{array}
\right).
$$
Following the notations of Theorem \ref{teoHarui}, $N=1$ and the image $\Lambda(\Aut(C))$ of $\Aut(C)$ in $\PGL_2(K)$ contains a $\Z/5\Z$ generated by $\Lambda(\sigma)=[\zeta_5Y:\zeta_5^2Z]$. In particular, $\Lambda(\Aut(C))\neq\operatorname{A}_4$ or $\operatorname{S}_4$. In other words, $\Lambda(\Aut(C))$ is either $\Z/5\Z,\,\operatorname{D}_{10}$ or $\operatorname{A}_{5}$ (recall that, if $\Lambda(\Aut)(C)=\operatorname{D}_{2m}$, then $m=5$ by the maximality of the order of $\sigma$). Now, if $\Lambda(\Aut(C))\neq\Z/5\Z$, then there must be $\Lambda(\tau)$ of order two in the normalizer of $\langle\Lambda(\sigma)\rangle$ as any copy of $\Z/5\Z$ inside $\operatorname{D}_{10}$ (respectively $\operatorname{A}_5$) has normalizer $\operatorname{D}_{10}$. By \cite[Lemma 2.2.3-(a)]{Hugg1}, we have
$\Lambda(\tau)=[\lambda Z:Y]$ for some $\lambda\in K^*$. Equivalently, $\tau=[X:\lambda\mu Z:\mu Y]\in\Aut(C)$ for some $\lambda,\mu\in K^*$. This holds only when $\beta_{3,1}=\beta_{4,3}=\beta_{6,2}=0$, which violates the smoothness of $C$. Therefore, $\Aut(C)$ must be $\Z/5\Z$ in this scenario.

As a consequence of the previous discussion, we obtain:
\begin{prop}\label{512}
Let $C$ be a smooth plane sextic curves $C$ of Type $5, (1,2)$ as above. Then, $\Aut(C)$ is always cyclic of order $5$, generated by $\sigma=\operatorname{diag}(1,\zeta_5,\zeta_5^2)$.
\end{prop}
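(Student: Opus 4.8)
The plan is to feed $C$ into Harui's classification (Theorem \ref{teoHarui}) and eliminate every branch except the cyclic group $\langle\sigma\rangle$. The two descendant branches go first: since $5\nmid|\Aut(\mathcal{F}_6)|=216$ and $5\nmid|\Aut(\mathcal{K}_6)|=63$, the order-$5$ automorphism $\sigma$ forbids $C$ from being a descendant of either the Fermat sextic $\mathcal{F}_6$ or the Klein sextic $\mathcal{K}_6$, which rules out cases (3) and (4).

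Next I would dispatch the primitive case (5). Among the admissible primitive subgroups of $\PGL_3(K)$ only $\operatorname{A}_5$ and $\operatorname{A}_6$ possess elements of order $5$, so a primitive $\Aut(C)$ would be conjugate to one of these, and inside either group the normalizer of $\langle\sigma\rangle$ is a dihedral $\operatorname{D}_{10}$. Hence $C$ would admit an involution $\tau$ with $\tau\sigma\tau^{-1}=\sigma^{-1}$. Because the eigenratios $1:\zeta_5:\zeta_5^2$ of $\sigma$ are pairwise distinct, $\tau$ must permute the three coordinate axes, and inverting $\sigma$ forces $\tau$ to interchange the $X$- and $Z$-axes while fixing the $Y$-axis. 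But such a $\tau$ sends the monomial $XY^5$ to $Y^5Z$, which does not occur in $F$; therefore no such $\tau$ exists and $\Aut(C)$ is not primitive.

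The only surviving branch is (2): $\Aut(C)$ fixes a line $\mathcal{L}$ and a point $P\notin\mathcal{L}$. Since $\sigma=\operatorname{diag}(1,\zeta_5,\zeta_5^2)$ is a non-homology already in canonical form, necessarily $\mathcal{L}:X=0$ and $P=(1:0:0)$, so every automorphism is intransitive of the block shape fixing $X$. In the exact sequence $1\to N\to\Aut(C)\to\Lambda(\Aut(C))\to1$ the kernel $N$ is trivial (again as $\sigma$ is a non-homology), while $\Lambda(\Aut(C))$ contains the $\Z/5\Z$ generated by $\Lambda(\sigma)=[\zeta_5Y:\zeta_5^2Z]$. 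This already excludes $\operatorname{A}_4$ and $\operatorname{S}_4$, and the maximality of $\operatorname{ord}(\sigma)=5$ forces any dihedral image to be exactly $\operatorname{D}_{10}$, so $\Lambda(\Aut(C))\in\{\Z/5\Z,\operatorname{D}_{10},\operatorname{A}_5\}$.

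The crux is to eliminate $\operatorname{D}_{10}$ and $\operatorname{A}_5$, and this is where I expect the real work. In both, every $\Z/5\Z$ has normalizer $\operatorname{D}_{10}$, so there would be an order-two $\Lambda(\tau)$ normalizing $\langle\Lambda(\sigma)\rangle$; by \cite[Lemma 2.2.3-(a)]{Hugg1} it must have the form $\Lambda(\tau)=[\lambda Z:Y]$, lifting to $\tau=[X:\lambda\mu Z:\mu Y]$ for some $\lambda,\mu\in K^*$. I would then impose $\tau\in\Aut(C)$ and track monomials: the three terms $X^3YZ^2$, $X^2Y^3Z$, $Y^2Z^4$ of $F$ are carried to $X^3Y^2Z$, $X^2YZ^3$, $Y^4Z^2$, none of which appears in $F$, so invariance forces $\beta_{3,1}=\beta_{2,3}=\beta_{0,2}=0$. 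But then $F=X(X^5+Y^5+Z^5)$ is reducible, hence singular, contradicting our hypothesis. This leaves $\Lambda(\Aut(C))=\Z/5\Z$, whence $\Aut(C)=\langle\sigma\rangle\simeq\Z/5\Z$. The only delicate point is the monomial bookkeeping in the last step: one must verify that no rescaling $(\lambda,\mu)$ can realign the image monomials with those of $F$ without annihilating all three parameters and thereby destroying smoothness.
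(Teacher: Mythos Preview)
Your argument is correct and mirrors the paper's own proof: eliminate the descendant and primitive branches of Theorem~\ref{teoHarui} by divisibility and by the nonexistence of an involution normalizing $\langle\sigma\rangle$, then in the point-fixing branch use the short exact sequence together with \cite[Lemma~2.2.3-(a)]{Hugg1} to force $\beta_{3,1}=\beta_{2,3}=\beta_{0,2}=0$, contradicting smoothness. You supply somewhat more explicit monomial bookkeeping than the paper (which simply asserts the vanishing), but the strategy is identical.
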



\subsection{Type $4,(1,3)$}\label{case413}
In this situation, $C$ is defined by a smooth plane equation of the form:
$$C:X^6+Y^5Z+YZ^5+\beta_{0,3}Y^3Z^3+\beta_{4,1}X^4YZ+X^2\left(\beta_{2,0}Z^4+\beta_{2,2}Y^2Z^2+\beta_{2,4}Y^4\right)=0,$$
where $\sigma:=\operatorname{diag}(1,\zeta_4,\zeta_4^{-1})$ is an automorphism of maximal order $4$.

- Obviously, $C$ is not a descendant of $\mathcal{K}_6$ as $4\nmid|\Aut(\mathcal{K}_6)|(=63)$. On the other hand, if $C$ is a descendant of the Fermat curve $\mathcal{F}_6$ with a bigger automorphism group than $\langle\sigma\rangle$, then $\Aut(C)=\operatorname{D}_8$ or $\operatorname{S}_4$ since these are the only subgroups of $\Aut(\mathcal{F}_6)$ that contains $\Z/4\Z$ as a subgroup and its elements are of orders $\leq4$. In both situations, $C$ admits an involution $\tau$ such that $\tau\sigma\tau=\sigma^{-1}$. One easy checks that $\tau=[X:\zeta_4^i Z:\zeta_4^{-i}Y]$ for some $i\in\{0,1,2,3\}$, and that $\beta_{2,0}=\beta_{2,4}$ is necessary. So far, $\operatorname{D}_8$ is a subgroup of automorphisms for $C$ generated by $\sigma$ and $\tau=[X:Z:Y]$ provided that $\beta_{2,0}=\beta_{2,4}$. Moreover, if $\Aut(C)$ is exactly an $\operatorname{S}_4$, then the \href{https://people.maths.bris.ac.uk/~matyd/GroupNames/1/S4.html}{Subgroups Lattice of $\operatorname{S}_4$} assures that any $\Z/4\Z$ inside $\Aut(\mathcal{F}_6)$ is $\Aut(\mathcal{F}_6)$-conjugate to $\langle[X:Z:-Y]\rangle$. Therefore, there exists a change of variables $\phi\in\operatorname{PGL}_3(K)$ that transforms $C$ with $\beta_{2,0}=\beta_{2,4}$ to $X^6+Y^6+Z^6+...$ such that $\phi^{-1}\sigma\phi=[X:Z:-Y]$. This yields two possibilities for $\phi$:
$$
\phi_1=\left(
       \begin{array}{ccc}
         1 & 0 & 0 \\
         0 & \zeta_4s & s \\
         0 & \zeta_4^{-1}t & t \\
       \end{array}
     \right)\,\,\,\text{or}\,\,\,\phi_2=\left(
       \begin{array}{ccc}
         s' & 1 & -1 \\
         0 & \zeta_4s & s \\
         0 & \zeta_4^{-1}t & t \\
       \end{array}
     \right).
$$
- For $\phi_1$, to guarantee that $\phi_1^{-1}\tau\phi_1$ is an order $2$ automorphism of $\mathcal{F}_6$, it must be the case that $s=ct$ for some $c$ such that $c^4=1$. Second, to get the core $X^6+Y^6+Z^6$, we should impose $\beta_{0,3}=\dfrac{1-2ct^6}{\pm ct^6}$. In this case, the transformed equation of $C$ becomes
\begin{eqnarray*}
X^6+Y^6+Z^6&+&t^4\left(2 \beta _{2,0}\pm\beta_{2,2}\right)X^2\left(Y^4+Z^4\right)+\left(3-16 c t^6\right) \left(Y^4Z^2+Y^2Z^4\right)\\
&+&ct^2\beta _{4,1}X^4\left(Y^2+Z^2\right)+2t^4 \left(\pm\beta _{2,2}-6 \beta_{2,0}\right)X^2Y^2Z^2=0
\end{eqnarray*}
where $\langle[X:Z:-Y],\,\operatorname{diag}(1,1,-1)\rangle=\operatorname{D}_8$ is a subgroup of automorphisms inside $\Aut(\mathcal{F}_6)$. To reach an $\operatorname{S}_4$, it suffices to guarantee an extra automorphism of order $3$ from $\Aut(\mathcal{F}_6)$ as $\operatorname{D}_8$ is maximal inside $\operatorname{S}_4$. Now, there is no loss of generality to impose such an element to be $[Y:Z:X]$ as any $\operatorname{S}_4$ inside $\Aut(\mathcal{F}_6)$ is conjugated to the one generated by $\operatorname{diag}(1,-1,1),\,\operatorname{diag}(1,1,-1),\,[X:Z:Y],$ and $[Y:Z:X]$.
This holds only if $\beta_{4,1}=\dfrac{3-16 c t^6}{ct^2}$ and $\beta_{2,0}=(\mp/2)\beta_{2,2}+\dfrac{3-16 c t^6}{2t^4}$, equivalently only if $C$ is $K$-isomorphic to
$$
X^6+Y^6+Z^6+\beta_1(X^2Y^4+Y^2Z^4+X^4Z^2+X^2Z^4+X^4Y^2+Y^4Z^2)+\beta_2X^2Y^2Z^2=0.
$$
for some $\beta_1,\beta_2\in K$.

- For $\phi_2$, one sees that
$$
\phi_2^{-1}\tau\phi_2=\left(
       \begin{array}{ccc}
         1 & \dfrac{(1+\zeta_4)(t+s)(t-\zeta_4s)}{2sts'} & -\dfrac{(1+\zeta_4)(s-t)(s+\zeta_4t)}{2sts'} \\
         0 & -\dfrac{s^2+t^2}{2st} & \zeta_4\dfrac{s^2-t^2}{2st} \\
         0 & \zeta_4\dfrac{s^2-t^2}{2st} & \dfrac{s^2+t^2}{2st} \\
       \end{array}
     \right),
$$
in particular, we must impose $s=-\zeta_4t$ to guarantee that $\phi_2^{-1}\tau\phi_2\in\Aut(\mathcal{F}_6)$. Actually, $\phi_2^{-1}\tau\phi_2$ becomes $[X:Z:Y]$. Again to get the core $X^6+Y^6+Z^6$, we should force $\beta_{0,3}=(s'/t)^6\zeta_4+2$. In this case, the transformed equation of $C$ becomes
\begin{eqnarray*}
X^6+Y^6+Z^6&+&(t/s')^4\left(2 \beta _{2,0}-\beta_{2,2}\right)X^2\left(Y^4+Z^4\right)+\left(3+16(t/s')^6\zeta_4\right)\left(Y^4Z^2+Y^2Z^4\right)\\
&-&\zeta_4(t/s')^2\beta _{4,1}X^4\left(Y^2+Z^2\right)-2(t/s')^4 \left(\beta _{2,2}+6 \beta_{2,0}\right)X^2Y^2Z^2=0
\end{eqnarray*}
where $\langle[X:Z:-Y],\,\operatorname{diag}(1,1,-1)\rangle=\operatorname{D}_8$ is a subgroup of automorphisms inside $\Aut(\mathcal{F}_6)$. A similar argument allows us to reach an $\operatorname{S}_4$ by forcing $[Y:Z:X]$ to be an extra automorphism, which holds true only if $\beta _{4,1}=
   -\frac{16 t^4}{s_1^4}+\frac{3 i s_1^2}{t^2}$ and $\beta _{2,0}=\frac{1}{2} \left(\beta _{2,2}+\frac{3
   s_1^4}{t^4}+\frac{16 i t^2}{s_1^2}\right),$ equivalently only if $C$ is $K$-isomorphic to
$$
X^6+Y^6+Z^6+\beta_1(X^2Y^4+Y^2Z^4+X^4Z^2+X^2Z^4+X^4Y^2+Y^4Z^2)+\beta_2X^2Y^2Z^2=0.
$$
for some $\beta_1,\beta_2\in K$ as before.

- Now, suppose that $\Aut(C)$ fixes a line $\mathcal{L}$ and a point $P\notin C$ off this line $\mathcal{L}$ as in Theorem \ref{teoHarui}-(2). Hence $\mathcal{L}:X=0$ and $P=(1:0:0)$ because $\sigma\in\Aut(C)$ is a non-homology in its canonical form. In particular, all automorphisms of $C$ are intransitive of the shape:
$$
\left(
  \begin{array}{ccc}
    1 & 0 & 0 \\
    0 & \ast & \ast \\
    0 & \ast & \ast \\
  \end{array}
\right)
$$
and we can think about $\Aut(C)$ in a short exact sequence $$1\rightarrow N=\langle\sigma^2\rangle=\Z/2\Z\rightarrow \Aut(C)\rightarrow\Lambda(\Aut(C))\rightarrow 1,$$ where $\Lambda(\Aut(C))$ contains a $\Z/2\Z$ generated by $\Lambda(\sigma)=[\zeta_4Y:\zeta_4^{-1}Z]$. Thus, by Theorem \ref{teoHarui}, $\Lambda(\Aut(C))$ is either $\Z/m\Z,\,\operatorname{D}_{2m}$, with $m=2$ or $4,\,$ $\operatorname{A}_4,\,\operatorname{A}_5,\,$ or $\operatorname{S}_4$.
\begin{enumerate}[(i)]
\item First, we exclude the cases $\operatorname{A}_4,\,\operatorname{A}_5\,$ and $\operatorname{S}_4$ as $\Aut(C)$ would be a group of order divisible by $3$ that has a $\Z/2\Z=N$ as a normal subgroup, in particular, it admits an element of order $6>4$, a contradiction.

\item Secondly, if $\Lambda(\Aut(C))=\operatorname{D}_{2m}$ with $m=2$ or $4$, then there is $\Lambda(\tau)\notin\langle\Lambda(\sigma)\rangle$ such that $\Lambda(\tau)$ has order two and commutes with $\Lambda(\sigma)$. By \cite[Lemma 2.2.3-(a)]{Hugg1}, $\Lambda(\tau)=[\lambda Z:Y]$ for some $\lambda\in K^*$. Equivalently, $\tau=[X:\lambda\mu Z:\mu Y]\in\Aut(C)$ for some $\lambda,\mu\in K^*$. This holds only when $\beta_{2,0}=\beta_{2,4}$, moreover, we must have $\lambda\mu^2=\pm1$ for $\tau$ to be order $\leq4$. This reduces $\tau$ to $[X:\pm1/\mu Z:\mu Y]$ with $\mu^4=\pm1$. In particular, $\operatorname{\Aut}(C)$ contains a $\operatorname{D}_8$ generated by $\sigma$ and $[X:Z:Y]$ if and only if $\beta_{2,0}=\beta_{2,4}$. Furthermore, if $\Lambda(\Aut(C))=\operatorname{D}_{8}$, then $\Aut(C)=\operatorname{GAP}(16,11)=\Z/2\Z\times\operatorname{D}_8$ or $\operatorname{GAP}(16,13)=\Z/4\Z\circ\operatorname{D}_8$ because these are the only subgroups of order $16$, which contain a (normal) $\Z/2\Z$, a $\operatorname{D}_8$, and all elements are of order $\leq4$. See \href{https://people.maths.bris.ac.uk/~matyd/GroupNames/1/C16.html}{Groups of order $16$} for more details. In these cases, every copy of $\Z/4\Z$ inside $\Aut(C)$ (for example, $\langle\sigma\rangle$) is contained in a $\Z/2\Z\times\Z/4\Z$. So there exists an involution $\tau'\notin\langle\sigma,\tau\rangle$ that commutes with $\sigma$. An elementary calculation show that $\tau'=\operatorname{diag}(1,-1,1)$ or $\operatorname{diag}(1,1,-1)$. But non of these defines an automorphism for $C$ due to the monomial $X^6$ in the defining equation for $C$.

\item Thirdly, if $\Lambda(\Aut(C))=\Z/4\Z$, then $\Aut(C)$ is one of the following groups: $
\operatorname{Q}_8,\,
\operatorname{D}_8$ or $
\Z/2\Z\times\Z/4\Z$ since these are the only groups of order $8$ that has a normal $\Z/2\Z$ and whose elements have orders $\leq4$. As before, we can see that $\Aut(C)=\operatorname{D}_8$ if and only if $\beta_{2,0}=\beta_{2,4}$, and $\Aut(C)=\Z/2\Z\times\Z/4\Z$ only if $\Aut(C)$ contains $\operatorname{diag}(1,-1,1)$ or $\operatorname{diag}(1,1,-1)$, which is not possible for any specialization of the parameters. If $\Aut(C)$ is the quaternion group $\operatorname{Q}_8$, then it should contain an element $\sigma'$ of order $4$ such that $\sigma'^2=\sigma^2$ and $\sigma'\sigma\sigma'^{-1}=\sigma^{-1}$. This gives us $\sigma'=[X:\lambda Z:-1/\lambda Y]$ for some $\lambda\in K^*$. Furthermore, such a $\sigma'$ lies in $\Aut(C)$ only if $\lambda^4=-1$ (the monomials $Y^5Z$ and $YZ^5$), $\beta_{4,1}=\beta_{0,3}=0$ (the monomials $Y^3Z^3$ and $YZ$), and $\beta_{2,4}=-\beta_{2,0}$ (the monomials $Z^4$ and $Y^4$).
\end{enumerate}
- If $\Aut(C)$ is conjugate to one of the finite primitive subgroups of $\PGL_3(K)$, then it should be $\operatorname{Hess}_{36}$ or $\operatorname{Hess}_{72}$ because these are the only ones that contain elements of order $4$ and no elements of order $>4$.

    Following the notations in Section \ref{case613}, we may assume that $\phi^{-1}\Aut(C)\phi$ contains $\langle S,T,V\rangle\simeq\operatorname{Hess}_{36}$ for some $\phi\in\PGL_3(K)$. Since all the elements of order $2$ inside $\operatorname{Hess}_{36}$ are conjugated, then, we can force $\phi^{-1}\sigma^2\phi=V^2=[X:Z:Y]$, in particular, $$
\phi=\left(
       \begin{array}{ccc}
         0 & 1 & -1 \\
         \lambda & \gamma & \gamma \\
         \mu & \nu & \nu \\
       \end{array}
     \right)
$$
We may further impose that $\phi^{-1}\sigma\phi=V$ or $V^{-1}$ since all groups of order $4$ inside $\operatorname{Hess}_{36}$ are conjugated to $\langle V\rangle$. Thus $\phi$ reduces to $\phi_{\pm}$ below:
$$
\phi_{\pm}=\left(
       \begin{array}{ccc}
         0 & 1 & -1 \\
         a & -a(1\pm\sqrt{3})/2 & -a(1\pm\sqrt{3})/2 \\
         b & -b(1\mp\sqrt{3})/2 & -b(1\mp\sqrt{3})/2 \\
       \end{array}
     \right).$$
The monomials $X^5Y,\,X^5Z,\,X^4Y^2,\,X^4Z^2,\,X^3Y^2Z,\,X^3YZ^2,\,X^2Y^4,\,X^2Z^4,$ $X^2YZ^3$, $YZ^5,$ and $Z^5Y$ should not appear in the defining equation for $^{\phi_{\pm}}C$ as $S\in\Aut(^{\phi_{\pm}}C)$. Therefore, relative to $\phi_{\pm}$, we should have

\begin{eqnarray*}
  \beta_{0,3}&=&\pm\dfrac{(2\sqrt{3}\mp3)b^4-(2\sqrt{3}\pm3)a^4}{3a^2b^2},\,\beta_{2,0}=\dfrac{- a\left(9(12\pm7\sqrt{3})a^4\pm\sqrt{3}b^4\right)}{8b^3},\\
  \beta_{2,4}&=&\dfrac{-b\left(9(12\mp7\sqrt{3})b^4\mp\sqrt{3}a^4\right)}{8a^3},\,\beta_{2,2}=\dfrac{\mp15\left((\sqrt{3}\mp2)b^4-(\sqrt{3}\pm2)a^4\right)}{4ab},\\
\beta_{4,1}&=&\pm\frac{3}{2}((2\sqrt{3}\pm3)a^4-(2\sqrt{3}\mp3)b^4)
\end{eqnarray*}
That is, $^{\phi_{\pm}}C$ is defined by
\begin{eqnarray*}
  ^{\phi_{+}}C: X^6+Y^6+Z^6&+&f_4(a,b)XYZ(X^3+Y^3+Z^3)+3f_4(a,b)X^2Y^2Z^2\\
  &-&2(f_4(a,b)+5)(X^3Y^3+X^3Z^3+Y^3Z^3)=0
   \end{eqnarray*}
with
$f_4(a,b)=\pm\dfrac{12\left((\sqrt{3}\pm2)a^4+(\sqrt{3}\mp2)b^4\right)}{b^4-a^4}$ such that
\begin{eqnarray*}
  3ab\left((\sqrt{3}\pm2)a^4-(\sqrt{3}\mp2)b^4\right)&=&8  \\
3ab\left((2\sqrt{3}\pm3)a^4+(2\sqrt{3}\mp3)b^4\right)&=&8\sqrt{3}.
\end{eqnarray*}

Lastly, $UVU^{-1}\in\Aut(^{\phi_{\pm}}C)$ if and only if $f_4(a,b)=0$ or $-6$. However, we know by Proposition \ref{3.3}-(2) that $\Aut(^{\phi_{\pm}}C)=\langle S,T,U,V\rangle\simeq\operatorname{Hess}_{216}$ when $f_4(a,b)=0$, so we discard this situation. On the other hand, when $f_4(a,b)=-6$, the curve $^{\phi_{\pm}}C$ becomes singular at the point $(1:-1:0)$. We thus conclude that $\Aut(C)$ is not conjugate to $\operatorname{Hess}_{72}$.

As a consequence of the previous discussion:
\begin{prop}\label{413}
Let $C$ be a smooth plane sextic curves $C$ of type $4, (1,3)$ as above. Then, $\Aut(C)$ is classified as follows.
\begin{enumerate}[1.]
  \item If $\beta_{2,0}=\beta_{2,4}$, then $\Aut(C)=\langle\sigma,\tau\rangle\simeq\operatorname{D}_8$, where $\sigma=\operatorname{diag}(1,\zeta_4,\zeta_4^{-1})$ and $\tau=[X:Z:Y]$, except when $\beta_{0,3}=\dfrac{1-2ct^6}{\pm ct^6},\,$ $\beta_{4,1}=\dfrac{3-16 c t^6}{ct^2},$ and $\beta_{2,0}=(\mp/2)\beta_{2,2}+\dfrac{3-16 c t^6}{2t^4}$ for some $t\in K^*$ such that $c^4=1$ or when
$\beta_{0,3}=(s'/t)^6\zeta_4+2,\,$  $\beta _{4,1}=-(2t/s')^4+3\zeta_4(s'/t)^2,$ and $\beta _{2,0}=\frac{1}{2} \left(\beta _{2,2}+3(s'/t)^4+\zeta_4(4t/s')^2\right)$ for some $s',t\in K^*$ In these cases, then $C$ is $K$-isomorphic to
$$
X^6+Y^6+Z^6+\alpha_{2,4}(X^2Y^4+Y^2Z^4+X^4Z^2+X^2Z^4+X^4Y^2+Y^4Z^2)+\alpha_{2,2}X^2Y^2Z^2=0.
$$
for some $\alpha_{2,4},\alpha_{2,2}\in K^*$ such that $\Aut(C)$ is conjugate to $\operatorname{S}_4$ generated by $\operatorname{diag}(1,-1,1),\,$ $\operatorname{diag}(1,1,-1),\,$ $[X:Z:Y]$ and $[Y:Z:X]$.

   \item If $\beta_{2,4}=-\beta_{2,0}$ and $\beta_{4,1}=\beta_{0,3}=0$, then $\Aut(C)=\langle\sigma,\sigma'\rangle\simeq\operatorname{Q}_8$, where $\sigma'=[X:\zeta_8Z:-\zeta_8^{-1}Y]$.
   \item   If $\beta_{0,3}=\pm\dfrac{(2\sqrt{3}\mp3)b^4-(2\sqrt{3}\pm3)a^4}{3a^2b^2},\,\beta_{2,0}=\dfrac{- a\left(9(12\pm7\sqrt{3})a^4\pm\sqrt{3}b^4\right)}{8b^3},$
  $\beta_{2,4}=\dfrac{-b\left(9(12\mp7\sqrt{3})b^4\mp\sqrt{3}a^4\right)}{8a^3},\,\beta_{2,2}=\dfrac{\mp15\left((\sqrt{3}\mp2)b^4-(\sqrt{3}\pm2)a^4\right)}{4ab},$
$\beta_{4,1}=\pm\frac{3}{2}((2\sqrt{3}\pm3)a^4-(2\sqrt{3}\mp3)b^4)$ such that
\begin{eqnarray*}
  3ab\left((\sqrt{3}\pm2)a^4-(\sqrt{3}\mp2)b^4\right)&=&8  \\
3ab\left((2\sqrt{3}\pm3)a^4+(2\sqrt{3}\mp3)b^4\right)&=&8\sqrt{3},
\end{eqnarray*}
then $C$ is $K$-isomorphic to
\begin{eqnarray*}
  ^{\phi_{+}}C: X^6+Y^6+Z^6&+&f_4(a,b)XYZ(X^3+Y^3+Z^3)+3f_4(a,b)X^2Y^2Z^2\\
  &-&2(f_4(a,b)+5)(X^3Y^3+X^3Z^3+Y^3Z^3)=0
   \end{eqnarray*}
with $f_4(a,b)=\pm\dfrac{12\left((\sqrt{3}\pm2)a^4+(\sqrt{3}\mp2)b^4\right)}{b^4-a^4}$. In this case, $\Aut(C)$ is conjugate to $\operatorname{Hess}_{36}=\langle S, T, V\rangle$.
  \item Otherwise, $\Aut(C)=\Z/4\Z$ generated by $\sigma$.
\end{enumerate}

\end{prop}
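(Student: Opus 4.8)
The plan is to apply Harui's classification (Theorem~\ref{teoHarui}) to $\operatorname{Aut}(C)$, exploiting that the maximal-order element $\sigma=\operatorname{diag}(1,\zeta_4,\zeta_4^{-1})$ forces every automorphism to have order at most $4$. First I would discard the branches that cannot host such a group: since $4\nmid|\operatorname{Aut}(\mathcal{K}_6)|=63$, the curve is not a descendant of the Klein sextic, and among the primitive groups of Theorem~\ref{teoHarui}-(5) only $\operatorname{Hess}_{36}$ and $\operatorname{Hess}_{72}$ avoid elements of order $>4$ (the others have elements of order $5$, $6$ or $7$). This leaves three live branches: descendant of the Fermat sextic $\mathcal{F}_6$; the intransitive ($\operatorname{PBD}$) branch fixing a line and an off-line point; and conjugacy to $\operatorname{Hess}_{36}$ or $\operatorname{Hess}_{72}$.

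For the Fermat branch I would use that the only subgroups of $\operatorname{Aut}(\mathcal{F}_6)\cong(\Z/6\Z)^2\rtimes\operatorname{S}_3$ containing a $\Z/4\Z$ with exponent $\le 4$ are $\operatorname{D}_8$ and $\operatorname{S}_4$. In both there is an involution $\tau$ inverting $\sigma$; solving $\tau\sigma\tau=\sigma^{-1}$ forces $\tau=[X:\zeta_4^iZ:\zeta_4^{-i}Y]$ and hence $\beta_{2,0}=\beta_{2,4}$. To reach the full $\operatorname{S}_4$ I would conjugate $C$ into Fermat normal form by a matrix $\phi$ realizing $\phi^{-1}\sigma\phi=[X:Z:-Y]$ and impose an extra order-three element $[Y:Z:X]$; the resulting linear conditions on $\beta_{4,1},\beta_{2,0},\beta_{2,2}$ pin down the fully symmetric family. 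For the intransitive branch, the axis must be $X=0$ with center $(1:0:0)$ because $\sigma$ is a non-homology in canonical form, yielding $1\to N=\langle\sigma^2\rangle\cong\Z/2\Z\to\operatorname{Aut}(C)\to\Lambda(\operatorname{Aut}(C))\to1$ with $\Lambda(\sigma)$ of order $2$. Since a normal $\Z/2\Z$ is central, any image divisible by $3$ ($\operatorname{A}_4,\operatorname{A}_5,\operatorname{S}_4$) would produce an element of order $6$, which is excluded; analyzing the surviving images $\Z/m\Z$ and $\operatorname{D}_{2m}$ by hand gives the base case $\Z/4\Z$, the dihedral case $\operatorname{D}_8$ (exactly when $\beta_{2,0}=\beta_{2,4}$), and the new phenomenon $\operatorname{Q}_8$, which arises precisely when a second order-four element $\sigma'=[X:\zeta_8Z:-\zeta_8^{-1}Y]$ with $\sigma'^2=\sigma^2$ and $\sigma'\sigma\sigma'^{-1}=\sigma^{-1}$ lies in $\operatorname{Aut}(C)$; substituting $\sigma'$ into $C$ forces $\beta_{4,1}=\beta_{0,3}=0$ and $\beta_{2,4}=-\beta_{2,0}$.

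The hard part is the Hessian branch. I would conjugate $\operatorname{Aut}(C)$ so that it contains the standard $\operatorname{Hess}_{36}=\langle S,T,V\rangle$, using that all involutions and all copies of $\Z/4\Z$ in $\operatorname{Hess}_{36}$ are conjugate, to reduce the conjugating matrix to an explicit two-parameter shape $\phi_{\pm}$ with $\phi_{\pm}^{-1}\sigma^2\phi_{\pm}=V^2$ and $\phi_{\pm}^{-1}\sigma\phi_{\pm}=V^{\pm1}$. Invariance under $S$ then annihilates a long list of monomials and expresses all five coefficients $\beta_{i,j}$ rationally in $(a,b)$ subject to two algebraic relations; this symbolic computation is the main burden and is best confirmed by computer algebra. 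The decisive and subtle endpoint is excluding $\operatorname{Hess}_{72}$: the extra generator $UVU^{-1}$ lies in $\operatorname{Aut}({}^{\phi_\pm}C)$ only if $f_4(a,b)\in\{0,-6\}$, but $f_4=0$ already gives $\operatorname{Hess}_{216}$ by Proposition~\ref{3.3}-(2) (excluded, since it has order-six elements), while $f_4=-6$ makes ${}^{\phi_\pm}C$ singular at $(1:-1:0)$; hence only $\operatorname{Hess}_{36}$ survives. Collecting the outcomes of the four branches yields the stated classification.
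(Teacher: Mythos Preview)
Your proposal is correct and follows essentially the same route as the paper: the same elimination of the Klein and most primitive branches, the same Fermat analysis via an involution inverting $\sigma$ and a conjugation sending $\sigma$ to $[X:Z:-Y]$, the same short exact sequence argument in the intransitive branch (including the centrality of $N=\langle\sigma^2\rangle$ to rule out images of order divisible by $3$, and the explicit search for $\sigma'$ giving $\operatorname{Q}_8$), and the same Hessian reduction to $\phi_{\pm}$ with the $f_4\in\{0,-6\}$ dichotomy to exclude $\operatorname{Hess}_{72}$. The only cosmetic point is that the reference for the $f_4=0$ case should be Proposition~\ref{3.3}-(1) rather than (2), since that is where the $\operatorname{Hess}_{216}$ curve is identified.
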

\subsection{Type $3,(1,2)$}\label{case312}
In this situation $C$ is defined over $K$ by a smooth plane equation of one of the following forms:
  \begin{eqnarray*}
  \mathcal{C}_1&:&X^6+Y^6+Z^6+XYZ\left(\beta_{4,1}X^3+\beta_{1,4}Y^3+\beta_{1,2}Z^3\right)+\beta_{2,2}X^2Y^2Z^2\\
  &+&\beta_{3,3}X^3Y^3+\beta_{3,0}X^3Z^3+\beta_{0,3}Y^3Z^3=0\\
  \mathcal{C}_2&:&X^5Y+Y^5Z+XZ^5+XYZ\left(\beta_{3,2}X^2Y+\beta_{1,3}Y^2Z+\beta_{2,1}XZ^2\right)\\
  &+&\beta_{2,4}X^2Y^4+\beta_{0,2}Y^2Z^4+\beta_{4,0}X^4Z^2=0,
  \end{eqnarray*}
where $\sigma=\operatorname{diag}(1,\zeta_3,\zeta_3^{-1})$ is an automorphism of maximal order $3$.

The automorphism group of $\mathcal{C}_i$ was investigated in \cite[Theorem 2.4]{BadrBarspreprint}. As a consequence, we have:
\begin{prop}\label{312}
Let $C$ be a smooth plane sextic curves $C$ of Type $3, (1,2)$ as above. Then, $\Aut(C)$ is classified as follows:
\begin{enumerate}[(1)]
  \item The automorphism group $\Aut(\mathcal{C}_1)$ is always cyclic generated by $\sigma$ except when we are in one of the situations below.
\begin{enumerate}[(i)]
\item If $\beta_{4,1}=\beta_{1,4}=\beta_{1,2}=\beta_{2,2}=0$, then $\mathcal{C}_1$ reduces to
          $$
       X^6+Y^6+Z^6+X^3\left(\beta_{3,3}Y^3+\beta_{3,0}Z^3\right)+\beta_{0,3}Y^3Z^3=0,
      $$
where $\Aut(\mathcal{C}_1)$ is $(\Z/3\Z)^2$ generated by $\operatorname{diag}(1,\zeta_3,1)$ and $\operatorname{diag}(1,1,\zeta_3)$.
\item If one of the following conditions occurs
\begin{eqnarray*}
&\textbf{(a)}& \beta_{4,1}=\pm\beta_{1,4}\,\, \text{and}\,\,\beta_{3,0}=\pm\beta_{0,3},\\
&\textbf{(b)}& \beta_{1,4}=\pm\beta_{1,2}\,\, \text{and}\,\,\beta_{3,3}=\pm\beta_{3,0},\\
&\textbf{(c)}&\beta_{4,1}=\pm\beta_{1,2}\,\, \text{and}\,\,\beta_{3,3}=\pm\beta_{0,3},
\end{eqnarray*}
then $\mathcal{C}_1$ is $K$-isomorphic to
    \begin{eqnarray*}
  \mathcal{C}'_1&:&X^6+Y^6+Z^6+\beta'_{4,1}X^4YZ+\beta'_{3,3}X^3(Y^3+Z^3)+\beta'_{2,2}X^2Y^2Z^2\\
  &+& \beta'_{1,2}XYZ(Y^3+Z^3)+\beta'_{0,3}Y^3Z^3=0,
\end{eqnarray*}
where $\Aut(\mathcal{C}'_1)$ is $\operatorname{S}_3$  generated by $\sigma$ and $[X:Z:Y]$ if $\beta'_{4,1}\neq\beta'_{1,2}$ or $\beta'_{3,3}\neq\beta'_{0,3}$, and $\Aut(C')$ is $\Z/3\Z\rtimes\operatorname{S}_3$ generated by $\sigma,\,$ $[X:Z:Y]$ and $[Y:Z:X]$ otherwise.
\begin{rem}
We note that $\left(\beta'_{3,3},\beta'_{1,2}\right)\neq(0,0)$ or $\operatorname{diag}(1,\zeta_6,\zeta_6^{-1})$ will be an automorphism of order $6>3$.
\end{rem}
\item If \textbf{(a)} $(\beta_{4,1},\beta_{1,2},\beta_{1,4}),\,$ $(\beta_{1,4},\beta_{4,1},\beta_{1,2})$ or $(\beta_{1,2},\beta_{1,4},\beta_{4,1})$ equals
    $$
    \left(\dfrac{2\left(29-54\lambda^6-54\mu^6\right)}{27\lambda\mu},\,\dfrac{2\left(27\mu^6-54\lambda^6-52\right)}{27\lambda\mu ^4},\,\dfrac{2\left(27\lambda^6-54\mu ^6-52\right)}{27\lambda^4\mu}\right),
    $$
\textbf{(b)} $(\beta_{3,0},\beta_{3,3},\beta_{0,3}),\,$ $(\beta_{3,3},\beta_{0,3},\beta_{3,0})$ or $(\beta_{0,3},\beta_{3,0},\beta_{3,3})$ equals
$$
\left(\dfrac{2\left(81\lambda ^6-27\mu^6-26\right)}{27\mu ^3},\,\dfrac{2\left(81\mu^6-27\lambda^6-26\right)}{27\lambda^3},\,\dfrac{2\left(82-27\lambda^6-27\mu ^6\right)}{27\lambda^3\mu^3}\right),
$$
and \textbf{(c)} $\beta_{2,2}=\dfrac{9\lambda^6+9\mu^6+10}{3\lambda^2\mu^2}$ for some $\lambda,\mu\in K^*$, then $\mathcal{C}_1$ is $K$-isomorphic to
\begin{eqnarray*}
  \mathcal{C}_{1,\lambda,\mu}: X^6+Y^6+Z^6&+&f_1(\lambda,\mu)X^2Y^2Z^2+f_2(\lambda,\mu)(X^4Y^2+X^2Z^4+Y^4Z^2)\\
&+&f_2(\mu,\lambda)(X^4Z^2+X^2Y^4+Y^2Z^4)=0,
  \end{eqnarray*}
where
\begin{eqnarray*}
f_1(\lambda,\mu)&:=&3(80+81\lambda^6+81\mu^6),\\
f_2(\lambda,\mu)&:=&81\left(1+\zeta_3\lambda^6+\zeta_3^{-1}\mu^6\right).
\end{eqnarray*}
In this case, $\Aut(\mathcal{C}_{1,\lambda,\mu})$ is $\operatorname{A}_{4}$ generated by $\operatorname{diag}(1,-1,1)$ and $\operatorname{diag}(1,1,-1)$ and $[Y:Z:X]$.
\end{enumerate}

\item The automorphism group $\Aut(\mathcal{C}_2)$ is always cyclic generated by $\sigma$ except when we are in one of the situations below.
      \begin{enumerate}[(i)]
\item If $\beta_{0,2}=\zeta_{21}^{-12r}\beta_{4,0},\,$ $\beta_{2,4}=\zeta_{21}^{3r}\beta_{4,0},\,$ $\beta_{1,3}=\zeta_{21}^{-6r}\beta_{3,2},\,$ $\beta_{2,1}=\zeta_{21}^{3r}\beta_{3,2},$ then $\mathcal{C}_2$ is $K$-isomorphic to
    \begin{eqnarray*}
  \mathcal{C}'_2&:&X^5Y+Y^5Z+XZ^5+\beta_{4,0}\zeta_{21}^{4r}\left(X^4Z^2+X^2Y^4+Y^2Z^4\right)\\
  &+& \beta_{3,2}\zeta_{21}^{-r}XYZ\left(X^2Y+XZ^2+Y^2Z\right)=0,
\end{eqnarray*}
where $\Aut(\mathcal{C}'_2)$ is $(\Z/3\Z)^2$ generated by $\sigma$ and $[Y:Z:X]$.
\begin{rem}
We note that $\left(\beta_{2,4},\beta_{1,3}\right)\neq(0,0)$ or $\operatorname{diag}(1,\zeta_{21},\zeta_{21}^{-4})$ will be an automorphism of order $21>3$.
\end{rem}
\item If \textbf{(a)} $(\beta_{2,4},\beta_{4,0},\beta_{0,2}),\,$ $(\beta_{0,2},\beta_{2,4},\beta_{4,0})$ or $(\beta_{4,0},\beta_{0,2},\beta_{2,4})$ equals
    $$
    \left(\dfrac{\lambda^5\mu+4\mu^5}{2\lambda^4},\,\dfrac{\lambda+4\lambda^5\mu}{2\mu^2},\,\dfrac{4\lambda+\mu^5}{2\lambda^2\mu^4}\right)
    $$
and \textbf{(b)} $(\beta_{1,3},\beta_{3,2},\beta_{2,1}),\,$ $(\beta_{2,1},\beta_{1,3},\beta_{3,2})$ or $(\beta_{3,2},\beta_{2,1},\beta_{1,3})$ equals
$$
\left(\dfrac{2\left(2\lambda^5\mu+2\lambda+\mu^5\right)}{\lambda^3\mu^2},\,\dfrac{2\lambda^5\mu+4\lambda+4\mu^5}{\lambda^2\mu},\,\frac{2\left(2\lambda^5\mu+\lambda+2\mu^5\right)}{\lambda\mu^3}\right),
$$
then $\mathcal{C}_2$ is $K$-isomorphic to
\begin{eqnarray*}
  \mathcal{C}_{2,\lambda,\mu}: X^6+Y^6+Z^6&+&g_1(\lambda,\mu)(\zeta_3^{-1}X^4Y^2+X^2Z^4+Y^4Z^2)\\
  &+&g_2(\lambda,\mu)(X^4Z^2+\zeta_3X^2Y^4+Y^2Z^4)=0,
  \end{eqnarray*}
where
\begin{eqnarray*}
g_1(\lambda,\mu)&:=&\dfrac{\sqrt{3}\zeta_9\left(\zeta_4\lambda^5\mu+\zeta_{12}\lambda+\zeta_{12}^5\mu^5\right)}{\lambda^5\mu+\lambda+\mu^5},\\
g_2(\lambda,\mu)&:=&\dfrac{\sqrt{3}\zeta_{18}\left(\zeta_{12}^5\lambda^5\mu+\zeta_{12}\lambda+\zeta_4\mu^5\right)}{\lambda^5\mu+\lambda+\mu^5}.
\end{eqnarray*}
In this case, $\Aut(\mathcal{C}_{2,\lambda,\mu})$ is $\operatorname{A}_{4}$ generated by $\operatorname{diag}(1,-1,1)$ and $\operatorname{diag}(1,1,-1)$ and $[\zeta_6^{-1}Y:Z:X]$.
\end{enumerate}

\end{enumerate}
\end{prop}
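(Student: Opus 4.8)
The plan is to deduce the statement directly from the general classification in \cite[Theorem 2.4]{BadrBarspreprint}, specialized to degree $d=6$. First I would recall from Proposition \ref{maximalordercyclic} that, up to $K$-isomorphism, a smooth plane sextic of Type $3,(1,2)$ is defined either by $\mathcal{C}_1$, whose core is the Fermat sextic $X^6+Y^6+Z^6$, or by $\mathcal{C}_2$, whose core is the Klein sextic $X^5Y+Y^5Z+XZ^5$; in both cases $\sigma=\operatorname{diag}(1,\zeta_3,\zeta_3^{-1})$ is an automorphism of maximal order $3$, so every element of $\Aut(C)$ has order at most $3$. This exponent bound is the organizing principle: it immediately removes all the primitive groups of Theorem \ref{teoHarui}-(5), since each of $\operatorname{PSL}(2,7),\operatorname{A}_5,\operatorname{A}_6,\operatorname{Hess}_{36},\operatorname{Hess}_{72},\operatorname{Hess}_{216}$ contains an element of order $\geq4$, and it likewise forbids $C$ from being $K$-isomorphic to $\mathcal{F}_6$ or $\mathcal{K}_6$ themselves, which carry elements of orders $6$ and $7$.

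For $\mathcal{C}_1$ the core is the Fermat sextic, so the pair $(C,\Aut(C))$ is a descendant of $\mathcal{F}_6$ and, by Theorem \ref{teoHarui}-(3), $\Aut(C)$ conjugates into $\Aut(\mathcal{F}_6)\cong(\Z/6\Z)^2\rtimes\operatorname{S}_3$. Keeping $\sigma$ fixed as the non-homology $\operatorname{diag}(1,\zeta_3,\zeta_3^{-1})$, I would enumerate the supergroups of $\langle\sigma\rangle$ inside $\Aut(\mathcal{F}_6)$ all of whose elements have order $\leq3$, and translate each into a pattern on the coefficients $\beta_{i,j}$. Vanishing of the coefficients attached to the factor $XYZ$, namely $\beta_{4,1}=\beta_{1,4}=\beta_{1,2}=\beta_{2,2}=0$, leaves only diagonal symmetries and gives $(\Z/3\Z)^2$, which is case (1)(i). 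Invariance under a transposition of two variables produces the symmetry patterns (a)--(c), forcing $\operatorname{S}_3$, refined by the equalities $\beta'_{4,1}=\beta'_{1,2}$ and $\beta'_{3,3}=\beta'_{0,3}$ to $\Z/3\Z\rtimes\operatorname{S}_3$, which is case (1)(ii). The Klein--four-normalized configuration, realized by the explicit coefficient formulas $f_1,f_2$, is exactly the locus on which $\operatorname{diag}(1,-1,1)$, $\operatorname{diag}(1,1,-1)$ and $[Y:Z:X]$ become automorphisms, yielding $\operatorname{A}_4$, which is case (1)(iii). Outside these loci $\Aut(C)=\langle\sigma\rangle$ is cyclic.

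For $\mathcal{C}_2$ the core is the Klein sextic, so $(C,\Aut(C))$ is a descendant of $\mathcal{K}_6$ and $\Aut(C)$ conjugates into $\Aut(\mathcal{K}_6)\cong\operatorname{GAP}(63,3)$; the exponent bound forces $\Aut(C)$ to be a $3$-subgroup, hence $\Z/3\Z$ or $(\Z/3\Z)^2$. The balanced configuration of case (2)(i), where the quadruples of coefficients are $\zeta_{21}$-aligned, is precisely where the extra order-$3$ rotation $[Y:Z:X]$ preserves $\mathcal{C}'_2$, giving $(\Z/3\Z)^2$. Case (2)(ii) is different in kind: the constraints on $(\beta_{2,4},\beta_{4,0},\beta_{0,2})$ and $(\beta_{1,3},\beta_{3,2},\beta_{2,1})$ make $\mathcal{C}_2$ $K$-isomorphic, via an explicit linear change of variables, to the curve $\mathcal{C}_{2,\lambda,\mu}$ whose core is now the Fermat sextic; once in this Fermat-core model the first-case analysis applies and delivers $\operatorname{A}_4$, generated by $\operatorname{diag}(1,-1,1)$, $\operatorname{diag}(1,1,-1)$ and $[\zeta_6^{-1}Y:Z:X]$. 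Here $\operatorname{A}_4$ has order $12\nmid 63$, which is exactly why it cannot be detected inside $\Aut(\mathcal{K}_6)$ and only surfaces after this core switch.

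The step I expect to be the main obstacle is verifying the explicit coefficient dictionaries: confirming that the stated formulas for $f_1,f_2$ for $\mathcal{C}_1$, together with the formulas for $g_1,g_2$ and the change of variables turning $\mathcal{C}_2$ into $\mathcal{C}_{2,\lambda,\mu}$, are precisely the $d=6$ specializations of the parametrized families of \cite[Theorem 2.4]{BadrBarspreprint}, and that the listed loci are mutually exclusive and exhaust the parameter space. This is a finite but intricate bookkeeping of coefficient identities; once it is in place, each stratum is read off the cited theorem and the proposition follows.
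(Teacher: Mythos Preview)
Your top-level plan—invoke \cite[Theorem~2.4]{BadrBarspreprint} and specialize to $d=6$—is exactly what the paper does; its entire proof of this proposition is that one citation.

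Your supplementary sketch, however, contains a genuine gap. You argue that because $\mathcal{C}_1$ (resp.\ $\mathcal{C}_2$) is written with Fermat (resp.\ Klein) core, the pair $(C,\Aut(C))$ is a descendant of $\mathcal{F}_6$ (resp.\ $\mathcal{K}_6$), and hence $\Aut(C)$ embeds in $\Aut(\mathcal{F}_6)$ (resp.\ $\Aut(\mathcal{K}_6)$). That inference is invalid: in Theorem~\ref{teoHarui} the descendant conclusion is one branch of a five-way dichotomy, reached only \emph{after} the other branches are excluded; it is not a consequence of the core of a chosen coordinate model. The definition of descendant requires both the core condition and that the group conjugate into $\Aut(C_0)$, and the second does not follow from the first. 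Your own case~(2)(ii) witnesses the failure: there $\Aut(\mathcal{C}_2)\cong\operatorname{A}_4$ with $12\nmid 63=|\Aut(\mathcal{K}_6)|$, so $(\mathcal{C}_2,\operatorname{A}_4)$ is \emph{not} a descendant of $\mathcal{K}_6$ despite the Klein core—after the change of variables it is a descendant of $\mathcal{F}_6$. You notice the contradiction and speak of a ``core switch'', but that patch does not restore the logic: without first running through all branches of Theorem~\ref{teoHarui} for each of $\mathcal{C}_1,\mathcal{C}_2$ (eliminating branch~(5) via the exponent bound, branches~(1) and~(2) via the geometry of the fixed points of $\sigma$, and then treating~(3) and~(4) on their own terms), you have no a~priori guarantee that the groups you list exhaust the possibilities. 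If the sketch is only meant as intuition behind the cited theorem this is harmless, but as an independent argument it is incomplete.
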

%

\begin{rem}
  The irreducibility of the stratum $\mathcal{M}^{\operatorname{Pl}}_{10}(\Z/3\Z)$, in particular, the existence of the two components $\widetilde{\mathcal{M}^{\operatorname{Pl}}_{10}}(\rho_1(\Z/3\Z))$ and $\widetilde{\mathcal{M}^{\operatorname{Pl}}_{10}}(\rho_2(\Z/3\Z))$ were first studied by Badr-Bars in \cite{MR3554328}.
\end{rem}


\subsection{Type $2,(0,1)$}\label{case301}
In this $C$ is defined by an equation of the form:
$$C:Z^6+Z^4L_{2,Z}+Z^2L_{4,Z}+L_{6,Z}=0$$
where $\sigma:=\operatorname{diag}(1,1,-1)$ is an automorphism of maximal order $2$. We remark that $L_{2,Z}\neq0$ or $L_{4,Z}\neq0$ because $\operatorname{diag}(1,1,\zeta_4)$ will be an automorphism of order $4>2$ otherwise. Also, by smoothness, $L_{6,Z}$ is of degree $\geq5$ in both $X$ and $Y$.

The full automorphism group of such a $C$ was investigated in \cite{BadrBarspreprint}. As a consequence of \cite[Theorem 2.1]{BadrBarspreprint}, we have:
\begin{prop}\label{201}
Let $C$ be a smooth plane sextic curves $C$ of Type $2, (0,1)$ as above. Then, $\Aut(C)$ is always cyclic of order $2$ unless $L_{2,Z}, L_{4,Z}$ and $L_{6,Z}$ are elements of the ring $K[X^2,Y^2]$. In this situation, $\Aut(C)$ equals $(\Z/2\Z)^2$ generated by $\sigma$ and $\operatorname{diag}(1,-1,1)$.
\end{prop}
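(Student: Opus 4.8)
The plan is to exploit that $\sigma=\operatorname{diag}(1,1,-1)$ has maximal order $2$, so every element of $\Aut(C)$ has order dividing $2$. A group of exponent $2$ is automatically elementary abelian, hence $\Aut(C)\cong(\Z/2\Z)^k$ for some $k$. Since $\operatorname{PGL}_3(K)$ has $2$-rank $2$ — a third commuting involution would have to centralize a rank-$2$ subgroup, whose centralizer is either the diagonal torus or the centralizer of a coordinate swap, and in each case contains no further involution modulo scalars — we obtain $\Aut(C)\cong\Z/2\Z$ or $(\Z/2\Z)^2$. The task then splits into deciding exactly when the second case occurs and pinning down the extra involution.

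First I would use that $\Aut(C)$ is abelian: every $\tau\in\Aut(C)$ commutes with $\sigma$, so $\tau$ lies in the centralizer of $\sigma$, which preserves the two eigenspaces of $\sigma$ and hence has the block shape $\operatorname{diag}(A,c)$ with $A\in\operatorname{GL}_2(K)$ acting on $(X,Y)$ and $c\in K^*$ acting on $Z$. In particular every automorphism fixes the center $(0:0:1)$ and the axis $Z=0$ of $\sigma$; this places us in the intransitive case of Theorem \ref{teoHarui}-(2) with $B=Z$, where $N=\langle\sigma\rangle$ and $G'=\Lambda(\Aut(C))\leq\operatorname{PGL}_2(K)$. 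Writing out that $\operatorname{diag}(A,c)$ preserves the defining polynomial and comparing the coefficients of $Z^6,Z^4,Z^2,Z^0$ yields the three homogeneous eigenvalue conditions
$$
L_{2,Z}(A\cdot)=c^{2}L_{2,Z},\qquad L_{4,Z}(A\cdot)=c^{4}L_{4,Z},\qquad L_{6,Z}(A\cdot)=c^{6}L_{6,Z}.
$$

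Next I would classify the admissible $\tau=\operatorname{diag}(A,c)$. Normalizing a lift so that $\tau^2=\mathrm{id}$ forces $A^2=I_2$ and $c^2=1$, whence $c=\pm1$ and $A$ is either $\pm I_2$ or a reflection with eigenvalues $\{1,-1\}$. If $A$ is scalar then $\tau\in\langle\sigma\rangle$ contributes nothing new, so $G'$ is trivial and $\Aut(C)=\langle\sigma\rangle\cong\Z/2\Z$; an order-$4$ element (which would instead force $c^2=-1$) is excluded precisely by the maximality of the order of $\sigma$. If $A$ is a reflection, diagonalizing it as $\operatorname{diag}(1,-1)$ turns the conditions above, using $c^{2i}=1$, into $L_{2i,Z}(X,-Y)=L_{2i,Z}(X,Y)$; being homogeneous of even degree and even in $Y$, each $L_{2i,Z}$ then lies in $K[X^2,Y^2]$, and $\tau$ equals $\operatorname{diag}(1,-1,1)$ up to composition with $\sigma$. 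Conversely, if the three forms lie in $K[X^2,Y^2]$ then $\operatorname{diag}(1,-1,1)$ is visibly an automorphism, giving $\Aut(C)=\langle\sigma,\operatorname{diag}(1,-1,1)\rangle\cong(\Z/2\Z)^2$.

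The main obstacle is the coordinate bookkeeping in the reflection case: a priori the two fixed points of $\Lambda(\tau)$ on the axis $\{Z=0\}$ are arbitrary, so the conclusion ``$L_{2i,Z}\in K[X^2,Y^2]$'' holds only after a linear change of the variables $X,Y$ carrying those fixed points to $(1:0)$ and $(0:1)$. I would check that such a change respects the normal form of Type $2,(0,1)$ — it keeps $\sigma$ of maximal order $2$ and the equation even in $Z$ — so that the assertion is the intended one for the geometrically complete family up to $K$-isomorphism. This uniform normalization across the family is exactly what \cite[Theorem 2.1]{BadrBarspreprint} supplies, and I would invoke it to close the argument cleanly.
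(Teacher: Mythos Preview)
Your argument is correct and more explicit than what the paper does: the paper simply records the statement as a direct consequence of \cite[Theorem~2.1]{BadrBarspreprint} and gives no further argument, whereas you supply a self-contained proof. The core of your approach --- exponent $2$ forces $\Aut(C)$ to be elementary abelian, hence contained in the centraliser of $\sigma$, which you identify with $\GL_2(K)$ via $A\mapsto[\operatorname{diag}(A,1)]$; then any exponent-$2$ subgroup of $\GL_2(K)$ is simultaneously diagonalisable into $\{\pm1\}^2$, giving $|\Aut(C)|\le 4$ --- is clean and does the job without appealing to Harui's trichotomy at all. The $2$-rank remark in your opening paragraph is a preview rather than a proof, but the centraliser argument that follows makes it rigorous.

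Two minor comments. First, your closing appeal to \cite[Theorem~2.1]{BadrBarspreprint} for the coordinate normalisation is not really needed: the linear change of $(X,Y)$ that diagonalises the reflection $A$ visibly preserves the Type $2,(0,1)$ shape $Z^6+Z^4L_{2,Z}+Z^2L_{4,Z}+L_{6,Z}$ and keeps $\sigma=\operatorname{diag}(1,1,-1)$ fixed, so the conclusion that $L_{2i,Z}\in K[X^2,Y^2]$ holds for a $K$-isomorphic model is immediate. Second, the parenthetical about an order-$4$ element forcing $c^2=-1$ is slightly misplaced --- with your normalisation $c=1$ the obstruction is that $A$ would have to satisfy $A^2=-I_2$, not that $c^2=-1$ --- but the conclusion (no such element, by maximality of $|\sigma|$) is of course correct.
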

%
\section{Appendix}

\subsection{$\operatorname{S}_3$ and $\operatorname{A}_4$ inside $\Aut(\mathcal{F}_6)$}\label{appA}
We have seen in Proposition \ref{1217} that $\Aut(\mathcal{F}_6)$ is isomorphic to $(\Z/6\Z)^2\times\operatorname{S}_3$.

- Consider the following subgroups of $\Aut(\mathcal{F}_6)$.
\begin{eqnarray*}
  \operatorname{S}_{3,1} &:=& \langle\operatorname{diag}(1,\zeta_3,\zeta_3^{-1}),\,[X:Z:Y]\rangle \\
  \operatorname{S}_{3,2} &:=& \langle[Y:Z:X],\,[X:Z:Y]\rangle
\end{eqnarray*}
Clearly each of them is isomorphic to
$$
\operatorname{S}_3=\langle a,b:a^2=b^3=(ab)^2=1\rangle
$$
with $a=[X:Z:Y]$ and $a=\operatorname{diag}(1,\zeta_3,\zeta_3^{-1}),\,[Y:Z:X]$ respectively. It is straightforward to see that  $\langle\operatorname{diag}(1,\zeta_3,\zeta_3^{-1})\rangle$ and $\langle[Y:Z:X]\rangle$ are not conjugated inside $\Aut(\mathcal{F}_6)$. Thus we conclude that any copy of $\operatorname{S}_3$ inside $\Aut(\mathcal{F}_6)$ is $\Aut(\mathcal{F}_6)$-conjugate to $\operatorname{S}_{3,1}$ or $\operatorname{S}_{3,2}$ since $\Aut(\mathcal{F}_6)$ contains exactly two conjugacy classes of $\operatorname{S}_3$s as illustrated by  \href{https://people.maths.bris.ac.uk/~matyd/GroupNames/193/C6%5E2sS3.html}{subgroups lattice of $\Aut(\mathcal{F}_6)$}.

- Similarly, any copy of $\operatorname{A}_4$ inside $\Aut(\mathcal{F}_6)$ is $\Aut(\mathcal{F}_6)$-conjugate to $\operatorname{A}_{4,1}$ or $\operatorname{A}_{4,2}$, where
\begin{eqnarray*}
  \operatorname{A}_{4,1} &:=& \langle [Y:Z:X],\,\operatorname{diag}(1,1,-1),\,\operatorname{diag}(1,-1,1)\rangle, \\
\operatorname{A}_{4,2} &:=& \langle [Y:\zeta_6Z:\zeta_6X],\,\operatorname{diag}(1,1,-1),\,\,\operatorname{diag}(1,-1,1)\rangle.
\end{eqnarray*}
Indeed, each of them is isomorphic to
$$\operatorname{A}_4=\langle a,b,c:a^2=b^2=c^3=1,\,cac^{-1}=ab=ba,cbc^{-1}=a\rangle$$
with $a=\operatorname{diag}(1,-1,1),\,b=\operatorname{diag}(1,1,-1)$ and $c=[\nu Y:Z:X]$ such that $\nu=1,\zeta_6^{-1}$ respectively. Moreover, if $\phi^{-1}\operatorname{A}_{4,2}\phi=\operatorname{A}_{4,1}$ for some $\phi\in\Aut(\mathcal{F}_6)$, then $\phi$ must be in the normalizer of $\langle\operatorname{diag}(1,1,-1),\operatorname{diag}(1,-1,1)\rangle\simeq\Z/2\Z\times\Z/2\Z$. Hence $\phi$ reduces to one of the forms:
$$
\phi_1=[aX:bY:Z],\,\phi_2=[aY:bX:Z],\,\phi_3=[aX:Z:bY],
$$
$$
\phi_4=[aY:Z:bX],\,\phi_5=[Z:aX:bY],\,\phi_6=[aZ:Y:bX],
$$
for some $a,b$ such that $a^6=b^6=1$. This yields the $24$ $\Aut(\mathcal{F}_{6})$-conjugates of $[Y:Z:X]$ namely, $\{\phi_i[Y:Z:X]\phi_i^{-1}:i=1,2,3,4,5,6\}$ More precisely, the $\Aut(\mathcal{F}_{6})$-conjugates of $[Y:Z:X]$ are given by the following sets:
$$\{[Y:\zeta_6^{\ell}Z:\zeta_6^{\ell'}X]\}_{(\ell,\ell')\in\{(0,0),(0,3),(3,0),(1,2),(2,1),(2,4),(4,2),(3,3),(4,5),(5,4),(5,1)\}}$$ $$\{[Z:\zeta_6^{\ell}X:\zeta_6^{\ell'}Y]\}_{(\ell,\ell')\in\{(0,0),(0,3),(3,0),(1,1),(1,4),(4,1),(2,2),(2,5),(5,2),(3,3),(4,4),(5,5)\}}$$
Because non of these conjugates lies in $\operatorname{A}_{4,2}$, we deduce that $\operatorname{A}_{4,1}$ and $\operatorname{A}_{4,2}$ are non-conjugated inside $\Aut(\mathcal{F}_6)$. However, it is worthy to mention that both groups are $\PGL_3(K)$ conjugated via a re-scaling of the variables in the normalizer of $\Aut(\mathcal{F}_6)$, that is, via $X\to \lambda' X,\,Y\to \mu' Y,\,Z\to Z$ with $\dfrac{\mu'^2}{\lambda'}=\dfrac{\mu'}{\lambda'^2}=\zeta_6$. 
\subsection{The Wiman's sextic curve}\label{appB}
The most symmetric smooth plane sextic curve is known to be the Wiman's sextic $\mathcal{W}_6$ defined by
$$
\mathcal{W}_6:27X^6+9XZ^5+9XY^5-135X^4YZ-45X^2Y^{2}Z^{2}+10Y^3Z^3=0.
$$
We refer the reader to \cite{DoiIdei} for more details. In particular, by \cite[Appendix A]{Yoshida1}, we know that the automorphism group of the Wiman sextic curve is $R^{-1}\,\langle T_1,\,T_2,\,T_3,\,T_4\rangle\,R$, where
\begin{eqnarray*}
  R &:=& \left(
           \begin{array}{ccc}
             a(1-b) & a(1-b) & \dfrac{1}{2}(1+2c+bc) \\
             a\sqrt{b-3} & -a\sqrt{b-3} & 0 \\
             a & a & \dfrac{-1}{2}(1+c+b)+bc \\
           \end{array}
         \right)
   \\
  T_1 &=& \operatorname{diag}(1,-1,1),
   \\
  T_2 &=& [Z:X:Y]\\
  T_3 &=& [X:\zeta_3^{-1} Z:-\zeta_3 Y]\\
  T_4 &=&\left(
            \begin{array}{ccc}
              1 & 1/b & -b \\
              1/b & b & 1 \\
              b & -1 & 1/b \\
            \end{array}
          \right) \\
\end{eqnarray*}
with $a:=\dfrac{1}{6}(1+2c+2b+bc),\,b:=\dfrac{1+\sqrt{5}}{2},\,c:=\dfrac{-1+\sqrt{-3}}{2}$.

\end{document}